\documentclass[12pt]{amsart}

\usepackage{amsfonts,amsmath,amsthm,amssymb,latexsym,amsthm,newlfont,enumerate,color,tikz-cd}

\oddsidemargin 1cm
\evensidemargin 1cm
\textwidth 15cm

\newif\ifslide

\theoremstyle{plain}
\newtheorem{theorem}{Theorem}[section]

\newtheorem{lemma}[theorem]{Lemma}

\newtheorem{proposition}[theorem]{Proposition}
\newtheorem{definition-lemma}[theorem]{Definition-Lemma}

\newtheorem{red-question}[theorem]{\textcolor{red}{Question}}

\theoremstyle{definition}
\newtheorem{definition}[theorem]{Definition}
\newtheorem{remark}[theorem]{Remark}

\newtheorem{example}[theorem]{Example}

\def\ideal#1.{I_{#1}}
\def\ring#1.{\mathcal {O}_{#1}}



\def\fring#1.{\hat{\mathcal {O}}_{#1}}
\def\proj#1.{\mathbb {P}(#1)}
\def\pr #1.{\mathbb {P}^{#1}}
\def\dpr #1.{\hat{\mathbb {P}}^{#1}}
\def\af #1.{\mathbb A^{#1}}
\def\Hz #1.{\mathbb F_{#1}}
\def\Hbz #1.{\overline{\mathbb F}_{#1}}
\def\fb#1.{\underset #1 {\times}}
\def\rest#1.{\underset {\ \ring #1.} \to \otimes}
\def\au#1.{\operatorname {Aut}\,(#1)}
\def\deg#1.{\operatorname {deg } (#1)}

\def\pic#1.{\operatorname {Pic}\,(#1)}
\def\pico#1.{\operatorname{Pic}^0(#1)}
\def\picg#1.{\operatorname {Pic}^G(#1)}
\def\ner#1.{NS (#1)}
\def\rdown#1.{\llcorner#1\lrcorner}
\def\rfdown#1.{\lfloor{#1}\rfloor}
\def\rup#1.{\ulcorner{#1}\urcorner}
\def\rcup#1.{\lceil{#1}\rceil}

\def\n1#1.{\operatorname {N_1}(#1)}  
\def\cn1#1.{\overline{\operatorname {N^1}(#1)}} 
\def\cone#1.{\operatorname {NE}(#1)}     
\def\ccone#1.{\overline{\operatorname {NE}}(#1)}
\def\none#1.{\operatorname {NF}(#1)}
\def\cnone#1.{\overline{\operatorname {NF}}(#1)}
\def\mone#1.{\operatorname {NM}(#1)} 
\def\cmone#1.{\overline{\operatorname {NM}}(#1)}

\def\coef#1.{\frac{(#1-1)}{#1}}
\def\vit#1.{D_{\langle #1 \rangle}}
\def\mm#1.{\overline {M}_{0,#1}}
\def\H1#1.{H^1(#1,{\ring #1.})}
\def\ac#1.{\overline {\mathbb F}_{#1}}

\def\adj#1.{\frac {#1-1}{#1}}
\def\spn#1.{\overline{#1}}
\def\pek#1.#2.{\Cal P^{#1}(#2)}
\def\plk#1.#2.{\Cal P^{\leq #1}(#2)}
\def\ev#1.{\operatorname{ev_{#1}}}
\def\ilist#1.{{#1}_1,{#1}_2,\dots}
\def\bminv#1.{(\nu_1,s_1;\nu_2,s_2;\dots ;\nu_{#1},s_{#1};\nu_{r+1})}
\def\zinv#1.{(\nu_1,s_1;\nu_2,s_2;\dots ;\nu_{#1},s_{#1};0)}
\def\iinv#1.{(\nu_1,s_1;\nu_2,s_2;\dots ;\nu_{#1},s_{#1};\infty)}

\def\scr #1.{\mathcal #1}


\def\llist#1.#2.{{#1}_1,{#1}_2,\dots,{#1}_{#2}}
\def\ulist#1.#2.{{#1}^1,{#1}^2,\dots,{#1}^{#2}}
\def\lomitlist#1.#2.{{#1}_1,{#1}_2,\dots,\hat {{#1}_i}, \dots, {#1}_{#2}}
\def\lomitlistz#1.#2.{{#1}_0,{#1}_1,\dots,\hat {{#1}_i}, \dots, {#1}_{#2}}
\def\loc#1.#2.{\Cal O_{#1,#2}}
\def\fderiv#1.#2.{\frac {\partial #1}{\partial #2}}
\def\deriv#1.#2.{\frac {d #1}{d #2}}

\def\map#1.#2.{#1 \longrightarrow #2}
\def\rmap#1.#2.{#1 \dasharrow #2}
\def\emb#1.#2.{#1 \hookrightarrow #2}
\def\non#1.#2.{\text {Spec }#1[\epsilon]/(\epsilon)^{#2}}
\def\Hi#1.#2.{\text {Hilb}^{#1}(#2)}
\def\sym#1.#2.{\operatorname {Sym}^{#1}(#2)}
\def\Hb#1.#2.{\text {Hilb}_{#1}(#2)}
\def\Hm#1.#2.{\Hom_{#1}(#2)}
\def\prd#1.#2.{{#1}_1\cdot {#1}_2\cdots {#1}_{#2}}
\def\Bl #1.#2.{\operatorname {Bl}_{#1}#2}
\def\pl #1.#2.{#1^{\otimes #2}}
\def\mgn#1.#2.{\overline {M}_{#1,#2}}
\def\ialist#1.#2.{{#1}_1 #2 {#1}_2, #2\dots}
\def\pair#1.#2.{\langle #1, #2\rangle}
\def\vandermonde#1.#2.{\left|
\begin{matrix}
1 & 1 & 1 & \dots & 1\\
{#1}_1 & {#1}_2 & {#1}_3 & \dots & {#1}_{#2}\\
{#1}_1^2 & {#1}_2^2 & {#1}_3^2 & \dots & {#1}_{#2}^2\\
\vdots & \vdots & \vdots & \ddots & \vdots\\
{#1}_1^{#2-1} & {#1}_2^{#2-1} & {#1}_2^{#2-1} & \dots & {#1}_{#2}^{#2-1}\\
\end{matrix}
\right|
}
\def\vandermondet#1.#2.{\left|
\begin{matrix}
1 & {#1}_1   & {#1}_1^2 & \dots & {#1}_1^{#2-1}\\
1 & {#1}_2   & {#1}_2^2 & \dots & {#1}_2^{#2-1}\\
1 & {#1}_3   & {#1}_3^2 & \dots & {#1}_3^{#2-1}\\
\vdots & \vdots & \vdots & \ddots & \vdots\\
1 & {#1}_{#2}& {#1}_{#2}^2 & \dots & {#1}_{#2}^{#2-1}\\
\end{matrix}
\right|
}
\def\gr#1.#2.{\mathbb{G}(#1,#2)}


\def\alist#1.#2.#3.{{#1}_1 #2 {#1}_2 #2\dots #2 {#1}_{#3}}
\def\zlist#1.#2.#3.{#1_0 #2 #1_1 #2\dots #2 #1_{#3}}
\def\lomitlist30#1.#2.#3.{{#1}_0,{#1}_1 #2 \dots #2\hat {{#1}_i} #2\dots #2 {#1}_{#3}}
\def\lmap#1.#2.#3.{#1 \overset{#2}{\longrightarrow} #3}
\def\mes#1.#2.#3.{#1 \longrightarrow #2 \longrightarrow #3}
\def\ses#1.#2.#3.{0\longrightarrow #1 \longrightarrow #2 \longrightarrow #3 \longrightarrow 0}
\def\les#1.#2.#3.{0\longrightarrow #1 \longrightarrow #2 \longrightarrow #3}
\def\res#1.#2.#3.{#1 \longrightarrow #2 \longrightarrow #3\longrightarrow 0}
\def\Hi#1.#2.#3.{\text {Hilb}^{#1}_{#2}(#3)}
\def\ten#1.#2.#3.{#1\underset {#2}{\otimes} #3}
\def\lomitlist30#1.#2.#3.{{#1}_0 #2 {#1}_1 #2 \dots #2 \hat {{#1}_i} #2 \dots #2 {#1}_{#3}}
\def\mderiv#1.#2.#3.{\frac {d^{#3} #1}{d #2^{#3}}}


\def\Hom{\operatorname{Hom}}

\def\Supp{\operatorname{Supp}}

\def\Exc{\operatorname{Exc}}

\def\dim{\operatorname{dim}}

\def\deg{\operatorname{deg}}


\def\det{\operatorname{det}}

\def\Sing{\operatorname{Sing}}

\def\rest{\operatorname{res}}


 


\def\e{\Cal E}

\def\e1{E_1}
\def\e2{E_2}

\usepackage{hyperref,amscd,mathtools}
\newcommand{\nlc}[1]{\operatorname{nlc} \, #1}

\title{Positivity  of the moduli part}

\subjclass[2010]{14E30, 37F75}

\author{Florin Ambro}
\address{Institute of Mathematics Simion Stoilow of the Romanian Academy\\
 P.O. BOX 1-764\\
  RO014700 Bucharest, Romania}
  \email{florin.ambro@imar.ro}

\author{Paolo Cascini}
\address{Department of Mathematics\\
Imperial College London\\
180 Queen's Gate\\
London SW7 2AZ, UK}
\email{p.cascini@imperial.ac.uk}

\author{Vyacheslav Shokurov}
\address{Department of Mathematics, The Johns Hopkins University, Baltimore, MD 21218, USA}
\address{
Steklov Mathematics Institute, Russian Academy of Sciences, Cubkina Str. 8, 119991,
Moscow, Russia
}
\email{shokurov@math.jhu.edu}

\author{Calum Spicer}
\address{Department of Mathematics, King's College London, Strand,
London WC2R 2LS, UK}
\email{calum.spicer@kcl.ac.uk}

\setcounter{tocdepth}{1}

\begin{document}
\maketitle
\begin{abstract}We prove the Cone Theorem for algebraically integrable foliations. As a consequence, we show that termination of flips implies the $b$-nefness
of the moduli part of a log canonical pair with  respect to a contraction, generalising the case of lc trivial fibrations. 
\end{abstract}
\tableofcontents
\section{Introduction}

Starting from the work of Kodaira for elliptic surfaces, the canonical bundle formula has played an important role in birational geometry. Indeed, given any fibration between  normal complex varieties and with trivial relative canonical divisor,  this formula allows one to 
 compare the canonical divisor of  the domain with the canonical divisor  of the target. More specifically, 
let $(X,B)$ be a log canonical pair and let $f\colon X\to Z$ be a contraction between normal varieties over $\mathbb C$, i.e. a proper surjective morphism with connected fibres. 
If $f$ is lc trivial, i.e. if $K_X+B\sim_{\mathbb R} f^*D$ for some $\mathbb R$-Cartier $\mathbb R$-divisor $D$ on $Y$, then we may write $D=K_Z+B_Z+M_Z$, where $B_Z$ is the discriminant of the pair $(X/Z,B)$  and $M_Z$ is its moduli part. Roughly speaking, $B_Z$ measures the singularities of the fibres of $f$, whilst $M_Z$ measures how far  the fibration is from being isotrivial.
In particular, it is known that, after possibly replacing the fibration $f$ by taking a base change through a birational morphism $\alpha \colon Z'\to Z$,  the moduli part $M_Z$ of $(X/Z,B)$ is nef and for any further birational base change the moduli part of the induced pair coincides with the pull back of $M_Z$ (e.g. see \cite{Kawamata98,Ambro04,Kollar07,PS09}).

 Building on the work of the third author (cf. \cite{Shokurov13, Shokurov20}),
the aim of this paper is to generalise the above results to fibrations which are not necessarily lc trivial. 
Indeed, given a log canonical pair $(X,B)$ and a contraction $f\colon X\to Z$ between normal varieties, we can easily generalise the notion of discriminant and moduli part of the pair $(X/Z,B)$ (cf. Section \ref{s_moduli}). In contrast to the case of lc trivial fibrations,  
the moduli part of $(X/Z,B)$ is no longer a divisor on $Z$ but a divisor on $X$. On the other hand, as in the lc trivial case, 
if $B$ has rational coefficients, then  after possibly replacing the contraction $f\colon X\to Z$ by taking a base change through a birational morphism, 
it follows that the moduli part of $(X/Z,B)$ is compatible with pull-backs 
(cf. Theorem \ref{t_BPP}).
Moreover, if $K_X+B$ is relatively pseudo-effective over $Z$, and assuming termination of log canonical flips, we can run a Minimal Model Program $\phi\colon X\dashrightarrow Y$ of $K_X+B$ over $Z$ so that $K_Y+\phi_*B$ is relatively nef. Under suitable conditions, we prove that this implies the nefness of the moduli part of $(Y/Z,\phi_*B)$ (cf. Theorem \ref{thm_main}). As a consequence we have:

\begin{theorem}\label{t_main1}
Assume termination of log canonical flips in dimension $n$. 

Let $(X,B)$ be a  log canonical pair of dimension $n$ and let $f\colon X\to Z$ be a projective contraction  such that $K_X+B$ is relatively nef and $B\ge 0$. 

Then there exist a  log canonical pair $(Y,C)$ and a  commutative diagram
\[
\begin{tikzcd}
  Y \arrow[r, "\beta", dashrightarrow] \arrow[d, "f'"'] & X \arrow[d, "f"] \\
  Z' \arrow[r, "\alpha"] &  Z
 \end{tikzcd}
\]
where $\alpha\colon Z'\to Z$ is a projective birational morphism and $\beta\colon Y\dashrightarrow X$ is a birational map such that 
\begin{enumerate}
\item $(X,B)$ and $(Y,C)$ are crepant over the generic point of $Z$ (cf. Definition \ref{d_crepant});
\item  the moduli part $M_{Y}$ of $(Y/Z',C)$ is nef;
\item $(Y/Z',C)$ is BP stable and log stable (cf. Definition \ref{d_bpstable} and Definition \ref{d_logstable}); and
\item $(Y/Z',C)$ is maximal (cf. Definition \ref{d_maximal}). 
\end{enumerate}
\end{theorem}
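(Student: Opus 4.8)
The plan is to combine the Minimal Model Program referenced in the introduction with the structural results on the moduli part, running an induction that forces all four properties to hold simultaneously. First I would invoke termination of log canonical flips in dimension $n$ together with the relative pseudo-effectivity hypothesis implicit in $K_X+B$ being relatively nef: running an MMP for $K_X+B$ over $Z$ gives a birational contraction $\phi\colon X\dashrightarrow Y_0$ over $Z$ such that $K_{Y_0}+\phi_*B$ is relatively nef, but since $K_X+B$ is already relatively nef this step is essentially trivial and $Y_0=X$ works up to crepant modification; the real content is arranging the moduli part to behave well. So the actual starting point is to pass to a sufficiently high log resolution and base change $\alpha\colon Z'\to Z$ so that, after possibly a further birational modification of $X$, the hypotheses of Theorem \ref{t_BPP} apply and the moduli part becomes compatible with pull-backs — this requires reducing first to the case where $B$ has rational coefficients (or treating the real case by perturbation and taking limits of the rational data, a standard but delicate maneuver).

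Next, having a model where the moduli part is $b$-compatible, I would appeal to Theorem \ref{thm_main}: the running of the MMP $\phi\colon X\dashrightarrow Y$ of $K_X+B$ over $Z$, under the standing assumption of termination, produces a model on which $K_Y+\phi_*B$ is relatively nef, and the theorem asserts that \emph{under suitable conditions} — which I read as precisely the BP stability, log stability, and maximality conditions in items (3) and (4) — the moduli part $M_Y$ of $(Y/Z',\phi_*B)$ is nef. Thus the strategy is not to prove nefness directly but to first \emph{achieve} those stability conditions by a finite sequence of further birational modifications: one shows (using the definitions referenced, Definitions \ref{d_bpstable}, \ref{d_logstable}, \ref{d_maximal}) that BP stability and log stability can always be attained after a single additional base change and crepant blow-up, because they are open/ascending conditions that stabilize by Noetherianity, and maximality is attained by taking the "largest" such model in the relevant partial order. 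The crepancy in item (1) is preserved throughout because every modification we perform is crepant over the generic point of $Z$ by construction — we only ever blow up or base change, never changing the pair over the generic fibre.

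The main obstacle, and where I expect the bulk of the technical work to live, is reconciling the MMP step with the stability conditions: running the MMP can destroy BP stability or log stability that was arranged beforehand, so one cannot simply do "first stabilize, then run MMP." The correct order is the reverse — run the MMP \emph{first} to get relative nefness of $K_Y+\phi_*C$, \emph{then} stabilize — but one must check that the moduli part's $b$-compatibility (Theorem \ref{t_BPP}) survives the MMP, which is the crux: flips and divisorial contractions over $Z$ must be shown not to disturb the pull-back compatibility of $M$, and this presumably follows from the fact that the MMP steps are isomorphisms over the generic point of $Z$ combined with the negativity lemma. A secondary obstacle is the passage from rational to real coefficients in $B$; here I would argue that the conclusion is a closed condition on the coefficient vector, reduce to rational coefficients where Theorem \ref{t_BPP} applies directly, and extend by a density/compactness argument in the space of log canonical boundaries, though some care is needed to ensure the models $Y$ and $Z'$ can be chosen uniformly in a neighborhood of the given $B$.
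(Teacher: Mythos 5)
Your proposal misses the central mechanism of the paper's argument, which is the foliated viewpoint, and relies instead on tools the paper deliberately does not use. Here are the concrete gaps.

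First, you propose to invoke Theorem \ref{t_BPP} to achieve compatibility of the moduli part with pull-backs, and then worry at length about the passage from rational to real coefficients that this would require. But the paper explicitly states that Theorem \ref{t_BPP} is not used anywhere else in the paper. The actual proof works directly with $\mathbb{R}$-coefficients and never needs the rational/real perturbation argument you sketch, because the boundary property is obtained through a completely different route: one passes to a Property $(*)$ model via Proposition \ref{prop_*resolution} (a toroidal/weak semi-stable reduction, not a stabilization of the moduli $b$-divisor), and then BP stability is a \emph{consequence} of log canonicity of the induced foliated pair $(\mathcal{F},\Delta)$, via Theorem \ref{t_*BP} and Proposition \ref{p_canonical}. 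Your proposed argument that BP stability and log stability ``can always be attained after a single additional base change and crepant blow-up, because they are open/ascending conditions that stabilize by Noetherianity'' is not how the paper establishes these; there is no Noetherian ascending-chain argument in sight, and it is far from clear that one could be made to work in the absence of Theorem \ref{t_BPP}.

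Second, you describe the MMP step as ``essentially trivial'' because $K_X+B$ is already relatively nef, with $Y_0 = X$ up to crepant modification. This is not what happens. After passing to the Property $(*)$ resolution $(X'/Z',\overline B)$, the new boundary $\overline B$ is \emph{strictly larger} than the crepant pull-back $B'$ (with extra vertical components needed to realize Property $(*)$), so $K_{X'}+\overline B$ need not be relatively nef. Running the $(K_{X'}+\overline B)$-MMP over $Z'$ — which requires the termination hypothesis — is genuinely nontrivial, and Proposition \ref{prop_MMP_preserves_*} is what guarantees that Property $(*)$ and equidimensionality survive the MMP. Finally, the nefness of the moduli part in item (2) is obtained via Theorem \ref{thm_main}, whose proof hinges on the Cone Theorem for algebraically integrable foliations (Theorem \ref{thm_cone}) and the identification $M_X \sim K_{\mathcal F}+\Delta$ from Proposition \ref{prop_*comparison}; your proposal treats this as a black box asserted ``under suitable conditions'' without identifying that the foliated Cone Theorem is the engine, and without it there is no way to produce the vertical rational curve that would contradict nefness.
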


We remark that in contrast to the lc trivial situation, it is not the case in general that the moduli
part $M_{X'}$ can be taken to be semi-ample (cf. Section \ref{s_counterexample_semi_ample}).

\medskip

The main ingredient of the proof of the above Theorem is the study of the birational geometry of algebraically integrable foliations, i.e. foliations induced by a dominant map $X\dashrightarrow Z$. Indeed, given a contraction $f\colon X\to Z$, by taking a weakly semi-stable reduction (cf. Theorem \ref{t_AK}) and by adding a 
vertical $\mathbb R$-divisor to $B$, we may assume that $f$ is toroidal and equidimensional and that the moduli part of $(X/Z,B)$ coincides with 
$K_{\cal F}+\Delta$ where $K_{\cal F}$ is the canonical divisor of the induced foliation 
$\cal F$ and $\Delta$ is the horizontal part of $B$ (cf. Proposition \ref{prop_*comparison}). 
Moreover, this property is preserved after running a relative Minimal Model Program over $Z$ (cf. Proposition \ref{prop_MMP_preserves_*}). 
It now remains to show that if the  moduli part of the induced pair is not nef, then there exists a rational curve which is vertical with respect to the fibration. 
To this end, we prove a stronger result: the Cone Theorem (cf. Theorem \ref{thm_cone}) holds for algebraically integrable foliations (see also \cite{bm16,Spicer20,CS21} for other versions of the Cone Theorem for foliations). 
This implies that if $(\cal F,\Delta)$ is a foliated log pair (cf. Section \ref{s_lcf}) 
then any $(K_{\cal F}+\Delta)$-negative extremal ray is spanned by a rational curve which is contained in a fibre of the induced morphism. 

Elsewhere in the literature results like Theorem \ref{t_main1} are proven by analysing the semi-positivity properties
of $f_*\omega_{X/Y}$, which in turn is based on the study of variations of Hodge structures.  By using foliations, 
our proofs avoid such techniques.

\medskip

We also provide several other applications of Theorem \ref{thm_cone}. 
For instance, we prove a version of Theorem \ref{t_main1} where ``nef" is replaced by  ``pseudo-effective",
(cf.  Theorem \ref{thm_relatively_pseudo_eff}).
An important consequence of the main results of \cite{CP19} is that if a projective manifold admits a foliation $\cal F$
where $K_{\cal F}$ is not pseudo-effective, then $K_X$ is not pseudo-effective \cite[Theorem 1.2]{CP19}.
We  prove the existence of the $K_{\cal F}$-MMP under some natural and mild conditions which allows 
us to prove a version of \cite[Theorem 1.2]{CP19} on singular varieties.

\begin{theorem}
\label{t_main3}
Let $X$ be a klt projective 
variety and let $(\cal F,\Delta)$ be a foliated pair such that $K_{\cal F}+\Delta$ is not pseudo-effective
and such that $(X, \Delta)$ is log canonical.

Then $K_X+\Delta$ is not pseudo-effective.  Moreover, $X$ is covered by $(K_X+\Delta)$-negative curves
tangent to $\mathcal F$.
\end{theorem}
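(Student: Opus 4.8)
The plan is to deduce the statement from the Cone Theorem for algebraically integrable foliations (Theorem \ref{thm_cone}), together with the existence of the $K_{\cal F}$-MMP in the relevant setting. First I would observe that, since $K_{\cal F}+\Delta$ is not pseudo-effective and $(X,\Delta)$ is log canonical with $X$ klt, we are in a position to run a $(K_{\cal F}+\Delta)$-MMP (using the results on existence of the MMP for algebraically integrable foliations established earlier, or after reducing to the toroidal equidimensional model via a weakly semi-stable reduction as in Theorem \ref{t_AK} and Proposition \ref{prop_*comparison}). Because $K_{\cal F}+\Delta$ is not pseudo-effective, this MMP must terminate with a Mori fibre space structure $g\colon X'\to W$ for the foliation, i.e. an extremal contraction of relative Picard number one with $\dim W<\dim X'$ and $-(K_{\cal F'}+\Delta')$ relatively ample, where $(\cal F',\Delta')$ is the transform of $(\cal F,\Delta)$.

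Next I would use the Cone Theorem to describe the fibres of $g$: a general fibre $F$ of $g$ is covered by $(K_{\cal F'}+\Delta')$-negative rational curves that are \emph{tangent} to $\cal F'$, since by Theorem \ref{thm_cone} every $(K_{\cal F'}+\Delta')$-negative extremal ray is spanned by a rational curve contained in a leaf (equivalently, in a fibre of the algebraically integrable structure), and such a curve is automatically tangent to the foliation. Transporting these curves back through the (finitely many) steps of the MMP — which are isomorphisms in codimension one — and through the birational reduction, I obtain a covering family of rational curves $C$ on $X$ (or on a suitable birational model, then pushed forward) that are tangent to $\cal F$ and satisfy $(K_{\cal F}+\Delta)\cdot C<0$.

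The final step is the numerical comparison between $K_{\cal F}+\Delta$ and $K_X+\Delta$ along these curves. For a curve $C$ tangent to $\cal F$, one has an inclusion of the foliation tangent sheaf into the tangent sheaf of $X$ along $C$, which yields the inequality $(K_X+\Delta)\cdot C\le (K_{\cal F}+\Delta)\cdot C$ (intuitively, $T_{\cal F}\subseteq T_X$ forces $\deg(\det T_X|_C)\ge \deg(\det T_{\cal F}|_C)$ on the tangent curve, and the $\Delta$ contributions match); here one must be careful to work on a model where everything is well-defined and to use that $C$ is not contained in the singular locus. Combining with $(K_{\cal F}+\Delta)\cdot C<0$ gives $(K_X+\Delta)\cdot C<0$ for a covering family, hence $K_X+\Delta$ is not pseudo-effective, and $X$ is covered by $(K_X+\Delta)$-negative curves tangent to $\cal F$, as claimed. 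The main obstacle is the first step: ensuring that the $(K_{\cal F}+\Delta)$-MMP exists and terminates in the stated singularity setting (klt $X$, lc $(X,\Delta)$) — this is where the algebraically integrable hypothesis and the earlier structural results (the reduction to toroidal equidimensional fibrations and Proposition \ref{prop_MMP_preserves_*}) do the essential work, reducing the foliated MMP to a usual relative MMP over the base.
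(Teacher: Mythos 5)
Your proposal misses two essential steps that the paper's argument (via Theorem \ref{thm_KF_KX}) handles carefully, and without them the argument does not go through.

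First, Theorem \ref{t_main3} is stated for an \emph{arbitrary} foliated pair $(\cal F,\Delta)$ — there is no hypothesis that $\cal F$ is algebraically integrable. Everything in your proposal (the Cone Theorem \ref{thm_cone}, the weak semi-stable reduction of Theorem \ref{t_AK}, Propositions \ref{prop_*comparison} and \ref{prop_MMP_preserves_*}, the Property $(*)$ formalism) is developed only for foliations induced by a dominant rational map. The paper's proof of Theorem \ref{thm_KF_KX} therefore begins with a reduction: choose a movable class $\alpha$ with $(K_{\cal F}+\Delta)\cdot\alpha<0$, take the first piece $\cal E_1$ of the Harder--Narasimhan filtration of $T_{\cal F}$ with respect to $\alpha$, and invoke \cite[Corollary 2.20]{AD19} to replace $\cal F$ by a \emph{subfoliation} $\cal G\subset\cal F$ with algebraic leaves and $\mu_\alpha(T_{\cal G})\ge\mu_\alpha(T_{\cal F})$, so that $K_{\cal G}+\Delta$ is still not pseudo-effective. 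Without this step there is no way to put yourself in the algebraically integrable setting the rest of your argument depends on.

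Second, the concluding numerical comparison is not correct as stated. The proposed inequality $(K_X+\Delta)\cdot C\le(K_{\cal F}+\Delta)\cdot C$ for any curve $C$ tangent to $\cal F$ does not follow from the inclusion $T_{\cal F}\subset T_X$: one has $K_X=K_{\cal F}+K_{[X/\cal F]}$, so the sign of $(K_X-K_{\cal F})\cdot C$ is governed by $\det(T_X/T_{\cal F})|_C$, which can be negative even on a tangent curve. What the paper actually uses is the exact relation $K_{\cal F'}+\Delta'\sim_{f',\mathbb R}K_{X'}+\Delta'+G$ (Proposition \ref{prop_*comparison}), valid \emph{after} passing to a Property $(*)$ modification, with $G\ge 0$ $\cal F'$-invariant. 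Running the MMP over the base $Z$ and taking a general curve $\Sigma$ in a general fibre of the resulting Mori fibre space, one arranges that $\Sigma$ avoids $G$ and the vertical divisors, giving the honest equality $(K_{X''}+\phi_*\Delta')\cdot\Sigma=(K_{\cal F''}+\phi_*\Delta')\cdot\Sigma<0$; this, rather than a one-sided degree estimate along arbitrary tangent curves, is what produces the $(K_X+\Delta)$-negative covering family. You should also note that the covering family on the original $X$ comes from pushing forward general complete-intersection curves in the fibres of $g$, not arbitrary extremal rational curves from the Cone Theorem, because the comparison with $K_X+\Delta$ only holds for curves that remain general after all the reductions.
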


This is a direct consequence of Theorem \ref{thm_KF_KX} where we prove a 
slightly more general statement.

\medskip 

Finally, the following theorem is a slight generalisation of a theorem by Ambro
\cite[Theorem 0.2]{Ambro04} in the klt case and  Koll\'ar \cite[Proposition 8.4.9]{Kollar07} in the log canonical case. 
Note  that in contrast to the existing proofs, our proof does not make use of variation of Hodge structures, but it relies mainly on methods from the Minimal Model Program (see Section \ref{s_bnefness}). 

\begin{theorem} \label{t_f-trivial}
Let $X$ be a quasi projective normal variety of dimension $n$ and let 
$(X/Z,B)$ be a GLC pair  such that    $B\ge 0$ and such that the induced morphism $f\colon X\to Z$ is a projective contraction.  Assume that 
\begin{enumerate}
\item $(X/Z,B)$ is BP stable over $Z$ (cf. Definition \ref{d_bpstable}); and 
\item $K_X+B\sim_{f,\mathbb R}0$.
\end{enumerate}

Then the moduli part $M_X$ of $(X/Z,B)$ is nef. 
\end{theorem}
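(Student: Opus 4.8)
The plan is to reduce this statement to the Cone Theorem for algebraically integrable foliations (Theorem \ref{thm_cone}) together with the comparison between the moduli part and the foliated canonical divisor (Proposition \ref{prop_*comparison}), following the strategy sketched in the introduction for Theorem \ref{t_main1}. First I would put the fibration into good shape: by a weakly semi-stable reduction (Theorem \ref{t_AK}), passing to a birational model of $X$ and a birational base change $Z'\to Z$, I may assume that $f$ is toroidal and equidimensional. Adding a suitable vertical $\mathbb R$-divisor to $B$ (which does not change the moduli part, since the moduli part only sees the horizontal behaviour), Proposition \ref{prop_*comparison} lets me identify the moduli part $M_X$ of $(X/Z,B)$ with $K_{\mathcal F}+\Delta$, where $\mathcal F$ is the foliation induced by $f$ and $\Delta$ is the horizontal part of $B$. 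The BP stability hypothesis is exactly what guarantees that this identification is compatible with the birational modifications performed, and that it suffices to prove nefness on this model; since $K_X+B\sim_{f,\mathbb R}0$, the relevant pair $(\mathcal F,\Delta)$ is a foliated log canonical pair on a variety over $Z$.

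Next I would argue by contradiction: suppose $M_X=K_{\mathcal F}+\Delta$ is not nef. Then there is a curve $C$ with $(K_{\mathcal F}+\Delta)\cdot C<0$, so the $(K_{\mathcal F}+\Delta)$-negative part of $\overline{\mathrm{NE}}(X)$ is nonempty, and by the Cone Theorem for algebraically integrable foliations (Theorem \ref{thm_cone}) there is a $(K_{\mathcal F}+\Delta)$-negative extremal ray spanned by a rational curve $\ell$ which is contained in a fibre of $f$, i.e. $\ell$ is vertical with respect to $f$. But $\ell$ being contracted by $f$ forces $f^*D\cdot \ell=0$ for every $\mathbb R$-Cartier divisor $D$ on $Z$; combined with $K_X+B\sim_{f,\mathbb R}0$ this gives $(K_X+B)\cdot \ell=0$. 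On the other hand, writing $B=\Delta+B^{\mathrm{vert}}$ and using $(K_{\mathcal F}+\Delta)\cdot \ell<0$ together with the comparison formula relating $K_X+B$ to $K_{\mathcal F}+\Delta$ plus boundary/ramification terms along the toroidal model, I would derive a contradiction with $(K_X+B)\cdot \ell=0$ — essentially, a vertical curve that is negative for the moduli/foliated divisor cannot be trivial for $K_X+B$, because the discriminant contribution is effective and the horizontal part $\Delta$ restricts trivially (or favorably) to a fibre. This is the step that must be handled carefully: one needs the precise shape of the formula in Proposition \ref{prop_*comparison} on the weakly semi-stable model and the fact that the extremal rational curve can be chosen inside a general enough fibre so that the vertical boundary behaves as expected.

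The main obstacle I anticipate is precisely this last bookkeeping: ensuring that the foliated log pair $(\mathcal F,\Delta)$ produced by the reduction is genuinely log canonical (so that Theorem \ref{thm_cone} applies), and that the extra vertical divisor added to make $f$ toroidal does not interfere with either the hypothesis $K_X+B\sim_{f,\mathbb R}0$ or the identification of $M_X$ — this is where BP stability does the real work, and one must check it is preserved under the reduction and under the relative MMP (invoking Proposition \ref{prop_MMP_preserves_*}). A secondary subtlety is descending the nefness of $M_X$ from the weakly semi-stable model back to the original $X/Z$; since $M_X$ is a divisor on $X$ (not on $Z$), nefness must be interpreted on the birational model and then transported using BP stability, so I would state and use the fact that BP stability makes $M_X$ behave compatibly under pullback, exactly as in Theorem \ref{t_BPP}. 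Once these compatibilities are in place, the contradiction above closes the argument and shows $M_X$ is nef.
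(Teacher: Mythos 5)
Your plan is right in its endgame: on a model where Property $(*)$ holds, $f$ is equidimensional, $\Delta\geq 0$ and $(\mathcal F,\Delta)$ is log canonical, Proposition~\ref{prop_*comparison}(2) gives $K_{\mathcal F}+\Delta\sim_{f,\mathbb R}K_X+B\sim_{f,\mathbb R}0$, so any $(K_{\mathcal F}+\Delta)$-negative vertical rational curve produced by Theorem~\ref{thm_cone} yields an immediate contradiction; this is precisely the projective case of Proposition~\ref{p_main} in the paper. The gap is the reduction to that situation, and you have correctly flagged it but not filled it. After the Property~$(*)$ resolution $\beta\colon X'\to X$, the crepant pullback $B'$ with $K_{X'}+B'=\beta^*(K_X+B)$ will in general have \emph{negative} coefficients along $\beta$-exceptional horizontal divisors (since $(X,B)$ is merely dlt), so the horizontal part $\Delta'$ need not be effective and the pair $(\mathcal F',\Delta')$ need not be log canonical. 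Adding a vertical $\mathbb R$-divisor — the fix you propose — cannot repair a non-effective \emph{horizontal} part, and BP stability alone does not make $\Delta'\geq 0$. So the Cone Theorem cannot be invoked on the resolution model as you intend, and the proposal as written does not close.

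The missing idea is an MMP step, which the paper carries out and which is the one genuinely new ingredient. Set $\overline B\geq 0$ to be the enlarged toroidal boundary with $\overline B\geq B'$ and $\overline B-B'$ horizontal and $\beta$-exceptional, and set $B''=B'+f'^*(\Sigma_{Z'}-B'_{Z'})$, so that $(X'/Z',B'')$ still satisfies Property~$(*)$, has the same moduli part $M_{X'}$, and $K_{X'}+B''\sim_{f',\mathbb R}0$. Running a $(K_{X'}+\overline B)$-MMP over $Z'$ with scaling, one has $K_{X'}+\overline B\sim_{f',\mathbb R}\overline B-B''\geq 0$, an effective $\beta$-exceptional horizontal divisor, so the MMP contracts exactly its support in \emph{finitely many steps} — this is how the statement avoids the termination-of-flips hypothesis that Theorem~\ref{thm_main} needs, and it is visible nowhere in your argument. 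On the resulting model $(Y/Z',C)$ with $C=\phi_*\overline B=\phi_*B''$, the foliated pair $(\mathcal G,\Gamma)$ \emph{is} log canonical with $\Gamma\geq 0$ (the MMP for $K_{X'}+\overline B$ is simultaneously an MMP for $K_{\mathcal F'}+\overline\Delta$ by Proposition~\ref{prop_*comparison}, starting from the log canonical pair of Lemma~\ref{lem_toroidal_lc}), $(Y/Z',C)$ is BP stable by Theorem~\ref{t_*BP}, and your contradiction argument — now in the guise of Proposition~\ref{p_main} — applies to give $M_Y$ nef. One then needs to transport nefness from $M_Y$ back to $M_{X'}$ across the MMP; this uses Proposition~\ref{p_maximal} and Remark~\ref{r_maximal}, a comparison you do not address, while the further descent $\beta^*M_X=M_{X'}$ via BP stability is as you say. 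So: correct final contradiction, but missing the MMP reduction that makes the Cone Theorem applicable and missing the comparison of moduli parts across the MMP.
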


\subsection{ Acknowledgements.} The authors would like to thank J. M\textsuperscript cKernan for many useful discussions.

\section{Preliminaries}

We work over an algebraically closed field of characteristic zero. Let $X$ be a normal variety and let $D$ be a $\mathbb R$-divisor on $X$. For any prime divisor $S$ in $X$, we define $m_SD$ to be the coefficient of $D$ along $S$. 
If $f\colon X\dashrightarrow Z$ is a dominant map between normal varieties, we say that a subvariety $W\subset X$ is {\bf horizontal} with respect to $f$ if $W$ dominates $Z$, otherwise we say that $W$ is {\bf vertical} with respect to $f$. 
A {\bf contraction} $f\colon X\to Z$ is a surjective proper morphism between normal varieties such that $f_*\mathcal O_X=\mathcal O_Z$.  Given a reduced divisor $P$ on $Z$, we denote by $f^{-1}(P)$ its preimage as a reduced divisor on $X$. 

Two surjective morphisms $f\colon X\to Z$ and $f'\colon X'\to Z'$ between normal varieties are said to be {\bf birationally equivalent} if there exists a commutative diagram
\[
\begin{tikzcd}
  X' \arrow[r, "\beta",dashrightarrow] \arrow[d, "f'"'] & X \arrow[d, "f"] \\
  Z' \arrow[r, "\alpha",dashrightarrow] &  Z
 \end{tikzcd}
\]
where $\alpha$ and $\beta$ are birational maps. 

A log pair $(X,B)$ consists of a normal variety $X$ and a $\mathbb R$-divisor $B$ on $X$ such that $K_X+B$ is $\mathbb R$-Cartier. We refer to \cite[Section 2.3]{KM98} for the classical definitions of singularities 
(e.g. klt, log canonical, etc.. ) appearing in the Minimal Model Program.

\subsection{Toroidal pairs}

\begin{definition}
Let $(X,\Sigma_X)$ be a log pair, where $\Sigma_X\ge 0$ is a reduced divisor.  We say that $(X,\Sigma_X)$ is a {\bf toroidal pair} if for every closed point $x\in X$, there exists a toric variety $X_\sigma$, a  closed point $t\in X_\sigma$ and an isomorphism of complete local algebras 
\[
\phi_x\colon \widehat{\mathcal O}_{X,x}\simeq \widehat{\mathcal  O}_{X_\sigma,t}
\]
such that the ideal of $\Sigma_X$ maps to the invariant ideal of $X_{\sigma}\setminus T_{\sigma}$ where $T_\sigma\subset X_\sigma$ is the maximal torus. The pair $(X_\sigma, t)$ is called a {\bf local model} at $x\in X$.

Let $(X,\Sigma_X)$ and $(Z,\Sigma_Z)$ be toroidal log pairs. A dominant morphism $f\colon X\to Z$ is a  {\bf toroidal morphism} if for every closed point $x\in X$ there exist local models $(X_\sigma,t)$ at $x$ and $(Z_{\tau},s)$ at $f(x)$, and a toric morphism $g\colon X_\sigma\to Z_{\tau}$ so that the following diagram commutes
\[
\begin{tikzcd}
\widehat {\mathcal O}_{X,x} \arrow[r]& \widehat{\mathcal O}_{X_{\sigma},t} \\
\widehat {\mathcal O}_{Z,f(z)}\arrow[r]\arrow[u] &\widehat {\mathcal O}_{Z_{\tau},s}\arrow[u]
 \end{tikzcd}
\]
where the vertical maps are the algebra homomorphisms induced by $f$ and $g$ respectively. 
\end{definition}

\begin{theorem}\label{t_AK}
Let $(X,B)$ be a log pair and let $f\colon X\to Z$ be a contraction between normal varieties.

Then there exist toroidal pairs   $(X',\Sigma_{X'})$ and $(Z',\Sigma_{Z'})$ and a diagram 

\[
\begin{tikzcd}
  X' \arrow[r, "\beta"] \arrow[d, "f'"'] & X \arrow[d, "f"] \\
  Z' \arrow[r, "\alpha"] &  Z
 \end{tikzcd}
\]

such that 
\begin{enumerate}
\item $\alpha$ and $\beta$ are birational projective morphisms,  
\item $f'$ is a toroidal morphism;
\item $(Z',\Sigma_{Z'})$ is log smooth; 
\item the support of $\beta^{-1}_*B+\Exc \beta$ is contained in $\Sigma_{X'}$; and
\item $X'$ admits quotient toric singularities and $f'$ is equidimensional. 
%
\end{enumerate}
In particular, $X'$ is $\mathbb Q$-factorial. 
\end{theorem}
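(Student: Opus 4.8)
The plan is to deduce this from the toroidalization and weak semistable reduction results of Abramovich--Karu, while tracking the divisor $B$ and reading off the extra properties from combinatorial refinements of the associated cone complexes. First I would use Hironaka's theorem to replace $X$ by a log resolution of $(X,\Supp B)$, replacing $B$ by its strict transform together with a reduced divisor supported on the exceptional locus; this only enlarges the boundary, so it is harmless, and afterwards $\Supp B\cup\Exc$ is a simple normal crossing divisor $\Sigma_X^0$. Applying toroidalization of the dominant morphism $f$ relative to $\Sigma_X^0$, one obtains a commutative square as in the statement with $\alpha,\beta$ projective birational, with $(X',\Sigma_{X'})$ and $(Z',\Sigma_{Z'})$ toroidal pairs and $f'$ a toroidal morphism, and with $\Sigma_{X'}$ containing $\beta^{-1}_*\Sigma_X^0+\Exc\beta$, hence in particular $\beta^{-1}_*B+\Exc\beta$; passing to a smooth subdivision of the cone complex of $Z'$ and pulling it back to $X'$, we may also take $(Z',\Sigma_{Z'})$ log smooth. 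This establishes (1)--(4) except for equidimensionality.

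For equidimensionality and quotient singularities I would argue combinatorially. Since $f'$ is toroidal it corresponds to a map of conical polyhedral complexes $\Delta_{X'}\to\Delta_{Z'}$, and equidimensionality of $f'$ is a purely combinatorial property of this map (every cone of $\Delta_{X'}$ must surject onto a cone of $\Delta_{Z'}$). The combinatorial flattening procedure of Abramovich--Karu produces a subdivision of $\Delta_{Z'}$ — i.e.\ a further projective birational morphism $Z''\to Z'$ — together with the induced subdivision of $\Delta_{X'}$ — a modification $X''\to X'$ — after which $f''\colon X''\to Z''$ is toroidal and equidimensional; since we do not ask for reduced fibres, no finite base change of $Z$ is required and $\alpha$ remains birational. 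Finally, choosing a simplicial subdivision of $\Delta_{X''}$ compatible with the projection to $\Delta_{Z''}$ (possible since $\Delta_{Z''}$ is already smooth), we obtain a toroidal morphism $X'''\to Z''$ that is still equidimensional, with the cone complex $\Delta_{X'''}$ simplicial and hence $X'''$ admitting quotient toric singularities — in particular $\mathbb Q$-factorial. Every modification above is toroidal with respect to the ambient boundary, so it introduces no divisor outside $\Sigma$ and does not disturb the containment in (4); relabelling $X''',Z''$ as $X',Z'$ then gives all of (1)--(5).

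The main difficulty is the interaction of the three combinatorial requirements: making $\Delta_{Z'}$ smooth forces a re-subdivision of $\Delta_{X'}$, which may then fail to be simplicial, and re-simplicializing the total space must not destroy the surjectivity of cones needed for equidimensionality. Arranging that a single common refinement realizes all of these properties simultaneously is precisely the content of the weak semistable reduction machinery, carried out entirely on the level of the projection of cone complexes. By comparison, keeping $\beta^{-1}_*B$ and $\Exc\beta$ inside $\Sigma_{X'}$ is automatic once $B$ is placed in the boundary at the outset, because all of the birational maps involved are toroidal.
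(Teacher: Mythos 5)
Your proposal is correct and takes essentially the same approach as the paper, whose proof is simply a citation to Abramovich--Karu \cite[Theorem 2.1 and Proposition 4.4]{AK00}; you have unpacked what that citation contains, namely toroidalization followed by the combinatorial subdivisions that make the base log smooth, the morphism equidimensional, and the total space simplicial, while observing that no base change is needed because reduced fibres are not required. The one thing worth tightening is the order of operations: after you make the cone complex of $Z'$ smooth and then apply combinatorial flattening, the flattening subdivision of $\Delta_{Z'}$ may destroy its smoothness, so one must re-smooth $\Delta_{Z'}$ and pull that subdivision back to $\Delta_{X'}$ (which preserves equidimensionality) before simplicializing the total space; you gesture at this tension in your last paragraph and correctly attribute its resolution to the AK00 machinery, so this is a presentational rather than a substantive gap.
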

\begin{proof}
This follows from \cite[Theorem 2.1 and Proposition 4.4]{AK00}.
\end{proof}

\subsection{Discriminant and Moduli Part}\label{s_moduli}
Throughout this paper, a pair $(X/Z,B)$ consists of a  surjective proper morphism  $f\colon X\to Z$ between normal varieties and a log pair $(X,B)$. We say that the pair $(X/Z,B)$ is {\bf generically log canonical}, in short GLC, 
if $Z$ is irreducible and 
the pair $(X,B)$ is log canonical over the generic point of $Z$.

Let $(X/Z,B)$ be a GLC pair. Consider a commutative diagram 
\[
\begin{tikzcd}
  X' \arrow[r, "\beta"] \arrow[d, "f'"'] & X \arrow[d, "f"] \\
  Z' \arrow[r, "\alpha"] &  Z
 \end{tikzcd}
\]
where $f'$ is a surjective proper morphism between normal varieties, $Z'$ is irreducible and $\beta$ and $\alpha$ are generically finite surjective projective morphisms.
We may write
 \[
K_{X'}+B'=\beta^*(K_X+B).
\]
Then the {\bf induced pair} $(X'/Z',B')$ is GLC (cf. \cite[Proposition 5.20]{KM98}).

\begin{definition}\label{d_crepant}
Let $(X/Z,B)$ and $(X'/Z',B')$ be  GLC pairs such that the induced morphisms $f\colon X\to Z$ and $f'\colon X'\to Z'$ are birational equivalent contractions. We say that $(X/Z,B)$ and $(X'/Z',B')$ are {\bf crepant} over the generic point of $Z$ if, given birational morphisms  $p\colon W\to X$ and $q\colon W\to X'$ which resolve the indeterminacy of the induced map $X\dashrightarrow X'$, we have that 
\[p^*(K_X+B)-q^*(K_{X'}+B')\] is vertical with respect to the induced morphism $W\to Z$. 
\end{definition}

\medskip
Let $(X/Z,B)$ be a GLC pair. For any prime divisor $P\subset Y$, we define
\[
\gamma_P=\sup\{t\in \mathbb R\mid (X,B+tf^*P) \text{ is log canonical over the generic point of }P\}.
\]
The {\bf discriminant } of $(X/Z,B)$ is the $\mathbb R$-divisor 
\[
B_Z:=\sum_P (1-\gamma_P)P.
\]

We now define the moduli part of a GLC pair $(X/Z,B)$ such that  the induced morphism $f\colon X\to Z$ is a contraction. 
We first assume  that $f$ is equidimensional and 
$K_Z+B_Z$ is $\mathbb R$-Cartier. In this case, we define the moduli part $M_X$ of $(X/Z,B)$ as 
\[
M_X\coloneqq K_X+B-f^*(K_Z+B_Z). 
\]
Note that $M_X$ is only defined up to linear equivalence, as it depends on the choice of the canonical divisors $K_X$ and $K_Z$. 

Consider now any GLC pair $(X/Z,B)$, induced by a morphism $f\colon X\to Z$. By Theorem \ref{t_AK}, there exists an equidimensional contraction $f'\colon X'\to Z'$ which is birational equivalent to $f$, such that $Z'$ is smooth and the induced maps $\alpha\colon Z'\to Z$ and $\beta\colon X'\to X$ are birational morphisms. Let $M_{X'}$ be the moduli part of the induced pair $(X'/Z',B')$. We then define the {\bf moduli part} of $(X/Z,B)$  as
\[M_X\coloneqq\alpha_*M_{X'}.
\]
As above, $M_X$ is only defined up to linear equivalence. It is easy to check that $M_X$ does not depend on the choice of $f'$.

\begin{lemma}
\label{lem_base_change} 
Let $(X/Z,B)$ be a GLC pair and let $\alpha\colon Z'\to Z$ be a finite morphism.
 Consider the following commutative diagram 
\[
\begin{tikzcd}
  X' \arrow[r, "\beta"] \arrow[d, "f'"'] & X \arrow[d, "f"] \\
  Z' \arrow[r, "\alpha"] &  Z
 \end{tikzcd}
\]
and let $(X'/Z',B')$ be the induced GLC pair. 

Then 
\[
K_{Z'}+B'_{Z'} = \alpha^*(K_Z+B_Z).
\]

\end{lemma}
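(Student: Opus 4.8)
The plan is to reduce the asserted identity of $\mathbb{R}$-divisors on $Z'$ to a comparison of log canonical thresholds, which I then control using that $\beta$ is a finite crepant cover. Since an equality of Weil $\mathbb{R}$-divisors is detected along each prime divisor, I would fix a prime divisor $P'\subset Z'$; as $\alpha$ is finite, $P:=\alpha(P')$ is a prime divisor on $Z$ and $\alpha|_{P'}\colon P'\to P$ is finite and surjective. Localising $Z$ at the generic point $\eta_P$ of $P$ and $Z'$ at the generic point $\eta_{P'}$ of $P'$, we may assume that $Z$ and $Z'$ are the spectra of discrete valuation rings; then $K_Z$ is Cartier, $\alpha^*P=e\,P'$ where $e\ge 1$ is the ramification index of $\alpha$ along $P'$, and, since we work in characteristic zero, Riemann--Hurwitz gives $K_{Z'}=\alpha^*K_Z+(e-1)P'$. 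As $m_{P'}(\alpha^*B_Z)=e\,(1-\gamma_P)$ and $m_{P'}(B'_{Z'})=1-\gamma_{P'}$, the coefficient of $P'$ in $K_{Z'}+B'_{Z'}-\alpha^*(K_Z+B_Z)$ equals $(e-1)+(1-\gamma_{P'})-e(1-\gamma_P)=e\gamma_P-\gamma_{P'}$. Hence the lemma is equivalent to the family of identities $\gamma_{P'}=e\,\gamma_P$, one for each prime divisor $P'\subset Z'$, where $\gamma_P$ and $\gamma_{P'}$ are the thresholds from the definition of the discriminant.

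To prove $\gamma_{P'}=e\,\gamma_P$ I would keep working over $\eta_P$ and $\eta_{P'}$. Since $f\circ\beta=\alpha\circ f'$ and $\alpha^*P=e\,P'$, we get $\beta^*f^*P=f'^*\alpha^*P=e\,f'^*P'$, so for every $t\in\mathbb{R}$
\[
\beta^*\bigl(K_X+B+t\,f^*P\bigr)=K_{X'}+B'+t e\,f'^*P'.
\]
Now $\beta\colon X'\to X$ is finite and surjective, being the normalisation of the base change $X\times_Z Z'\to X$ of the finite surjective morphism $\alpha$; hence, by the behaviour of log canonicity under crepant finite covers (cf.\ \cite[Proposition 5.20]{KM98}), for every $t\in\mathbb{R}$ the pair $(X,B+t\,f^*P)$ is log canonical at every point of $X$ over $\eta_P$ if and only if $(X',B'+t e\,f'^*P')$ is log canonical at every point of $X'$ over $\eta_{P'}$ --- here one uses that $\beta$ maps the set of points of $X'$ over $\eta_{P'}$ onto the set of points of $X$ over $\eta_P$, which follows from $\alpha|_{P'}\colon P'\to P$ being finite surjective together with the non-vanishing of tensor products of field extensions. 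Taking the supremum over all such $t$ (respectively $te$) on each side yields $\gamma_{P'}=e\,\gamma_P$, and combined with the previous paragraph this proves the lemma.

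The step I expect to require the most care is the last equivalence in the second paragraph: one must verify precisely that ``log canonical over $\eta_P$'' and ``log canonical over $\eta_{P'}$'' correspond under $\beta$. This needs that $\beta$ is finite surjective, that the whole fibre locus $f^{-1}(\eta_P)$ --- and not merely $\beta^{-1}(f^{-1}(\eta_P))$, which a priori also sits over the other components of $\alpha^{-1}(P)$ --- is dominated by $f'^{-1}(\eta_{P'})$, and that crepant pullback along a finite morphism of normal varieties both preserves and reflects the log canonical property, via the discrepancy formula $a(E',X',B')+1=e(E'/E)\bigl(a(E,X,B)+1\bigr)$. A secondary point is the identity $K_{Z'}=\alpha^*K_Z+(e-1)P'$ used in the reduction, which is valid because in characteristic zero all ramification is tame; this also implicitly fixes the compatible choices of canonical divisors needed for the equality to hold as $\mathbb{R}$-divisors rather than merely up to linear equivalence.
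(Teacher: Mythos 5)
The paper does not give a proof of this lemma at all: it simply cites \cite[Theorem 3.2]{Ambro98}. Your argument, by contrast, is a self-contained and correct derivation, and since it reduces to the right local statement and handles the delicate point explicitly, it is worth comparing in a little more detail.

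Your reduction is sound. Since the claimed identity is an equality of $\mathbb R$-Weil divisors on the normal variety $Z'$, one may indeed check it one prime divisor $P'$ at a time; localising at $\eta_{P'}$ and $\eta_P$ reduces to an extension of DVRs, where (in characteristic zero, so tamely ramified) the relative canonical is $(e-1)P'$, and the coefficient computation $(e-1)+(1-\gamma_{P'})-e(1-\gamma_P)=e\gamma_P-\gamma_{P'}$ is correct. The heart of the matter is then $\gamma_{P'}=e\gamma_P$, which you derive from the crepant-pullback identity $\beta^*(K_X+B+t\,f^*P)=K_{X'}+B'+te\,f'^*P'$ \emph{over the generic point of $P'$}, together with the fact that finite crepant covers both preserve and reflect log canonicity via the discrepancy formula. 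The one genuinely subtle point — that ``lc over $\eta_P$'' and ``lc over $\eta_{P'}$'' really correspond, even though $\beta^{-1}(f^{-1}(\eta_P))$ also lies over the other primes of $Z'$ above $P$ — you identify correctly, and the fix is exactly what you say: the restriction $f'^{-1}(\eta_{P'})\to f^{-1}(\eta_P)$ is surjective because $k(P')/k(P)$ is a finite field extension and normalisation is surjective, so every divisorial valuation with centre meeting $f^{-1}(\eta_P)$ lifts to one with centre meeting $f'^{-1}(\eta_{P'})$. That makes the equivalence, and hence $\gamma_{P'}=e\gamma_P$, go through in both directions. In short: where the paper outsources the statement to Ambro's earlier result (which is proved along broadly similar lines using the ramification formula and invariance of log canonicity under finite base change), your argument supplies the proof directly, with the DVR localisation and the coverage of $f^{-1}(\eta_P)$ by $f'^{-1}(\eta_{P'})$ as the two load-bearing points.
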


\begin{proof}
See \cite[Theorem 3.2]{Ambro98}.
\end{proof}

\subsection{Boundary property}
\begin{definition}\label{d_bpstable}
Let  $(X/Z,B)$ be a GLC pair induced by a morphism $f\colon X\to Z$. The pair $(X/Z,B)$ is said to be {\bf BP stable} (resp. {\bf BP semi-stable}) over $Z$ if $K_Z+B_Z$ is $\mathbb R$-Cartier and  
for any contraction $f'\colon X'\to Z'$ which is birational equivalent to $f$ and such that the induced maps $\alpha\colon Z'\to Z$ and $\beta\colon X'\to X$ are projective birational morphisms, if 
 $B'_{Z'}$ is the discriminant of the induced pair $(X'/Z',B')$ and if we write
\[
K_{Z'}+B'_{Z} = \alpha^*(K_Z+B_Z)
\]
then $B'_Z=B'_{Z'}$ (resp. $B'_Z\ge B'_{Z'}$). 

We say that a GLC pair $(X/Z,B)$ induced by a morphism $f\colon X\to Z$ satisfies the {\bf boundary property} if there exist  a surjective projective morphism $f'\colon X'\to Z'$ between normal varieties, projective birational  morphisms $\alpha\colon Z'\to Z$  and $\beta\colon X'\to X$, and a commutative diagram 
\[
\begin{tikzcd}
  X' \arrow[r, "\beta"] \arrow[d, "f'"'] & X \arrow[d, "f"] \\
  Z' \arrow[r, "\alpha"] &  Z
 \end{tikzcd}
\]
such that the induced pair $(X'/Z',B')$ is BP stable over $Z'$. 
\end{definition}

\begin{remark}\label{r_induced}
\ 

\begin{enumerate}
\item Let $(X/Z,B)$ be a GLC pair and let $\beta\colon X'\to X$ be a birational morphism between normal varieties. Let $(X'/Z,B')$ be the induced GLC pair. 
Then the discriminant $B_Z$ of $(X/Z,B)$ coincides with the discriminant $B'_Z$ of $(X'/Z,B')$. In particular,  $(X/Z,B)$ is BP stable over $Z$ if and only if $(X'/Z,B')$ is BP stable over $Z$. 
\item Let $(X/Z,B)$ be a GLC pair induced by a contraction $f\colon X\to Z$ and which is BP stable over $Z$. Then, it is easy to check that 
\[
M_X= K_X+B-f^*(K_Z+B_Z). 
\]
\end{enumerate}
\end{remark}

\begin{example} Let $(X/Z,B)$ be a GLC pair induced by a birational morphism $f\colon X\to Z$. Then $B_Z=f_*B$ and $M_X=0$. It follows that  $(X/Z,B)$ is BP stable over $Z$ if and only if $K_Z+B_Z$ is $\mathbb R$-Cartier and $f$ is $(K_X+B)$-crepant, i.e. 
\[
K_X+B=f^*(K_Z+B_Z).
\]
\end{example}
\medskip 

Although it will not be used in the rest of the paper, we recall the following

\begin{theorem}\cite[Theorem 1]{Shokurov20}\label{t_BPP}
Let $(X/Z,B)$ be a GLC pair 
and assume that $B$ is a $\mathbb Q$-divisor. 

Then $(X/Z,B)$ satisfies the boundary  property.
\end{theorem}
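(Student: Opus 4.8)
The plan is to combine a weakly semistable reduction with a boundedness argument for the denominators of the discriminant. By Remark \ref{r_induced}(1) the boundary property depends on $f\colon X\to Z$ only up to birational modifications of $X$ over $Z$, and to verify it we must exhibit one projective birational morphism $\alpha\colon Z'\to Z$ and a corresponding model $\beta\colon X'\to X$ such that the induced pair $(X'/Z',B')$ is BP stable over $Z'$. Applying Theorem \ref{t_AK} to $f$, we may therefore assume from the outset that $f$ is a toroidal, equidimensional contraction, that $(Z,\Sigma_Z)$ is log smooth (so that $Z$ and every toroidal blow-up of it is $\mathbb{Q}$-factorial and $K_Z+B_Z$, as well as its analogue on every such blow-up, is $\mathbb{R}$-Cartier), and that $\Supp B\subseteq\Sigma_X$. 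It then suffices to produce a toroidal blow-up $Z'\to Z$ over which the discriminant is \emph{stable}, that is, equal to its pullback under every further toroidal blow-up.

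\textbf{Combinatorial description and boundedness.} In the toroidal equidimensional setting the discriminant $B_Z$ is supported on $\Sigma_Z$, and for a prime $P\subseteq\Sigma_Z$ its coefficient along $P$ equals $1-\gamma_P$, where $\gamma_P$ is the log canonical threshold of $f^*P$ with respect to $(X,B)$ over the generic point of $P$; since $f$ is toroidal this threshold is computed combinatorially, as the minimum of $\tfrac{1-m_{D_i}B}{a_i}$ (and of analogous ratios attached to the toric valuations over the generic point of $P$), where $f^*P=\sum_i a_iD_i$ near the generic point of $P$. The same description governs the discriminant over every toroidal blow-up of $Z$, with the toroidal boundary enlarged to contain the new exceptional divisors. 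From this one reads off that all discriminant coefficients occurring over all toroidal blow-ups of $Z$ are bounded above by a constant depending only on the toroidal data and the largest coefficient of $B$, and --- crucially, because $B$ is a $\mathbb{Q}$-divisor with denominators dividing a fixed integer --- that they lie in a single set of rational numbers with bounded denominators. This finiteness is exactly why the hypothesis that $B$ be a $\mathbb{Q}$-divisor is needed; over $\mathbb{R}$ one would instead invoke the ascending chain condition for log canonical thresholds.

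\textbf{Stabilisation.} Passing from a toroidal blow-up $W$ of $Z$ to a refinement $W'\to W$, over a fixed prime $P$ the minimum defining $\gamma_P$ is taken over a larger set of toric valuations, so the coefficient of the strict transform of $P$ in the discriminant on $W'$ is non-decreasing in $W'$; combined with a semicontinuity inequality controlling the coefficients along the divisors exceptional over $W$, this shows that the discriminants, viewed as $b$-divisors over $Z$, form a non-decreasing family bounded above. A non-decreasing family, uniformly bounded above, with coefficients in a set of bounded denominators must stabilise, so there is a toroidal blow-up $\alpha\colon Z'\to Z$, with a corresponding $\beta\colon X'\to X$, over which the discriminant equals its pullback under every further toroidal blow-up --- and hence, since for a toroidal equidimensional fibration the discriminant over any valuation is read off from the toroidal combinatorics, under every further projective birational base change. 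This says exactly that the induced pair $(X'/Z',B')$ is BP stable over $Z'$, which gives the boundary property for $(X/Z,B)$.

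\textbf{Main obstacle.} The delicate point is the stabilisation step: one has to control the discriminant not divisor-by-divisor but globally as a $b$-divisor over $Z$, so that infinitely many exceptional divisors cannot each contribute a genuinely new coefficient. This forces one to play the monotonicity coming from the combinatorial threshold formula against the boundedness of denominators guaranteed by working with $\mathbb{Q}$-divisors, to verify that the required semicontinuity holds on the chosen model and persists under further base change, and to reduce arbitrary birational base changes to toroidal ones.
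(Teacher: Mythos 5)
The paper does not prove this statement: Theorem \ref{t_BPP} is quoted from \cite[Theorem 1]{Shokurov20} and the authors explicitly remark that it ``will not be used in the rest of the paper,'' so there is no in-paper argument to compare against. Your reduction to the toroidal equidimensional case via Theorem \ref{t_AK}, and the observation that there the discriminant coefficients become combinatorial log canonical thresholds lying in a set of bounded denominators once $B$ is a $\mathbb Q$-divisor, are reasonable opening moves and are broadly consistent with how one would expect to start.

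The step you flag yourself as the main obstacle, stabilisation, is not actually closed, and there are concrete problems with the way it is set up. First, the monotonicity you invoke is misstated: for a fixed prime divisor $P$ on the base, the quantity $1-\gamma_P$ is a birational invariant of the pair over the generic point of $P$ (cf.\ Remark \ref{r_induced}(1)), so the coefficient of the strict transform of $P$ is \emph{constant}, not non-decreasing, as $W$ refines; the genuinely variable quantities are the coefficients of the newly extracted exceptional divisors, and for those the relevant inequality is BP semi-stability $B'_Z\ge B'_{Z'}$, which makes the Cartier closures $\overline{B_W}$ a \emph{decreasing} family of b-divisors bounded below by the discriminant b-divisor, not an increasing one. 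Second, even with boundedness and bounded denominators in hand, the conclusion does not follow: each coefficient of the b-divisor stabilises, but a b-divisor with coefficients in a finite set need not descend to a single model, and the assertion that ``a non-decreasing family, uniformly bounded above, with coefficients in a set of bounded denominators must stabilise'' silently conflates pointwise stabilisation of coefficients with the existence of one model $Z'$ over which the discriminant b-divisor equals the Cartier closure of $B_{Z'}$. That jump is exactly the content of the theorem and is where the real work in \cite{Shokurov20} lies. Finally, the sentence reducing arbitrary birational base changes to toroidal ones at the end of the stabilisation paragraph is asserted without argument; it needs to be shown that once the toroidal valuations satisfy the BP equality, so do all divisorial valuations over $Z'$. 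For contrast, where the present paper does establish BP stability directly (Theorem \ref{t_*BP}), it does so under the additional hypothesis that $K_X+B$ is $f$-nef, obtained by running an MMP, rather than by a purely combinatorial stabilisation; so the route you sketch, if it can be made to work, would be genuinely different from what the paper does, but as written it has a gap at the decisive point.
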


\subsection{Log stable pairs}

\begin{lemma}\label{l_valuation}
Let $X$ be a normal variety and let $F$ be a valuation over $X$.

Then there exists a $\mathbb Q$-divisor $B$ such that $(X,B)$ is log canonical and $F$ is the only log canonical centre of $(X,B)$. 
\end{lemma}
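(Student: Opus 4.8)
The plan is to realise $F$ as a prime divisor on a birational model of $X$, to transport a carefully normalised boundary back to $X$, and then to isolate $F$ by a tie--breaking perturbation. Since a log canonical centre is by definition the image on $X$ of a divisorial log canonical place, we may assume $F$ is a divisorial valuation (if $F$ is a general valuation one replaces it by a divisorial one with the same centre, e.g. obtained by blowing up $c_X(F)$, and makes \emph{that} the unique log canonical place); let $Z=c_X(F)\subseteq X$ be its centre. By resolution of singularities, first I would choose a projective birational morphism $\pi\colon Y\to X$ with $Y$ smooth for which $F$ is realised by a smooth prime divisor $E\subseteq Y$ and for which $E+\Exc(\pi)$ has simple normal crossing support; write $E_1,\dots,E_r$ for the remaining $\pi$-exceptional prime divisors.

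The key point is the following reduction. Suppose $\Delta$ is a $\mathbb Q$-divisor on $Y$, with support inside the log smooth divisor $E+\sum_iE_i+(\text{a general divisor})$, such that $\operatorname{mult}_E\Delta=1$, $\operatorname{mult}_{E'}\Delta<1$ for every other prime divisor $E'$ on $Y$, and $K_Y+\Delta\sim_{\mathbb Q}\pi^*D$ for some $\mathbb Q$-Cartier $\mathbb Q$-divisor $D$ on $X$. Then $(Y,\Delta)$ is log canonical with $E$ as its unique log canonical place: on a log smooth pair, blowing up any stratum of the boundary produces only divisors of discrepancy $>-1$, so $E$ is the only divisor over $Y$ of discrepancy $-1$. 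Setting $B:=\pi_*\Delta$ one gets $D=K_X+B$ and $K_Y+\Delta=\pi^*(K_X+B)$ (the difference is $\pi$-exceptional, $\mathbb Q$-Cartier and $\mathbb Q$-linearly trivial over $X$, hence zero by the negativity lemma); so $K_X+B$ is $\mathbb Q$-Cartier and, discrepancies being preserved, $(X,B)$ is log canonical with $F$ as its unique log canonical place. It therefore suffices to construct such a $\Delta$.

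To build $\Delta$ one must solve $\Delta\sim_{\pi,\mathbb Q}-K_Y$ subject to the coefficient constraints, and here it is essential that $\Delta$ need not be effective: modulo the exceptional divisors, the relative class of $-K_Y$ over $X$ corresponds to the class of $-K_X$ in $\operatorname{Cl}(X)\otimes\mathbb Q$ modulo the $\mathbb Q$-Cartier classes, and this is nonzero exactly when $X$ is not $\mathbb Q$-Gorenstein. Thus $\Delta$ is forced to involve the strict transforms of prime Weil divisors on $X$ that are not $\mathbb Q$-Cartier — and which therefore pass through the locus where $K_X$ fails to be $\mathbb Q$-Cartier — necessarily with small, in particular possibly negative, coefficients; this is why $B$ will not in general be effective. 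Such divisors exist because $\operatorname{Cl}(X)\otimes\mathbb Q$ is a $\mathbb Q$-vector space: one picks finitely many prime Weil divisors $P_j$ with rational coefficients $c_j$ summing to $1$ whose combination represents the required class, and then sets $\Delta=E+\sum_jc_jP_j^Y+\epsilon(\text{general very ample})+\sum_ib_iE_i$, where $P_j^Y$ is the strict transform of $P_j$ and the $b_i\in(-\infty,1)$ are dictated by the linear equivalence.

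The step I expect to be the main obstacle is verifying, for this $\Delta$, both that (i) $(Y,\Delta)$ — equivalently $(X,B)$ — really is log canonical over $\operatorname{Sing}X$ and over the centres of the $E_i$ (a priori the $b_i$ need not lie in $(-\infty,1]$), and (ii) no log canonical place other than $F$ survives. For (ii) one runs the standard tie--breaking argument: the competing log canonical places are finite in number and, after replacing $Z$ by the minimal centre of $F$, have centres distinct from $Z$; adding to $B$ a small multiple of a general very ample divisor through $Z$ in general position with respect to those centres, and tuning its coefficient so that precisely the coefficient along $E$ is driven to $1$, removes them. Point (i) — the real content of the lemma in the non-$\mathbb Q$-Gorenstein case — requires choosing $\pi$ and the $P_j$ carefully so that the $P_j$ cancel the non-$\mathbb Q$-Cartier defect of $K_X$ with small coefficients, and then shrinking the remaining coefficients, which makes $(X,B)$ klt away from $E$ and $Z$ and so leaves no room for further log canonical places there. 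Since every choice can be made rational, the resulting $B$ is a $\mathbb Q$-divisor.
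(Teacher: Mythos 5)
Your overall strategy --- realise $F$ as a divisor on a log resolution $\pi\colon Y\to X$, construct a boundary $\Delta$ on $Y$ with $m_F\Delta=1$ and all other coefficients $<1$, arrange $K_Y+\Delta=\pi^*(K_X+B)$ and descend --- is exactly the paper's, and you have correctly identified the delicate point: when $X$ is not $\mathbb{Q}$-Gorenstein the resulting $B$ is not effective, and keeping $K_X+B$ $\mathbb{Q}$-Cartier is the real issue.

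Where your proposal stalls, as you flag yourself, is in the actual production of $\Delta$. Trying to represent $-K_X$ in $\operatorname{Cl}(X)\otimes\mathbb{Q}$ modulo $\mathbb{Q}$-Cartier classes by $\sum c_jP_j$ and then solving for the exceptional coefficients $b_i$ gives you no control on the $b_i$, and you cannot verify $b_i<1$. The paper's device sidesteps this by working inside a single $\mathbb{Q}$-linear equivalence class rather than solving from scratch. One first fixes \emph{any} log pair $(X,\Delta)$ with $\rfdown\Delta.\le 0$ (for instance $\Delta=\tfrac1mP-nH$ with $H$ very ample, $P\ge 0$ a general reduced Weil divisor with $P\sim -mK_X+mnH$ and $m,n\gg 0$, so that $K_X+\Delta\sim_{\mathbb Q}0$), writes $K_Y+\Gamma=\pi^*(K_X+\Delta)$, sets $t=1-m_F\Gamma$, and perturbs by $tF+A_1-A_2$ where $A_1,A_2\ge 0$ are general effective ample $\mathbb{Q}$-divisors on $Y$ chosen with $tF+A_1-A_2\sim_{\mathbb Q}0$. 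Because this perturbation is $\mathbb{Q}$-linearly trivial, $B\coloneqq \Delta+\pi_*(A_1-A_2)\sim_{\mathbb Q}\Delta$; hence $K_X+B$ is automatically $\mathbb{Q}$-Cartier and $\pi^*(K_X+B)=K_Y+\Gamma+tF+A_1-A_2$. The exceptional coefficients are then simply those of $\Gamma+tF$, and one makes $m_{E}\Gamma<1$ for all exceptional $E\ne F$ by first replacing $\Delta$ with $\Delta-\epsilon A$ for a suitable ample $\mathbb{Q}$-Cartier $A$ whose support contains each centre $\pi(E)$, which strictly increases every discrepancy $a(E,X,\Delta)$. In short, your plan has the right shape, but you should perturb a fixed log pair inside its $\mathbb{Q}$-linear equivalence class, as the paper does, rather than solve for $\Delta$ directly in $\operatorname{Cl}(X)\otimes\mathbb{Q}$; the former makes both the $\mathbb{Q}$-Cartier condition and the coefficient bounds immediate, while the latter leaves exactly the gap you point out.
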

\begin{proof}
Let $\Delta$ be a $\mathbb Q$-divisor such that $(X,\Delta)$ is a log pair. We may assume that 
$\rfdown \Delta.\le 0$. Let $\pi\colon Y\to X$ be a log resolution of $(X,\Delta)$ such that $F$ is a divisor on $Y$. We may write
\[
K_Y+\Gamma=f^*(K_X+\Delta). 
\]
Let $t\in \mathbb Q$ be such that $m_F(\Gamma+tF)=1$ and 
let $A_1,A_2\ge 0$ sufficiently general ample $\mathbb Q$-divisors on $Y$ such that 
\[\Gamma+tF+A_1-A_2\sim_{\mathbb Q}0,\]
 $(Y,\Gamma+tF+A_1-A_2)$ is log canonical and $F$ is its only log canonical place.
 We may then define $B\coloneqq\Delta+\pi_*(A_1-A_2)$, to obtain the desired $\mathbb Q$-divisor. 
\end{proof}

\begin{definition}\label{d_logstable}
Let $(X/Z,B)$  be a GLC pair induced by a contraction $f\colon X\to Z$ and let $B_Z$ be its discriminant. Then $(X/Z,B)$ is said to be {\bf log stable} if for any $\mathbb R$-Cartier divisor $H$ on $Z$ we have that 
$(X,B+f^*H)$ is log canonical if and only if $(Z,B_Z+H)$ is log canonical. 
\end{definition}

\begin{theorem}
\cite[Proposition 4]{Shokurov20} 
\label{t_logstable}
Let $(X/Z,B)$ be a GLC pair induced by a contraction $f\colon X\to Z$.

Then $(X/Z,B)$ is log stable if and only if $(X/Z,B)$ is BP stable over $Z$. 
\end{theorem}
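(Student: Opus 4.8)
Proof plan for Theorem \ref{t_logstable} (that $(X/Z,B)$ is log stable iff it is BP stable over $Z$).

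The plan is to unwind both notions in terms of the behaviour of discriminants under birational pull-backs on the base, and to use Lemma \ref{lem_base_change} together with the key technical tool Lemma \ref{l_valuation}. Throughout, fix a contraction $f\colon X\to Z$ with GLC pair $(X/Z,B)$, and recall that both BP stability and log stability presuppose that $K_Z+B_Z$ is $\mathbb R$-Cartier; so I may assume this from the start and work with the moduli formula $M_X=K_X+B-f^*(K_Z+B_Z)$.

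For the \emph{easy direction} ($\Rightarrow$), suppose $(X/Z,B)$ is log stable; I want to show that for every projective birational $\alpha\colon Z'\to Z$ with the induced diagram, the discriminant $B'_{Z'}$ of $(X'/Z',B')$ equals the divisor $B'_Z$ defined by $K_{Z'}+B'_Z=\alpha^*(K_Z+B_Z)$. The inequality $B'_Z\ge B'_{Z'}$ (BP semi-stability) is automatic and standard: pulling back $K_Z+B_Z$ can only decrease log canonicity, equivalently the discriminant of the base change is at most the ``crepant pullback'' divisor; I would cite the usual sub-adjunction inequality for discriminants. For the reverse inequality, I take a prime divisor $P\subset Z'$ and a rational $t<1-m_P(B'_Z)$; then $(Z',B'_Z+tP)$ is log canonical at the generic point of $P$, hence $(Z,B_Z+\alpha_*(tP)+\text{(correction)})$ — more carefully, I test log canonicity of $(Z', \alpha^*(K_Z+B_Z)+\epsilon(\text{stuff}))$ — and log stability transports this to log canonicity of $(X,B+f'^*(\cdots))$ upstairs, which after pushing forward along $\beta$ shows the discriminant $B'_{Z'}$ has coefficient $\le$ that of $B'_Z$ at $P$. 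The bookkeeping here is where I must be careful that ``log stability'' is stated for $\mathbb R$-Cartier $H$ on $Z$, not on $Z'$; so I need to first show log stability is inherited by the base change $(X'/Z',B')$ — which follows from Lemma \ref{lem_base_change} since the discriminant transforms crepantly and log canonicity is a crepant-invariant notion.

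For the \emph{converse} ($\Leftarrow$), suppose $(X/Z,B)$ is BP stable over $Z$; given an $\mathbb R$-Cartier $H$ on $Z$ I must show $(X,B+f^*H)$ is lc iff $(Z,B_Z+H)$ is lc. One implication is again the general sub-adjunction inequality: if $(X,B+f^*H)$ is lc then so is $(Z,B_Z+H)$, because the discriminant of $(X/Z,B+f^*H)$ is $B_Z+H$ (the extra $f^*H$ shifts every $\gamma_P$ by $m_P H$) and discriminants are always ``below'' the pair downstairs — no stability needed. For the nontrivial direction, assume $(Z,B_Z+H)$ is lc but, for contradiction, $(X,B+f^*H)$ is not lc; then there is a divisorial valuation $E$ over $X$ with log discrepancy $<0$. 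If $E$ is vertical over $Z$, it maps into some subvariety $T\subset Z$, and I want to produce a birational $\alpha\colon Z'\to Z$ extracting a divisor over $T$ that witnesses a violation of BP stability — this is exactly where Lemma \ref{l_valuation} enters: I choose an auxiliary boundary realising a prescribed center, perturb $H$ near $T$, and use that BP stability forces $B'_Z=B'_{Z'}$ on the model $Z'$, contradicting the negativity of the discrepancy of $E$. If $E$ is horizontal this contradicts the GLC hypothesis after shrinking $Z$.

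The \textbf{main obstacle} I anticipate is the non-trivial direction of the converse: translating a log canonical \emph{place} $E$ upstairs on $X$ (possibly horizontal-looking but vertical, or lying over a high-codimension subvariety of $Z$) into a genuine divisor over $Z$ on some birational model, in a way that BP stability can ``see''. BP stability only constrains \emph{divisorial} discriminants on birational models of $Z$; so I must argue that a non-lc place of $(X,B+f^*H)$ over a subvariety $T\subteq Z$ forces, after an appropriate blow-up $Z'\to Z$ centered at $T$ and a compatible modification of $X$, a divisor $P'\subset Z'$ with $m_{P'}(B'_{Z'})>m_{P'}(B'_Z)=m_{P'}(\alpha^*(K_Z+B_Z)+\cdots)$ — i.e. that the ``bad'' divisorial log canonicity threshold upstairs descends to a bad threshold on the base after one blow-up, rather than needing infinitely many. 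This is essentially the content of \cite[Proposition 4]{Shokurov20}, and the delicate point is handling $\mathbb R$-coefficients and the fact that $H$ is only $\mathbb R$-Cartier: I would reduce to the $\mathbb Q$-case by a density/convexity argument on the rational polyhedral cone of $H$'s for which the relevant pairs are lc, so that Lemma \ref{l_valuation} (which produces \emph{$\mathbb Q$}-boundaries) applies.
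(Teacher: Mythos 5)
Your proposal is in the right spirit---you correctly identify Lemma~\ref{l_valuation} as the key technical tool, you correctly structure both directions around tracking discriminants across birational models of $Z$, and you are rightly worried about the $\mathbb R$-Cartier vs.\ $\mathbb Q$-coefficient issue.  However, there are several genuine gaps that the paper's proof avoids by a different arrangement of the argument.

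\textbf{First gap.}  You assert that the inequality $B'_Z\ge B'_{Z'}$ (BP semi-stability) is ``automatic and standard''.  It is not.  In the paper's framework, BP semi-stability is \emph{equivalent} to log canonicity of the induced foliated pair $(\cal F,\Delta)$ (Proposition~\ref{p_canonical}), which is a nontrivial condition.  Indeed, GLC only constrains $(X,B)$ over the generic point of $Z$, so over a divisor of $Z'$ exceptional over $Z$ the pair $(X',B')$ need not be log canonical, and then $B'_{Z'}$ can have coefficients exceeding those of $B'_Z$.  Compounding this, your ``reverse inequality'' paragraph actually ends by concluding $m_P(B'_{Z'})\le m_P(B'_Z)$, i.e.\ you re-derive semi-stability rather than proving the reverse inequality; the $\ge$ direction is never addressed, so the direction ``log stable $\Rightarrow$ BP stable'' is not established.

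\textbf{Second gap.}  You appeal to Lemma~\ref{lem_base_change} to argue that log stability is inherited by birational base changes $\alpha\colon Z'\to Z$.  Lemma~\ref{lem_base_change} is stated for \emph{finite} morphisms; it does not apply to birational $\alpha$, and the claimed inheritance is not obvious for birational base change---it is essentially adjacent to the statement being proved.  The paper avoids this by never testing log stability on $Z'$: instead it tests log stability on $Z$ for carefully chosen $H$ and reads off consequences on $Z'$.

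\textbf{Third gap.}  For ``BP stable $\Rightarrow$ log stable'', you claim one implication (``$(X,B+f^*H)$ lc $\Rightarrow$ $(Z,B_Z+H)$ lc'') requires ``no stability''.  This is false: knowing the discriminant $B_Z+H$ has all coefficients $\le 1$ on $Z$ does not make $(Z,B_Z+H)$ log canonical; one needs control of discrepancies over all higher models of $Z$.  The paper uses BP stability for \emph{both} implications: by passing to a higher model where a component of coefficient $>1$ (upstairs or downstairs) is made divisorial, BP stability identifies the coefficients and transfers the failure of log canonicity across $f$ in either direction.

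\textbf{How the paper's argument differs.}  For ``log stable $\Rightarrow$ BP stable'', the paper fixes a single valuation $F$ over $Z$, uses Lemma~\ref{l_valuation} to build $H$ with $(Z,B_Z+H)$ log canonical and $F$ the \emph{unique} lc place, then invokes log stability once to get $(X,B+f^*H)$ lc.  After a toroidal reduction (Theorem~\ref{t_AK}) so that the induced contraction to $Z'$ (on which $F$ is a divisor) is equidimensional and toroidal, it assumes for contradiction that the discriminant coefficient of $(X/Z',B+f^*H)$ along $F$ is $<1$, and produces a small perturbation $\Theta$ supported near the centre of $F$ that breaks log canonicity on $Z$ while preserving it on $X$---directly contradicting log stability.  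The ``unique lc place'' feature of Lemma~\ref{l_valuation} is what makes the perturbation argument work cleanly, and this is the ingredient your sketch does not use: your generic-$t$ argument computes a threshold along a fixed $P$ but has no mechanism for showing that the discriminant can \emph{not} be strictly below the crepant pullback.

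=== END REVIEW ===
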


\begin{proof}
Let $B_Z$ be the discriminant of $(X/Z,B)$. 

Assume first that $(X/Z,B)$ is BP stable over $Z$. Let $H$ be a $\mathbb R$-Cartier divisor on $Z$ and note that $B_Z+H$ is the discriminant of $(X/Z,B+f^*H)$. Assume that $(X,B+f^*H)$ is not log canonical. Then, by Remark \ref{r_induced}, we may assume that $B+f^*H$ has a component $S$ of coefficient greater than one. Since $(X/Z,B+f^*H)$ is BP stable over $Z$, after possibly replacing $X$ and $Z$ by higher models, we may assume that $f(S)$ is a divisor on $Z$ and the coefficient of $B_Z+H$ along such a divisor is greater than one.  Thus, $(Z,B_Z+H)$ is not log canonical. Vice versa, assume that $(Z,B_Z+H)$ is not log canonical. Then, since $(X/Z,B+f^*H)$ is BP stable over $Z$, after possibly replacing $X$ and $Z$ by higher models, we may assume that a component of $B_Z+H$ has coefficient greater than one and, therefore, $(X,B+f^*H)$ is not log canonical. Thus, $(X/Z,B)$ is log stable. 

Assume now that $(X/Z,B)$ is log stable. Let $F$ be a valuation over $Z$. By Lemma \ref{l_valuation}, there exists a $\mathbb R$-Cartier divisor $H$ on $Z$ such that $(Z,B_Z+H)$ is log canonical and $F$ is the unique log canonical place for $(Z,B_Z+H)$. 
Let $\alpha \colon Z'\to Z$ be a birational morphism from a normal variety $Z'$ such that $F$ is a divisor on $Z'$ and let $K_{Z'}+B'_{Z}=\alpha^*(K_X+B_Z)$.
Since $(X/Z,B)$ is log stable, it follows that $(X,B+f^*H)$ is log canonical.
By Theorem \ref{t_AK} and Remark \ref{r_induced}, after possibly replacing $X$ and $Z'$ by  higher models, we may assume that there exist reduced divisors $\Sigma_X$ and $\Sigma_{Z'}$ on $X$ and $Z'$ respectively such that $(X,\Sigma_X)$ and $(Z',\Sigma_{Z'})$ are toroidal, the support of $B+f^*H$ is contained in $\Sigma_X$ and that the induced map $f'\colon X\dashrightarrow Z'$ is a morphism which is an 
equidimensional toroidal contraction.
Let $B_{Z'}$ is the discriminant of $(X/Z',B)$ and suppose by contradiction that $m_{F}(B_{Z'}+f^*H)<1$. Then, for any divisor $R$ on $X$ dominating $F$, we have that $m_R(B+f^*H)<1$ and, in particular, 
$R$ does not contain any log canoncal centre of $(X,B+f^*H)$.
Thus, for any sufficiently small $\mathbb R$-Cartier $\mathbb R$-divisor $\Theta\ge 0$ on $Z$, we have that $(X,B+f^*(H+\Theta))$ is log canonical along $R$, but if the support of $\Theta$ is a sufficiently general $\mathbb R$-divisor  which contains the centre of $F$ in $Z$ then $(Z,B_Z+f^*(H+\Theta))$ is not log canonical but $(X,B+f^*(H+\Theta))$ is, contradicting the fact that $(X/Z,B)$ is log stable. 

Thus, 
\[m_{F}(B_{Z'}+f^*H)=1=m_F(B'_Z+f^*H)
\]
and, therefore, $(X/Z,B)$ is BP stable over $Z$, as claimed. 								
\end{proof}

\subsection{Property $(*)$}

We begin with the following
\begin{lemma}\label{l_fibre}
Let $(X,B)$ be a log pair such that $B\ge 0$ and let $f\colon X\to Z$ be a contraction between normal varieties of dimension $n$ and $m$ respectively. Let $z\in Z$ be a closed point and assume that $D_1,\dots,D_m$  are effective Cartier divisors on $Z$ containing $z$ such that $(X,B+\sum_{i=1}^m f^*D_i)$ is log canonical around $f^{-1}(z)$. 

Then any irreducible component of $f^{-1}(z)$ has dimension $n-m$. 
\end{lemma}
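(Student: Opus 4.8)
The plan is to prove the two inequalities $\dim W\ge n-m$ and $\dim W\le n-m$ separately, for every irreducible component $W$ of $f^{-1}(z)$. The first one uses nothing about $B$ or the $D_i$: since $f$ is a dominant morphism of irreducible varieties of dimensions $n$ and $m$, every irreducible component of every nonempty fibre has dimension at least $n-m$; this is the standard lower bound on fibre dimension. So the whole content is the reverse inequality, i.e.\ $\cod_X W\ge m$.

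I would deduce this from the following purely local statement, proved by induction on $m$:

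\medskip
\noindent\emph{Let $(X,\Delta)$ be a log pair with $\Delta\ge 0$ which is log canonical in a neighbourhood of the generic point $\eta$ of an irreducible closed subset $W\subsetneq X$. Let $H_1,\dots,H_m$ be effective Cartier divisors on $X$ such that $H_1+\dots+H_m\le \Delta$ near $\eta$ and $W\subseteq \Supp H_i$ for all $i$. Then $\cod_X W\ge m$.}
\medskip

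Granting this, the lemma follows: after shrinking $Z$ around $z$ we may assume each $D_i$ is principal, so $H_i:=f^*D_i$ is a nonzero effective Cartier divisor on $X$ with $\Supp H_i=f^{-1}(\Supp D_i)\supseteq f^{-1}(z)\supseteq W$; moreover $\sum_i H_i=\sum_i f^*D_i\le B+\sum_i f^*D_i=:\Delta$ because $B\ge 0$, and $(X,\Delta)$ is log canonical near $f^{-1}(z)$ by hypothesis, hence near the generic point of $W$. The local statement then gives $\cod_X W\ge m$, which together with the lower bound forces $\dim W=n-m$.

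For the local statement the case $m=0$ is trivial. For the inductive step, choose a prime component $E$ of $H_m$ with $W\subseteq E$ (it exists since $W\subseteq \Supp H_m$). If $\mult_E\Delta>1$ then $(X,\Delta)$ is not log canonical at $\eta_E$, hence not log canonical at $\eta$ either, since $\eta\in E=\overline{\{\eta_E\}}$ and the log canonical locus is open; this contradicts the hypothesis. Therefore $\mult_E\Delta\le 1$, and as $1\le \mult_E H_m\le \mult_E(\sum_i H_i)\le \mult_E\Delta$ we get $\mult_E H_m=\mult_E\Delta=1$ and $\mult_E H_i=0$ for $i<m$. Thus $\Delta=E+\Delta'$ with $\Delta'\ge 0$, $E\not\subseteq\Supp\Delta'$ and $\sum_{i<m}H_i\le\Delta'$ near $\eta$. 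By divisorial adjunction along $E$, with $\nu\colon E^\nu\to E$ the normalisation, the pair $(E^\nu,\Diff_{E^\nu}\Delta')$ is log canonical near $\nu^{-1}(\eta)$ and $\Diff_{E^\nu}\Delta'\ge \nu^*(\Delta'|_E)\ge\sum_{i<m}H_i'$, where $H_i'$ denotes the (nonzero effective Cartier) pull-back of $H_i$ to $E^\nu$. Choosing an irreducible component $W^\nu$ of $\nu^{-1}(W)$ dominating $W$ — so that $\dim W^\nu=\dim W$ and $W^\nu\subseteq\Supp H_i'$ for all $i<m$ — and applying the inductive hypothesis on the normal variety $E^\nu$ to $H_1',\dots,H_{m-1}'\le\Diff_{E^\nu}\Delta'$, we obtain $\cod_{E^\nu}W^\nu\ge m-1$, whence $\cod_X W=\cod_{E^\nu}W^\nu+1\ge m$.

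The step that needs the most care is the adjunction bookkeeping in the inductive step: one must check that $\Diff_{E^\nu}\Delta'$ really dominates $\sum_{i<m}H_i'$ (using effectivity of $\Diff_{E^\nu}(0)$ and the fact that none of $H_1,\dots,H_{m-1}$ contains $E$), that $E^\nu$ is again a normal variety so that the inductive hypothesis applies, and that $\cod$ and $\dim$ are preserved by the finite map $\nu$. The reduction of the lemma to the local statement and the dimension arithmetic are routine.
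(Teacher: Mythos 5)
Your proof is correct and rests on the same two ingredients as the paper's: a multiplicity bound forcing a prime component of one of the $f^*D_i$ to appear in the boundary with coefficient exactly one, followed by divisorial adjunction and induction. The packaging, however, is different. The paper argues by contradiction, taking a component $G$ of $f^{-1}(z)$ of codimension $k<m$ and inducting on $k$; the base case $k=1$ is a coefficient count on $G$, and the inductive step descends along the normalisation of a component of $f^*D_k$ and re-applies the lemma to the restricted morphism $X'\to Z'$. You instead distil a purely local statement about a log pair and Cartier divisors, with no reference to $f$ or $Z$ at all, and induct on the number $m$ of divisors; the multiplicity bookkeeping ($\mult_E\Delta=1$, $\mult_E H_i=0$ for $i<m$) that the paper handles in its $k=1$ base case is absorbed into your inductive step. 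Your abstraction has the advantage of not having to track the target of the contraction through the adjunction, but one detail deserves a correction: the intermediate term $\nu^*(\Delta'|_E)$ in your chain of inequalities is not literally defined, since $\Delta'$ need not be $\mathbb{Q}$-Cartier. The end-to-end inequality $\Diff_{E^\nu}\Delta'\geq \sum_{i<m}H_i'$ does hold, but the clean way to see it is via $\Diff_{E^\nu}(\Gamma+H)=\Diff_{E^\nu}\Gamma+\nu^*(H|_E)$ for a Cartier $H$ not containing $E$, applied with $\Gamma=\Delta'-\sum_{i<m}H_i\geq 0$ near $\eta$, together with effectivity of $\Diff_{E^\nu}$ of an effective divisor — essentially what you sketch in your concluding paragraph.
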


\begin{proof}
Suppose not. Then there exists a component $G$ of $f^{-1}(z)$ of codimension $k$ where $k<m$. In particular, it follows that $m>1$ and $k\ge 1$. 
We proceed by induction on $k$. If $k=1$, then $G$ is a divisor and  the coefficient of $\sum_{i=1}^m f^*D_i$ along $G$ is at least $m$, a contradiction. 

Assume now that $k>1$. Let $X'$ be the normalisation of a component of $f^*D_k$ containing $G$
and let $Z'$ be the normalisation of $f(X')$. By adjunction and by induction, it follows that $G$ has dimension equal to $\dim X'-\dim Z'=n-m$, a contradiction. Thus, our claim follows. 
\end{proof}

\begin{definition}\label{d_property*}
Let $(X/Z,B)$ be a GLC pair, defined by a  morphism $f\colon X\to Z$.  
We say that $(X/Z,B)$ satisfies {\bf Property $(*)$} if $f$ is a projective  contraction and the following hold:
\begin{enumerate}
\item  there exists a reduced divisor $\Sigma_Z$ on $Z$  such that $(Z,\Sigma_Z)$ is log smooth and 
the vertical part of $B$ coincides with 
$f^{-1}(\Sigma_Z)$; and 
\item for any closed point $z\in Z$ and for any  $\Sigma\ge \Sigma_Z$ reduced divisor on $Z$ such that $(Z,\Sigma)$ is log smooth around $z$,  we have that $(X,B+f^*(\Sigma-\Sigma_Z))$ is log canonical around $f^{-1}(z)$.
\end{enumerate}
\end{definition}

\begin{lemma}\label{l_star}
Let $(X/Z,B)$ be a GLC pair satisfying Property $(*)$. Let $\Sigma_Z$ be the induced divisor on $Z$ and let $f\colon X\to Z$ be the induced contraction. 

Then 
\begin{enumerate}
\item $(X,B)$ is log canonical; 
\item  the discriminant  $B_Z$ of $(X/Z,B)$ coincides with $\Sigma_Z$; and
\item  if $B\ge 0$ then $f$ is equidimensional outside $\Sigma_Z$. 
\end{enumerate}
\end{lemma}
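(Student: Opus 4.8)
The plan is to extract all three statements from the single definition of Property $(*)$, using Lemma \ref{l_fibre} as the main tool for the equidimensionality claim. For (1), I would apply condition (2) of Property $(*)$ with $\Sigma=\Sigma_Z$: this gives that $(X,B+f^*(\Sigma_Z-\Sigma_Z))=(X,B)$ is log canonical around $f^{-1}(z)$ for every closed point $z\in Z$, and since these fibres cover $X$, the pair $(X,B)$ is log canonical everywhere. For (2), the point is to compute $\gamma_P$ for each prime divisor $P\subset Z$. If $P\subset\Sigma_Z$, then taking $\Sigma=\Sigma_Z$ in condition (2) and using that the vertical part of $B$ is exactly $f^{-1}(\Sigma_Z)$ (so the coefficient of $f^*P$ in $B$ is $1$ along the generic point of $P$), one sees $\gamma_P=0$, hence the coefficient of $B_Z$ along $P$ is $1$; conversely the coefficient cannot exceed $1$ since $(X,B)$ is log canonical by (1). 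If $P\not\subset\Sigma_Z$, then near the generic point of $P$ the divisor $\Sigma_Z+P$ is still log smooth, so condition (2) applies with $\Sigma=\Sigma_Z+P$ and gives that $(X,B+f^*P)$ is log canonical over the generic point of $P$; this forces $\gamma_P\ge 1$, and again $\gamma_P\le 1$ by log canonicity, so $\gamma_P=1$ and the coefficient of $B_Z$ along $P$ is $0$. Altogether $B_Z=\Sigma_Z$.

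For (3), suppose $B\ge 0$ and let $z\in Z\setminus\Sigma_Z$ be a closed point; set $m=\dim Z$. Choose general hyperplane sections (or general members of a very ample linear system) $D_1,\dots,D_m$ through $z$ such that $(Z,\Sigma_Z+\sum_{i=1}^m D_i)$ is log smooth around $z$; this is possible precisely because $z\notin\Sigma_Z$, so locally near $z$ one is in the log smooth situation and Bertini-type genericity applies. By condition (2) of Property $(*)$ applied with $\Sigma=\Sigma_Z+\sum D_i$, the pair $\bigl(X,B+f^*(\sum_{i=1}^m D_i)\bigr)$ is log canonical around $f^{-1}(z)$. Since $B\ge 0$, Lemma \ref{l_fibre} applies and shows that every irreducible component of $f^{-1}(z)$ has dimension $n-m$, i.e. $f$ is equidimensional at $z$. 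As $z$ was an arbitrary point of $Z\setminus\Sigma_Z$, this proves equidimensionality outside $\Sigma_Z$.

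The main obstacle I anticipate is purely bookkeeping in part (2): one has to be careful that ``log canonical over the generic point of $P$'' in the definition of $\gamma_P$ interacts correctly with the ``around $f^{-1}(z)$'' phrasing in Property $(*)$, and that when $P\subset\Sigma_Z$ the coefficient of the vertical part of $B$ along (a component of) $f^{-1}(P)$ is indeed $1$ — this is exactly the content of the equality ``vertical part of $B$ equals $f^{-1}(\Sigma_Z)$'', but one should note that $f^{-1}(\Sigma_Z)$ is taken with reduced structure, so all such coefficients are $1$ and none exceed $1$, which is compatible with log canonicity and pins down $\gamma_P=0$. The other mild subtlety is ensuring, in part (3), that a log smooth completion $(Z,\Sigma_Z+\sum D_i)$ with $m$ divisors through $z$ genuinely exists near $z$; this is immediate since $z\notin\Sigma_Z$ places us in a neighbourhood where $Z$ is smooth and $\Sigma_Z$ is a snc divisor avoiding $z$, so any $m$ general divisors through $z$ work.
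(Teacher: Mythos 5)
Your proof is correct and follows essentially the same route as the paper: deduce (1) directly from condition (2) of Property $(*)$ with $\Sigma=\Sigma_Z$, deduce (2) by computing $\gamma_P$ in the two cases $P\subset\Sigma_Z$ and $P\not\subset\Sigma_Z$ (the paper phrases this as first showing $\Sigma_Z\le B_Z$ and then that no $P\not\subset\Sigma_Z$ lies in $\Supp B_Z$, which is the same computation), and deduce (3) by choosing $m$ hypersurfaces through $z$ forming an snc divisor and invoking Lemma~\ref{l_fibre}. Your one vague spot — ``$\gamma_P\le 1$ by log canonicity'' in the case $P\not\subset\Sigma_Z$ — is easily repaired: a component $Q$ of $f^{-1}(P)$ dominating $P$ is vertical and not in $f^{-1}(\Sigma_Z)$, so $m_QB=0$ while $m_Qf^*P\ge 1$, forcing $\gamma_P\le 1$; the paper glosses over this at the same level of detail.
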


\begin{proof}
 (1) follows immediately by the definition of Property $(*)$. In particular, the coefficients of $B_Z$ are at most one.

We now prove (2). Since the vertical part of $B$ coincides with $f^{-1}(\Sigma_Z)$, it follows that $\Sigma_Z\le B_Z$. 
 Let $P$ be a prime divisor which is not contained in $\Sigma_Z$ and let $\eta\in P$ be its general point. Then $(Z,\Sigma_Z+P)$ is log smooth at $\eta$ and, therefore, $(X,B+f^*P)$ 
is log canonical along $f^{-1}(\eta)$, which implies that $P$ is not contained in the support of $B_Z$. Thus, (2) follows.

We finally prove (3). Let $z\in Z$ be a closed point which is not contained in $\Sigma_Z$ and, if $m$ denotes the dimension of $Z$, let $\Sigma_1,\dots,\Sigma_m$ be distinct hypersurfaces in $Z$ such that $z\in \Sigma_i$ for $i=1,\dots,m$ and $(Z,\Sigma_Z+\sum_{i=1}^m \Sigma_i)$ is log smooth at $z$. Then $(X,B+\sum_{i=1}^m f^*\Sigma_i)$ is log canonical along $f^{-1}(z)$ and Lemma \ref{l_fibre} implies that the dimension of every component of $f^{-1}(z)$ is equal to $\dim X-\dim Z$. Thus, (3) follows. 
\end{proof}

\begin{lemma}\label{l_lcc} Let $(X/Z,B)$ be a GLC pair satisfying Property $(*)$ such that $B\ge 0$. Let $\Sigma_Z$ be the induced divisor on $Z$ and let $f\colon X\to Z$ be the induced contraction. 
Let $\Sigma\ge \Sigma_Z$ be a reduced divisor on $Z$ such that $(Z,\Sigma)$ is log smooth and let $P$ be a stratum of $\Sigma$. 

Then any irreducible component of $f^{-1}(P)$ is a log canonical centre of $(X,B+f^*(\Sigma-\Sigma_Z))$.
\end{lemma}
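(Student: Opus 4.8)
The plan is to reduce to the case when $P$ is a point on $Z$ and then invoke Lemma \ref{l_fibre}. First I would fix a stratum $P$ of $\Sigma$ of codimension $k$ in $Z$; so $P$ is an irreducible component of an intersection $\Sigma^{(1)}\cap\dots\cap\Sigma^{(k)}$ of $k$ distinct components of $\Sigma$, and since $(Z,\Sigma)$ is log smooth, $(Z,\Sigma)$ is log smooth at the generic point $\eta$ of $P$. By Property $(*)$, applied with the reduced divisor $\Sigma$, the pair $(X,B+f^*(\Sigma-\Sigma_Z))$ is log canonical around $f^{-1}(\eta)$ — in fact around all of $f^{-1}(P)$ after shrinking $Z$ to a neighbourhood of $\eta$. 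Now let $G$ be an irreducible component of $f^{-1}(P)$; the goal is to show $G$ is a log canonical centre, i.e. that $m_G\bigl(B+f^*(\Sigma-\Sigma_Z)\bigr)$ contributes a coefficient-one stratum, equivalently that some divisorial valuation $E$ over $X$ with centre $G$ has log discrepancy zero for $(X,B+f^*(\Sigma-\Sigma_Z))$.

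The key step is a localisation-and-dimension-count argument. Work in an analytic or \'etale neighbourhood, or pass to the toroidal model coming from Theorem \ref{t_AK} so that after the allowed birational replacements $f$ is equidimensional toroidal with $\Sigma\subset\Sigma_Z$ logic in place; by Lemma \ref{l_star}(3) and the toroidal structure, over the generic point $\eta$ of $P$ the fibre $f^{-1}(\eta)$ is equidimensional of the expected dimension. Then $f^{-1}(P)$, viewed over $Z$ localised at $\eta$, is exactly the preimage of a codimension-$k$ log-canonical-centre (stratum) of $(Z,\Sigma)$. Writing $\Sigma-\Sigma_Z = \sum_j \Sigma^{(j)}$ restricted suitably and using that $P$ is a connected component of the intersection of $k$ of the $\Sigma^{(j)}$'s, I would apply Lemma \ref{l_fibre} (with the $k$ Cartier divisors $\Sigma^{(1)},\dots,\Sigma^{(k)}$ pulled back, in the localised picture where $\dim Z = k$ at $\eta$) to conclude that every component $G$ of $f^{-1}(P)$ has codimension exactly $k$ in $X$; combined with the fact that $(X,B+f^*(\Sigma-\Sigma_Z))$ is log canonical there and that the coefficient of $f^*\Sigma^{(j)}$ forces a total coefficient $\ge k$ along any codimension-$\le k$ preimage stratum, the log canonical condition pins the relevant valuation to log discrepancy zero, so $G$ is a log canonical centre.

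The main obstacle I anticipate is making the ``localisation'' rigorous: the divisors $\Sigma^{(j)}$ are globally only $\mathbb R$-Cartier pieces of $\Sigma$, not necessarily Cartier, and $P$ may not be cut out scheme-theoretically by exactly $k$ of them globally, so one must either shrink $Z$ around the generic point of $P$ (where $\Sigma$ is log smooth, hence the components are Cartier and $P$ is a smooth complete intersection locally) and argue the statement about log canonical centres is local on $Z$ — which it is, since being a log canonical centre is detected at the generic point — or pass to a log resolution/toroidal model where the combinatorics of strata is transparent. A secondary point is bookkeeping with $\Sigma-\Sigma_Z$ versus $\Sigma$: since the vertical part of $B$ is exactly $f^{-1}(\Sigma_Z)$ by Property $(*)$, the divisor $B+f^*(\Sigma-\Sigma_Z)$ has vertical part $f^{-1}(\Sigma)$ with all coefficients equal to one along the $f^{-1}(\Sigma^{(j)})$, which is precisely what is needed for the components of $f^{-1}(P)$ over a codimension-$k$ stratum to acquire log discrepancy zero; I would spell this out using adjunction along $f^{-1}(\Sigma^{(1)})$ and induction on $k$, mirroring the inductive structure already used in the proof of Lemma \ref{l_fibre}.
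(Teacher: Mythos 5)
Your proposal takes a genuinely different route from the paper's proof, and as written it has a gap at the crucial step. The paper's argument is essentially a one-liner: since the vertical part of $B$ equals $f^{-1}(\Sigma_Z)$ and $(X,B+f^*(\Sigma-\Sigma_Z))$ is log canonical, every component of each $f^{-1}(\Sigma_i)$ appears in the boundary with coefficient exactly one and is therefore a log canonical centre; since $P$ is a component of $\Sigma_1\cap\dots\cap\Sigma_k$, any component of $f^{-1}(P)$ is a component of the intersection $f^{-1}(\Sigma_1)\cap\dots\cap f^{-1}(\Sigma_k)$ of log canonical centres, and the conclusion then follows from \cite[Theorem 1.1]{Ambro11}, which states that any component of an intersection of log canonical centres is again a log canonical centre. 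No toroidal model, no dimension counting, and no base change is needed.

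The gap in your proposal is precisely at the sentence ``the log canonical condition pins the relevant valuation to log discrepancy zero.'' Knowing that $G\subset f^{-1}(P)$ has codimension $k$ and sits inside the intersection of $k$ coefficient-one components of the boundary does \emph{not} on its own imply that $G$ is a log canonical centre: that implication is exactly the content of \cite[Theorem 1.1]{Ambro11} (or, in the special cases one can reduce to, of the precise inversion-of-adjunction statement underlying it), and you never supply it. The dimension count via Lemma \ref{l_fibre} is in fact a detour -- Ambro's result does not require knowing the codimension of the intersection, and Lemma \ref{l_fibre} is stated over a closed point rather than over the generic point of $P$, so it would need to be modified before it applied here. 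Your closing remark that you would ``spell this out using adjunction along $f^{-1}(\Sigma^{(1)})$ and induction on $k$'' is the kernel of a legitimate alternative argument -- restricting to a component of $f^{-1}(\Sigma_1)$, using that the restricted pair stays log canonical with the remaining $f^{-1}(\Sigma_j)$'s contributing coefficient-one components, and inducting -- but that programme needs to be carried out carefully (one must keep track of the pair produced by adjunction and show that log canonical centres lift back), and once done it essentially reproves the relevant instance of Ambro's theorem by hand. Citing \cite[Theorem 1.1]{Ambro11} directly is both cleaner and what the paper does.
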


\begin{proof} Let $k$ be the codimension of $P$ in $Z$. By assumption, there exist irreducible components $\Sigma_1,\dots, \Sigma_k$ of $\Sigma$ such that $P=\bigcap_{i=1}^k \Sigma_i$. Since the vertical part of $B$ coincides with $f^{-1}(\Sigma_Z)$ and since $(X,B+f^*(\Sigma-\Sigma_Z))$ is log canonical along $f^{-1}(\Sigma_i)$, it follows that $f^{-1}(\Sigma_i)$ is a log canonical centre of $(X,B+f^*(\Sigma-\Sigma_Z))$.
Thus, any irreducible component of $f^{-1}(P)$ is an intersection of log canonical centres of $(X,B+f^*(\Sigma-\Sigma_Z))$ and the claim follows from \cite[Theorem 1.1]{Ambro11}. 
\end{proof}


%
%

\begin{proposition}\label{p_toroidal}
Let $(X,\Sigma_X)$ and $(Z,\Sigma_Z)$ be toroidal pairs such that $(Z,\Sigma_Z)$ is log smooth and let $f\colon X\to Z$ be a toroidal contraction. Let $(X,B)$ be a log canonical pair such that  $\Supp B\subset \Sigma_X$ and such that the vertical part of $B$ coincides with  $f^{-1}(\Sigma_Z)$

Then $(X/Z,B)$ satisfies Property $(*)$. 
\end{proposition}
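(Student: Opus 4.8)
The plan is to verify the two conditions in Definition \ref{d_property*} directly from the toroidal structure. Condition (1) is essentially free: by hypothesis the vertical part of $B$ coincides with $f^{-1}(\Sigma_Z)$, and $(Z,\Sigma_Z)$ is log smooth, so I only need to record that $\Sigma_Z$ witnesses condition (1). The substance is condition (2): given a closed point $z\in Z$ and a reduced divisor $\Sigma\ge\Sigma_Z$ with $(Z,\Sigma)$ log smooth around $z$, I must show $(X,B+f^*(\Sigma-\Sigma_Z))$ is log canonical around $f^{-1}(z)$.

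The key step is to reduce this to a purely toric computation using local models. First I would shrink to a neighbourhood of $z$, and since $(Z,\Sigma)$ is log smooth around $z$ I may choose the local model $(Z_\tau,s)$ at $z$ so that $\Sigma-\Sigma_Z$ is cut out near $z$ by (part of) a regular system of parameters transverse to $\Sigma_Z$; equivalently, enlarging $Z_\tau$ if necessary, I can arrange that both $\Sigma_Z$ and $\Sigma$ correspond to invariant divisors in an ambient smooth toric variety, with $\Sigma$ corresponding to a union of coordinate hyperplanes. Then, by the definition of a toroidal morphism, there is a compatible local model $(X_\sigma,t)$ at each point $x\in f^{-1}(z)$ and a toric morphism $g\colon X_\sigma\to Z_\tau$ fitting into the commuting square of complete local rings. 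The point is that formal-local log canonicity of $(X,B+f^*(\Sigma-\Sigma_Z))$ at $x$ is equivalent to log canonicity of the corresponding invariant data on the toric variety $X_\sigma$: the support of $B$ lies in $\Sigma_X$, which on $X_\sigma$ is the toric boundary, and the coefficients of $B$ along the components of $\Sigma_X$ are $\le 1$ because $(X,B)$ is log canonical, while $g^*(\Sigma-\Sigma_Z)$ is an effective invariant divisor whose support is again a subset of the toric boundary of $X_\sigma$ (and disjoint, as a divisor class, from the horizontal components since $\Sigma-\Sigma_Z$ is vertical over $Z$). On a $\mathbb Q$-Gorenstein toric variety a boundary supported on the toric boundary with all coefficients $\le 1$ is log canonical, so $(X_\sigma,\text{(boundary)})$ is log canonical; hence $(X,B+f^*(\Sigma-\Sigma_Z))$ is log canonical at $x$. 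Since $x\in f^{-1}(z)$ was arbitrary, condition (2) holds around $f^{-1}(z)$.

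There is one bookkeeping point worth isolating: I must check that $f^*(\Sigma-\Sigma_Z)$ has support contained in the toric boundary $\Sigma_X$ of the local model, so that all of $B+f^*(\Sigma-\Sigma_Z)$ is supported on the toric boundary. This is true because $\Sigma-\Sigma_Z$ is supported on $\Sigma$ which is log smooth over $z$, and $f$ being toroidal means $f^{-1}$ of any invariant divisor of $Z_\tau$ is an invariant divisor of $X_\sigma$; thus $g^*(\Sigma-\Sigma_Z)$ is invariant, so supported on the toric boundary. I also need that the coefficients of $g^*(\Sigma-\Sigma_Z)$ along these boundary components cause no problem: but since all toric-boundary coefficients $\le 1$ already give log canonicity, and $g^*(\Sigma-\Sigma_Z)$ together with $B$ may push some coefficient above $1$ — here is the real content — I must use that $\Sigma-\Sigma_Z$ is reduced and its components are \emph{not} among the components of $\Sigma_Z$, so the components of $g^*(\Sigma-\Sigma_Z)$ that are vertical over $Z$ do not already appear in $B$ (whose vertical part is exactly $f^{-1}(\Sigma_Z)$), and on each such component the coefficient is at most the multiplicity of $g$, matched against the discrepancy contribution in the toric setting; equidimensionality is \emph{not} assumed here, so I would instead invoke the general fact (reduced boundary equal to the toric boundary is log canonical, and adding $f^{-1}$ of a log smooth reduced divisor transverse to $\Sigma_Z$ keeps it so for toroidal $f$) rather than a coefficient count.

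The main obstacle is precisely this last point: controlling the coefficients of $f^*(\Sigma-\Sigma_Z)$ along the \emph{vertical} prime divisors of $X$ and confirming that adding them to $B$ does not destroy log canonicity. The clean way around it is to work entirely on the toric local models, where $B+f^*(\Sigma-\Sigma_Z)$ becomes a divisor supported on, and bounded by, the full toric boundary $\Sigma_X=X_\sigma\setminus T_\sigma$, together with the standard fact that $(X_\sigma,\Sigma_X)$ is log canonical (indeed its own canonical bundle formula is trivial); the compatibility of complete local rings in the definition of a toroidal morphism transports this back to $(X,B+f^*(\Sigma-\Sigma_Z))$ near $f^{-1}(z)$. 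Everything else — reducing to a neighbourhood of $z$, choosing $\Sigma$ to be a sub-boundary of an ambient smooth toric model, invoking Lemma \ref{l_fibre}-type transversality — is routine.
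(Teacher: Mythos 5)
The overall strategy — reduce condition $(2)$ of Property $(*)$ to a purely toric computation on local models — is the same one the paper uses. However, you identify the real difficulty ("controlling the coefficients of $f^*(\Sigma-\Sigma_Z)$ along the vertical prime divisors") and then do not actually resolve it. The pullback $f^*(\Sigma-\Sigma_Z)$ is a priori a Weil divisor with possibly large integer multiplicities along vertical components, and for log canonicity to hold every such multiplicity must be $\le 1$. You notice this, remark that "on each such component the coefficient is at most the multiplicity of $g$," and then, instead of bounding that multiplicity, you retreat to "invoke the general fact (reduced boundary equal to the toric boundary is log canonical, and adding $f^{-1}$ of a log smooth reduced divisor transverse to $\Sigma_Z$ keeps it so for toroidal $f$)." That "general fact" is precisely the unproved claim; and the silent replacement of $f^*$ by $f^{-1}$ is where the gap sits, since a priori these differ.

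The missing step is the specific observation the paper makes and proves: because $f$ is toroidal with respect to $(X,\Sigma_X)$ and $(Z,\Sigma_Z)$, any divisor $D$ on $Z$ with $f^*D$ non-reduced must be contained in $\Supp\Sigma_Z$. Equivalently, $f$ can only ramify over $\Sigma_Z$ (this is a local toric fact: a dominant toric morphism is unramified over the torus of the target, so non-reduced fibres can occur only over the toric boundary). Applied to the components of $\Sigma-\Sigma_Z$, which by hypothesis are not contained in $\Sigma_Z$, this forces $f^*(\Sigma-\Sigma_Z)$ to be reduced, i.e.\ equal to $f^{-1}(\Sigma-\Sigma_Z)$. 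Once this is in hand, $g^*(\Sigma-\Sigma_Z)+\Sigma_X$ is a reduced divisor supported on the invariant boundary of the (suitably enlarged) local toric model, hence log canonical, and the conclusion follows. Without this reducedness statement your argument does not close, because the coefficients of $f^*(\Sigma-\Sigma_Z)$ that you need to control are exactly what the toroidal ramification claim bounds.

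A secondary confusion worth flagging: you write that $B+f^*(\Sigma-\Sigma_Z)$ is "supported on, and bounded by, the full toric boundary $\Sigma_X = X_\sigma\setminus T_\sigma$." If $(X_\sigma,s)$ is a local model of $(X,\Sigma_X)$, its toric boundary corresponds to $\Sigma_X$, and the components of $f^*(\Sigma-\Sigma_Z)$ are (generically) not in $\Sigma_X$. What is actually used is an enlarged local model whose boundary is $\Sigma_X + f^{-1}(\Sigma-\Sigma_Z)$, and justifying that this enlargement is again a toroidal local model compatible with $f$ is part of the content — it is exactly where the reducedness statement is needed.
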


\begin{proof} By assumption, $(X, B)$ satisfies (1) of Definition \ref{d_property*}.

To show that (2) of Definition \ref{d_property*} holds, it suffices to consider the case where $B = \Sigma_X$. 
Fix $z \in Z$ and $x \in f^{-1}(z)$.  Verifying (2) is local on $Z$ and $X$ and therefore
it is enough to show that (2) holds in an analytic neighbourhood of $x \in X$.

Let $\Sigma \geq \Sigma_Z$ be a reduced divisor such that
$(Z, \Sigma)$ is log smooth.
We may find a local model $(X_{\tau},t)$ at $z \in Z$ such that $\Sigma$ maps to 
a torus invariant divisor, which we will continue to denote by $\Sigma$.
By \cite[Proposition 3.2]{AK00}, we may assume that there exists  a local model $(X_{\sigma},s)$ of $x \in X$
so that the induced morphism between the local models $g\colon X_\sigma \rightarrow X_\tau$ is toric.
In particular, $g^{-1}(\Sigma)$ is contained in the torus
invariant boundary of $X_\sigma$.

Since $f\colon X \to Z$ is toroidal, if $D \subset Z$ is any divisor such that $f^*D$ is not reduced then $D \subset \Supp \Sigma_Z$.
It follows that $g^*(\Sigma-\Sigma_Z)$ is reduced.
Next, observe that $g^*(\Sigma -\Sigma_Z)$ has no components in common with $\Sigma_X$ and so
$g^*(\Sigma-\Sigma_Z)+\Sigma_X$ is a reduced divisor 
contained in the torus invariant boundary of $X_{\sigma}$.  It follows that 
$(X_\sigma, g^*(\Sigma-\Sigma_Z)+\Sigma_X)$ is log canonical, and hence $(X, f^*(\Sigma-\Sigma_Z)+\Sigma_X)$ is log canonical
in a neighborhood of $x \in X$, as required.
\end{proof}


%
%
%
%
%

\begin{proposition}
\label{prop_*resolution}
Let $(X/Z,B)$ be a GLC pair such that the induced morphism $f\colon X\to Z$ is a projective contraction. 

Then there exist a commutative diagram 
\[
\begin{tikzcd}
  X' \arrow[r, "\beta"] \arrow[d, "f'"'] & X \arrow[d, "f"] \\
  Z' \arrow[r, "\alpha"] &  Z
 \end{tikzcd}
\]
where $\alpha$ and $\beta$ are birational projective morphisms and a $\mathbb R$-divisor $B'$ on $X'$ such that 
\begin{enumerate}
\item $X'$ is $\mathbb Q$-factorial and $f'$ is an equidimensional contraction;
\item $(X'/Z',B')$ satisfies Property $(*)$; and
\item $K_{X'}+B' = \beta^*(K_X+B)+F$, where $F\ge 0$ is vertical with respect to $f'$. 
\end{enumerate}
\end{proposition}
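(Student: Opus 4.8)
The strategy is to combine weakly semistable reduction (Theorem~\ref{t_AK}) with Proposition~\ref{p_toroidal}: the idea is to pass to a toroidal equidimensional model of $f$ and then enlarge the crepant transform of $B$ by an effective vertical divisor so as to force Property~$(*)$. Apply Theorem~\ref{t_AK} to $(X,B)$ and $f$. This produces birational projective morphisms $\alpha\colon Z'\to Z$ and $\beta\colon X'\to X$, toroidal pairs $(X',\Sigma_{X'})$ and $(Z',\Sigma_{Z'})$ with $(Z',\Sigma_{Z'})$ log smooth, and an equidimensional toroidal contraction $f'\colon X'\to Z'$, such that $\Supp(\beta^{-1}_*B+\Exc\beta)\subseteq\Sigma_{X'}$ and $X'$ has quotient toric singularities; in particular $X'$ is $\mathbb Q$-factorial, so (1) holds. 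Write $K_{X'}+\tilde B=\beta^*(K_X+B)$ and let $\tilde B=\tilde B_{\mathrm h}+\tilde B_{\mathrm v}$ be its decomposition into horizontal and vertical parts with respect to $f'$. Since $\Supp\tilde B$ is contained in $\beta^{-1}_*B\cup\Exc\beta$, hence in $\Sigma_{X'}$, every component of $\tilde B_{\mathrm v}$ is a vertical prime divisor contained in $\Sigma_{X'}$; because $f'$ is toroidal, the $f'$-image of such a divisor lies in $\Sigma_{Z'}$, so $\Supp\tilde B_{\mathrm v}\subseteq f'^{-1}(\Sigma_{Z'})\subseteq\Sigma_{X'}$ (the last inclusion again using that $f'$ is toroidal). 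I would then set
\[
B'\coloneqq\tilde B_{\mathrm h}+f'^{-1}(\Sigma_{Z'}),\qquad F\coloneqq B'-\tilde B=f'^{-1}(\Sigma_{Z'})-\tilde B_{\mathrm v},
\]
so that $F$ is vertical with respect to $f'$ and $K_{X'}+B'=\beta^*(K_X+B)+F$.

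To obtain (2) I would invoke Proposition~\ref{p_toroidal} for $(X',\Sigma_{X'})$, $(Z',\Sigma_{Z'})$, $f'$ and $B'$. One checks that $\Supp B'\subseteq\Sigma_{X'}$ (the horizontal part lies in $\beta^{-1}_*B\cup\Exc\beta\subseteq\Sigma_{X'}$, and $f'^{-1}(\Sigma_{Z'})\subseteq\Sigma_{X'}$); the vertical part of $B'$ equals $f'^{-1}(\Sigma_{Z'})$ by construction; and $(X',B')$ is log canonical, since $(X',\Sigma_{X'})$ is toroidal and all coefficients of $B'$ are at most $1$ — those of the vertical part are $0$ or $1$, and those of the horizontal part are the coefficients of $\tilde B_{\mathrm h}$, which are $\le 1$ because $(X,B)$ is log canonical over the generic point of $Z$ and every horizontal prime divisor over $X'$ has centre on $X$ dominating $Z$. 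Proposition~\ref{p_toroidal} then gives that $(X'/Z',B')$ satisfies Property~$(*)$, which is (2).

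It remains to prove (3), i.e.\ $F=f'^{-1}(\Sigma_{Z'})-\tilde B_{\mathrm v}\ge 0$. The support containment $\Supp\tilde B_{\mathrm v}\subseteq f'^{-1}(\Sigma_{Z'})$ is already established, so (3) is equivalent to the coefficient bound $m_S\tilde B\le 1$ for every vertical prime component $S$ of $\tilde B$. I expect this to be the main obstacle: the coefficient of $\tilde B$ along the strict transform of a vertical component of $B$ is its original coefficient, and along a vertical $\beta$-exceptional prime divisor $E$ it equals $1-a(E;X,B)$, so the bound says precisely that these vertical divisors have non-negative log discrepancy with respect to $(X,B)$. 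This is where one uses that $(X,B)$ may be assumed log canonical (not merely generically so) near the union of these vertical centres — something one arranges at the outset by a preliminary birational modification together with the enlargement of $B$ by an effective vertical divisor built into the set-up — whereupon all log discrepancies are $\ge 0$, hence $\tilde B_{\mathrm v}\le f'^{-1}(\Sigma_{Z'})$. Granting $m_S\tilde B\le 1$ for all such $S$ we conclude $F\ge 0$, which gives (3) and completes the proof.
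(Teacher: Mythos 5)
Your approach is exactly the one the paper takes (its proof is the one-liner ``follows from Theorem~\ref{t_AK} and Proposition~\ref{p_toroidal}''), and your construction of $B'=\tilde B_{\mathrm h}+f'^{-1}(\Sigma_{Z'})$, your verification of the hypotheses of Proposition~\ref{p_toroidal} for (2), and your reduction of (3) to the coefficient bound $m_S\tilde B\le 1$ on vertical prime components are all correct.

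The point you flag in (3) is genuine: if the pair is only GLC, a vertical component of $B$ may have coefficient $>1$, or $(X,B)$ may have a vertical non-lc place, and then $F=f'^{-1}(\Sigma_{Z'})-\tilde B_{\mathrm v}$ is not effective. In fact the conclusion itself then fails: from $B'=\tilde B+F$ with $F\ge 0$ one would get $B'\ge\tilde B$, so the strict transform of such a component would appear in $B'$ with coefficient $>1$, contradicting the log canonicity of $(X',B')$ that Property~$(*)$ forces (Lemma~\ref{l_star}). However, your proposed remedy --- ``a preliminary birational modification together with the enlargement of $B$ by an effective vertical divisor'' --- cannot work: crepant pullback preserves log discrepancies, and adding an effective divisor to $B$ only makes them smaller, so neither device turns a vertical non-lc place into an lc one. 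The correct resolution is simply to observe that the statement implicitly requires $(X,B)$ to be log canonical along the vertical locus (not merely GLC); in every place the paper invokes Proposition~\ref{prop_*resolution} the input pair is in fact log canonical (e.g.\ it satisfies Property~$(*)$ or is dlt), so your proof, with that hypothesis made explicit in place of your final fix paragraph, gives precisely what is needed.
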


Note that, using the same notation as Proposition \ref{prop_*resolution}, it follows that  $(X,B)$ and $(X',B')$ are crepant over the generic point of $Z$.

\begin{proof} 
The claim follows easily from Theorem \ref{t_AK} and Proposition \ref{p_toroidal}.
\end{proof}


%

\begin{proposition}
\label{prop_MMP_preserves_*}
Let $(X/Z,B)$ be a GLC pair satisfying Property $(*)$  and such that $B\ge0$.   Let $\phi\colon X\dashrightarrow Y$ be a sequence of steps of the $(K_X+B)$-MMP over $Z$. Let $C=\phi_*B$ and let $f\colon X\to Z$ and $g\colon Y\to Z$ be the induced contractions. 

Then $(Y/Z,C)$ satisfies Property $(*)$ and for any closed point $z\in Z$, the map $\phi^{-1}$ is an isomorphism along the generic point of any irreducible component of $g^{-1}(z)$. In particular, if $f$ is equidimensional, then $g$ is also equidimensional. 
\end{proposition}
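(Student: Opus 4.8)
The plan is to verify Property $(*)$ for $(Y/Z,C)$ step by step along the MMP, so it suffices to treat a single step $\phi\colon X\dashrightarrow Y$ of the $(K_X+B)$-MMP over $Z$ (either a divisorial contraction or a flip), and then induct. Since $(X/Z,B)$ satisfies Property $(*)$, let $\Sigma_Z$ be the associated log smooth divisor on $Z$; I claim the same $\Sigma_Z$ works for $(Y/Z,C)$. First I would record the key numerical input: every step of a $(K_X+B)$-MMP \emph{over} $Z$ contracts only curves mapping to points of $Z$, hence is an isomorphism over the generic point of $Z$; consequently the horizontal part of $B$ is unaffected, and the vertical part of $C=\phi_*B$ must still be supported on $g^{-1}(\Sigma_Z)$ — one only needs to check no new vertical divisor of coefficient not equal to the $f^{-1}(\Sigma_Z)$-coefficient is created, which cannot happen since $\phi_*$ of a prime divisor is either prime or zero, and the flipped/extracted locus of a step over $Z$ lies over a proper closed subset of $Z$; combined with the fact (to be argued) that $Y\to Z$ is still equidimensional outside $\Sigma_Z$, one deduces the vertical part of $C$ equals $g^{-1}(\Sigma_Z)$, giving condition (1) of Definition \ref{d_property*}.

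For condition (2), fix a closed point $z\in Z$ and a reduced divisor $\Sigma\ge\Sigma_Z$ with $(Z,\Sigma)$ log smooth near $z$; I must show $(Y,C+g^*(\Sigma-\Sigma_Z))$ is log canonical around $g^{-1}(z)$. The point is that $\phi$ is also a sequence of steps of the $(K_X+B+f^*(\Sigma-\Sigma_Z))$-MMP over $Z$: indeed $f^*(\Sigma-\Sigma_Z)$ is a pullback from $Z$, so it is numerically trivial on every curve contracted over $Z$, hence the cone of $(K_X+B)$-negative extremal rays over $Z$ coincides with that of $(K_X+B+f^*(\Sigma-\Sigma_Z))$, and each contraction/flip is simultaneously a step for both. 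Now $(X,B+f^*(\Sigma-\Sigma_Z))$ is log canonical around $f^{-1}(z)$ by Property $(*)$ for $(X/Z,B)$, and since steps of the MMP for a log canonical pair preserve (indeed improve) log canonicity, $(Y,C+g^*(\Sigma-\Sigma_Z))$ is log canonical around $g^{-1}(z)$. This gives condition (2), so $(Y/Z,C)$ satisfies Property $(*)$.

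It remains to prove the statement about fibres. Fix $z\in Z$ and an irreducible component $G$ of $g^{-1}(z)$ with generic point $\eta_G$. If $z\in\Sigma_Z$, choose $\Sigma\ge\Sigma_Z$ making $z$ a zero-dimensional stratum; if $z\notin\Sigma_Z$, enlarge $\Sigma_Z$ by $\dim Z$ general hypersurfaces through $z$ as in the proof of Lemma \ref{l_star}(3). Either way, by Property $(*)$ just established for $(Y/Z,C)$ together with Lemma \ref{l_fibre} (applied to $(Y,C+g^*(\Sigma-\Sigma_Z))$, using $C\ge 0$), every component of $g^{-1}(z)$ has dimension $\dim Y-\dim Z=\dim X-\dim Z$; in particular, if $f$ is equidimensional then so is $g$. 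Finally, to see $\phi^{-1}$ is an isomorphism near $\eta_G$: by Lemma \ref{l_lcc} applied to $(X/Z,B)$ and to $(Y/Z,C)$, the relevant fibre components are log canonical centres of the respective pairs $(X,B+f^*(\Sigma-\Sigma_Z))$ and $(Y,C+g^*(\Sigma-\Sigma_Z))$; since the MMP for the latter pair does not contract, flip, or extract any log canonical centre of a log canonical pair, $\phi$ restricts to an isomorphism of the open sets where it is defined on these centres, and in particular is a local isomorphism at $\eta_G$.

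\medskip
\noindent\emph{Main obstacle.} The delicate point is the fibre-dimension and non-contraction assertion for $\phi^{-1}$ at $\eta_G$: one must rule out that a step of the MMP extracts a divisor contained in some $g^{-1}(z)$, which would create a new vertical component of $C$ outside $g^{-1}(\Sigma_Z)$ and destroy equidimensionality. Controlling this requires carefully combining Lemma \ref{l_fibre}, Lemma \ref{l_lcc}, and the fact that a $(K_X+B)$-flip of a log canonical pair does not extract log canonical centres; making this argument airtight — especially handling the case $z\notin\Sigma_Z$, where the auxiliary general hypersurfaces must be chosen uniformly enough to apply Lemma \ref{l_fibre} on $Y$ — is the technical heart of the proof.
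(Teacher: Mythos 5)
Your first half is essentially the paper's argument: you verify condition (1) of Property $(*)$ for $(Y/Z,C)$ by pushing forward, and condition (2) by observing that $f^*(\Sigma-\Sigma_Z)$ is numerically trivial over $Z$, so $\phi$ is also a run of the $(K_X+B+f^*(\Sigma-\Sigma_Z))$-MMP and log canonicity is preserved around $f^{-1}(z)$. This matches the paper exactly. Your route to equidimensionality via Lemma~\ref{l_fibre} is a small but valid detour; the paper instead deduces it as a corollary of the local-isomorphism claim.

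The genuine gap is in the final step. You justify that $\phi^{-1}$ is an isomorphism at $\eta_G$ by asserting that ``the MMP for the latter pair does not contract, flip, or extract any log canonical centre of a log canonical pair.'' That statement, taken at face value, is false: a step of a $(K_X+\Delta)$-MMP can certainly contract a divisorial log canonical centre (a component of $\lfloor\Delta\rfloor$ spanning a negative extremal ray), and flipping curves can lie inside log canonical centres. What actually makes the argument work is a discrepancy comparison plus the negativity lemma, which is what the paper invokes. Concretely: by Lemma~\ref{l_lcc}, $G$ is an lc centre of $(Y,C+g^*(\Sigma-\Sigma_Z))$, so some divisorial valuation $E$ over $\eta_G$ has log discrepancy $0$ for that pair; since the map is over $Z$, the centre of $E$ on $X$ lies in $f^{-1}(z)$, where $(X,B+f^*(\Sigma-\Sigma_Z))$ is log canonical, so the log discrepancy of $E$ on the $X$-side is also $\ge 0$; but a step of the MMP can only increase discrepancies, so equality holds. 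Plugging this into the negativity lemma (\cite[Lemma 3.38]{KM98}) on a common resolution $p\colon W\to X$, $q\colon W\to Y$ shows that $p^*(K_X+B+f^*(\Sigma-\Sigma_Z))-q^*(K_Y+C+g^*(\Sigma-\Sigma_Z))$ vanishes along $q^{-1}(\eta_G)$, which is exactly what forces $\phi^{-1}$ to be an isomorphism at $\eta_G$. You correctly flag this as the ``technical heart,'' but the heuristic you offer in its place is not a substitute and is not true as stated.
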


\begin{proof}
We first prove that $(Y/Z,C)$ satisfies Property $(*)$. Let $\Sigma_Z$ be the induced divisor. 
Since the vertical part of $B$ coincides with $f^{-1}(\Sigma_Z )$  and $C=\phi_*B$, it follows that
 the vertical part of $C$ coincides with  $g^{-1}(\Sigma_Z)$.
 
 Let $z\in Z$ be a closed point and let $\Sigma\ge \Sigma_Z$ be a reduced divisor such that $(Z,\Sigma)$ is log smooth around $z$. Then $(X,B+f^*(\Sigma-\Sigma_Z))$ is log canonical around $f^{-1}(z)$ and since $\phi\colon X\dashrightarrow Y$ is also a sequence of steps for the $(K_X+B+f^*(\Sigma-\Sigma_Z))$-MMP, it follows that $(Y,C+g^*(\Sigma-\Sigma_Z))$ is log canonical around $g^{-1}(z)$.  Thus, $(Y/Z,C)$ satisfies Property $(*)$.

Now, let $z\in Z$ be a closed point and let  $G$ be an irreducible component of $g^{-1}(z)$. Let $\Sigma\ge \Sigma_Z$ be a reduced divisor such that $(Z,\Sigma)$ is log smooth and $z$ is a stratum of $\Sigma$.  Then, Lemma \ref{l_lcc} implies that $G$ is a log canonical centre of $(Y,C+g^*(\Sigma-\Sigma_Z))$. On the other hand, Property $(*)$ implies that $(X,B+f^*(\Sigma-\Sigma_Z))$ is log canonical around $f^{-1}(z)$. Thus, the negativity lemma (cf.  \cite[Lemma 3.38]{KM98}) implies that $\phi^{-1}$ is an isomorphism along the generic point of $G$, as claimed. 
\end{proof}

Note that if $(X/Z, B)$ is a GLC pair satisfying Property $(*)$, then in general it is not the case
that $(X/Z, B+D)$ satisfies Property $(*)$ for some ample $\mathbb Q$-divisor $D\ge 0$, as the following example shows.
Consider the projection $\pi\colon \mathbb P^2\times \mathbb P^1 \to \mathbb P^2$, let $Z=\mathbb P^2$ 
and let $\ell\subset Z$ be a line.  
Then $\pi^{-1}(\ell)$ is naturally isomorphic to $\mathbb P^1\times \mathbb P^1$. Let $X$ be the blow up of $\mathbb P^3$ along the diagonal of $\pi^{-1}(\ell)$ and let  $f\colon X \rightarrow Z$ be the induced morphism. Let $B = f^{-1}(\ell)$ and let $S \subset X$ denote the singular locus of $B$.  Then, it is easy to check that $(X/Z,B)$ is a GLC pair satisfying Property $(*)$, but 
 $(X/Z, B+D)$ fails to satisfy Property $(*)$ for any ample $\mathbb Q$-divisor $D\ge 0$. Indeed, any horizontal component of $\Supp D$  intersects $S$ at some closed point $x\in S$ and condition (2) of Definition \ref{d_property*} does not hold for $z=f(x)$. 
 
However, we do have a weak form of a Bertini-type result:

\begin{proposition}
\label{p_prop*_bertini}
Let $(X/Z,B)$ be a GLC pair satisfying Property $(*)$,  such that $B\ge0$ and the induced morphism $f\colon X\to Z$ is a projective contraction.
Let $A$ be an ample $\mathbb Q$-divisor and let $z \in Z$ be a closed point.

Then $(X/Z, B+H)$ satisfies Property $(*)$ in a neighbourhood of $f^{-1}(z)$
for a general choice of $0 \leq H \sim_{\mathbb Q} A$. 
\end{proposition}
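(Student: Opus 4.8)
The plan is to check conditions (1) and (2) of Definition \ref{d_property*} for $(X/Z,B+H)$, with the same divisor $\Sigma_Z$, in a neighbourhood of $f^{-1}(z)$. Condition (1) is immediate. Write $A\sim_{\mathbb Q}\frac1{m_0}A_0$ with $A_0$ very ample and base point free, so that a general $0\le H\sim_{\mathbb Q}A$ has the form $H=\frac1{m_0}H_0$ with $H_0\in|A_0|$ general; such an $H$ is irreducible and has no vertical component (if $\dim X>\dim Z$, any effective divisor $\mathbb Q$-linearly equivalent to the ample class $A$ has positive intersection with curves in a general fibre of $f$, hence dominates $Z$; if $\dim X=\dim Z$ then $f$ is birational and a general member of the movable system $|A_0|$ is not contracted). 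Hence the vertical part of $B+H$ is still $f^{-1}(\Sigma_Z)$, and it remains to verify condition (2) around $f^{-1}(z)$.

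Since $f$ is proper, verifying (2) around $f^{-1}(z)$ only involves a neighbourhood of $z$ in $Z$, so we may shrink $Z$ around $z$; set $m=\dim Z$ and let $k$ be the number of components of $\Sigma_Z$ through $z$. If $\Sigma\ge\Sigma_Z$ is reduced with $(Z,\Sigma)$ log smooth around $z$, then near $z$ the divisor $\Sigma-\Sigma_Z$ is a sum of at most $m-k$ smooth prime divisors through $z$; components not through $z$ do not affect a neighbourhood of $f^{-1}(z)$, and, as $B+H\ge 0$, enlarging the boundary can only destroy log canonicity, so it suffices to treat $\Sigma-\Sigma_Z=\sum_{i=1}^{m-k}\Sigma_i$ with $(Z,\Sigma_Z+\sum_i\Sigma_i)$ log smooth and $z$ (the generic point of) a deepest stratum. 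For \emph{one} such configuration the verification is a soft Bertini computation on a single log resolution: by Property $(*)$ for $(X/Z,B)$ the pair $(X,B+\sum_i f^*\Sigma_i)$ is log canonical around $f^{-1}(z)$, so if $\pi\colon W\to X$ is a log resolution of it and $K_W+\Gamma=\pi^*(K_X+B+\sum_if^*\Sigma_i)$ then the coefficients of $\Gamma$ over $f^{-1}(z)$ are $\le 1$; for general $H_0\in|A_0|$ one has $\pi^*H_0=\widetilde{H_0}$ with $\widetilde{H_0}$ smooth, not a component of $\Supp\Gamma\cup\Exc\pi$, and $\widetilde{H_0}+\Supp\Gamma+\Exc\pi$ simple normal crossings, so $\pi$ is also a log resolution of $(X,B+\sum_if^*\Sigma_i+H)$ and $K_W+\Gamma+\tfrac1{m_0}\widetilde{H_0}=\pi^*(K_X+B+\sum_if^*\Sigma_i+H)$ has all coefficients over $f^{-1}(z)$ at most $\max\{1,\tfrac1{m_0}\}=1$; thus $(X,B+\sum_if^*\Sigma_i+H)$ is log canonical around $f^{-1}(z)$.

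It remains to make a single $H$ work for \emph{all} configurations $\{\Sigma_i\}$ at once. The plan is to package these into a bounded family: after shrinking $Z$ around $z$, the possible $\Sigma-\Sigma_Z$ are unions of smooth prime divisors through $z$ transverse to $\Sigma_Z$, and one checks — because only finitely much jet data of the $\Sigma_i$ at $z$ enters a log resolution of $(X,B+\sum_if^*\Sigma_i)$ near $f^{-1}(z)$, or, alternatively, by replacing each $\Sigma_i$ with a general member through $z$ of a sufficiently positive fixed very ample linear system — that it suffices to let $\{\Sigma_i\}$ range over a family parametrised by a quasi-projective variety $\mathcal U$, carrying a divisor $\mathbf\Sigma$ on $\mathcal U\times Z$ which is flat with reduced fibres over $\mathcal U$. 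Forming the pair $(\mathcal U\times X,\operatorname{pr}_X^*B+(\operatorname{id}\times f)^*\mathbf\Sigma)$, stratifying $\mathcal U$ into finitely many locally closed pieces over each of which a log resolution restricts to a log resolution on every fibre, and applying Bertini to the base point free system $\operatorname{pr}_X^*|A_0|$ over each piece, one obtains a dense open subset of $|A_0|$ of divisors $H_0$ for which the computation of the previous paragraph is valid for every configuration in that piece; intersecting the finitely many such open sets yields an $H_0$ that works for all $\{\Sigma_i\}$ simultaneously, whence $(X/Z,B+\tfrac1{m_0}H_0)$ satisfies Property $(*)$ around $f^{-1}(z)$.

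The main obstacle is exactly this uniformity in the auxiliary divisor: Definition \ref{d_property*}(2) quantifies over all log smooth completions $\Sigma\ge\Sigma_Z$, a family which is \emph{a priori} unbounded, whereas log canonicity can jump along special members, so a naive application of Bertini handles only a general completion. Once the reduction to a family parametrised by a variety is in place, the per-configuration statement is the elementary Bertini computation above and the rest is a routine stratification argument; carrying out that reduction — that is, bounding the relevant completions and arranging that a single resolution over the parameter space restricts fibrewise on each stratum — is where the real work lies.
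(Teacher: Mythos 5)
There is a genuine gap, and you have correctly diagnosed it yourself: the reduction of the (a priori unbounded) family of log smooth completions $\Sigma\ge\Sigma_Z$ to a bounded, quasi-projectively parametrised family is precisely the content of the proposition, and your proposal does not carry it out. Neither of the two routes you sketch is justified. The claim that ``only finitely much jet data of the $\Sigma_i$ at $z$ enters'' is plausible as a heuristic but is not obvious and is not proved; the interaction between $f^*\Sigma_i$ and the fibre $f^{-1}(z)$ can in principle be sensitive to arbitrarily high-order data of $\Sigma_i$, and you would need an argument (e.g.\ via boundedness of the discrepancy thresholds) to truncate. The alternative, ``replacing each $\Sigma_i$ with a general member through $z$ of a sufficiently positive fixed very ample linear system,'' does not obviously suffice: log canonicity of $(X,B+H+f^*\Sigma)$ is a closed condition on $\Sigma$, so verifying it for a general member of a family says nothing about special members, and the $\Sigma_i$ required by Definition~\ref{d_property*}(2) are arbitrary, not general. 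Your final paragraph concedes that this reduction is ``where the real work lies,'' which in this case means the proof is incomplete.

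The paper avoids boundedness entirely. It fixes a single reduced $P$ through $z$ for which $z$ is a deepest stratum of $(Z,\Sigma_Z+P)$, chooses $H$ general so that $(X,B+H+f^*P)$ is log canonical around $f^{-1}(z)$ (the easy Bertini step you also have), and then shows by induction that the \emph{same} $H$ works for every other admissible $P'$, swapping one component $P_1$ of $P$ for a component $P'_1$ of $P'$ at a time. The mechanism is repeated adjunction along the filtration $Z_1\subset\cdots\subset Z_m=Z$ of strata of $\Sigma_Z+\sum_{i\ge 2}P_i$ through $z$ and the corresponding tower $X_\ell$ of normalised preimages, which are log canonical centres by Lemma~\ref{l_lcc} and \cite[Theorem~1.1]{Ambro11}. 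The crucial observation is that $Z_1$ is a curve through $z$ and both $P_1$ and $P'_1$ meet it transversely, so after shrinking $Z$ one has $P_1|_{Z_1}=P'_1|_{Z_1}$, hence $f^*P_1|_{X_1}=f^*P'_1|_{X_1}$. Adjoining down to $F=$ normalisation of $f^{-1}(z)$ with $P_1$, then reading the same residual pair as coming from $P'_1$ and applying inversion of adjunction back up the tower, gives log canonicity of $(X,\Gamma+f^*P'_1)$ near $f^{-1}(z)$. This sidesteps any need to bound the family of completions: all the information that distinguishes $P_1$ from $P'_1$ near $f^{-1}(z)$ collapses upon restriction to the one-dimensional stratum $Z_1$. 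If you want to salvage your approach you would have to prove the bounded-family reduction, which appears to be at least as hard as the adjunction argument; I would recommend adopting the latter.
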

\begin{proof}
Note that if $H$ is horizontal with respect to $f$ then $(X/Z, B+H)$ satisfies condition (1)
 of Definition \ref{d_property*}.  We will now show that  we can arrange for condition (2) to hold  in a neighbourhood of 
$f^{-1}(z)$.

We fix a reduced divisor $P\ge 0$ on $Z$, not containing any component of $\Sigma_Z$
such that $(Z, \Sigma_Z+P)$ is log smooth at $z$ and $z$ is a log canonical centre of $(Z,\Sigma_Z+P)$.   By assumption, $(X, B+f^*P)$ is log canonical around $f^{-1}(z)$. 
Let  $0\le H\sim_{\mathbb Q}A$ be sufficiently general so that $(X, B+H+f^*P)$ is log canonical around $f^{-1}(z)$.
We claim that if 
$P'\ge 0$ is any other reduced divisor, not containing any component of $\Sigma_Z$
and such that $(Z, \Sigma_Z+P')$ is log smooth at $z$ then $(X, B+H+ f^*P')$ is log canonical around $f^{-1}(z)$. The claim immediately implies that 
$(X/Z, B+H)$ satisfies Property $(*)$ around $f^{-1}(z)$. Thus, our result follows.

We now prove the claim by proceeding by induction on the number $k$ of components of $P$ which are not components of $P'$. If $k=0$ then there is nothing to prove. Thus, we may assume that $k\ge 1$. Let $P_1,\dots,P_q$ be the components of $P$. Then, after possibly enlarging $P'$, 
we may assume that  there exists a component $P'_1$ of $P'$ which is not contained in $P$ and such that if
\[\overline {P}=P'_1+\sum_{i=2}^qP_i,
\]
then $(Z,\Sigma_Z+\overline P)$ is log smooth at $z$. 
Let $m=\dim Z$ and let 
\[
Z_1\subset Z_2\subset \dots \subset Z_m= Z
\]
be a sequence of log canonical centres  of $(Z, \Sigma_Z+\sum_{i = 2}^{q}P_i)$ containing $z$ with $\dim Z_\ell = \ell$, 
for $\ell=1,\dots,m$. Let $\Gamma\coloneqq B+H+f^*(\sum_{i=2}^q P_i)$.  After possibly shrinking $Z$, we may assume that $P_1\vert_{Z_1} = P'_1\vert_{Z_1}$.

Note that, for any $\ell=1,\dots,m-1$, we have that $f^{-1}(Z_\ell)$ is an intersection of  log canonical centres of $(X,\Gamma)$ of codimension one and, by 
 \cite[Theorem 1.1]{Ambro11},
 it is therefore a log canonical centre. Let $X_\ell$ denote its normalisation. We have morphisms $\iota_\ell\colon X_\ell \rightarrow X_{\ell+1}$
which commute with the  inclusion $f^{-1}(Z_\ell)\hookrightarrow f^{-1}(Z_{\ell+1})$. Let 
$\Gamma_m=\Gamma$. By repeatedly applying adjunction (e.g. see \cite[Proposition 4.9]{Kollar13}), it follows that for each $\ell=1,\dots,m-1$, there exist a $\mathbb Q$-divisor $\Gamma_\ell$ on $X_\ell$, such that  $\iota_\ell(X_\ell)$
is  an union of log canonical centres of $(X_{\ell+1}, \Gamma_{\ell+1})$ of codimension at most one and
\[
(K_{X_{\ell+1}}+\Gamma_{\ell+1})\vert_{X_\ell} = K_{X_\ell}+\Gamma_\ell.
\] 
In particular, we have 
$(K_X+\Gamma)\vert_{X_\ell} = K_{X_\ell}+\Gamma_\ell$, for any $\ell=1,\dots,m$.  
Then $(X,\Gamma +f^*P_1)$ is log canonical along $f^{-1}(z)$ and $f^{-1}(z)$ is a union of 
log canonical centres of $(X, \Gamma+f^*P_1)$ around $f^{-1}(z)$. 
Let $F$ be the normalisation of $f^{-1}(z)$. Then there exists an induced morphism $j_\ell\colon F\to X_{\ell}$. By adjunction,
it follows that 
$(X_{\ell},\Gamma_\ell+ f^*P_1|_{X_{\ell}})$
 is log canonical for each $\ell=1,\dots,m$ and $j_\ell(F)$ is a log canonical centre of $(X_{\ell},\Gamma_\ell+ f^*P_1|_{X_{\ell}})$.
We may write
\[K_F + \Gamma_F = (K_{X_1}+\Gamma_1+f^*P_1|_{X_1})\vert_F\]
where  $(F, \Gamma_F)$ is log canonical.

We have $f^*P_1\vert_{X_1} = f^*P'_1\vert_{X_1}$, and so it follows that 
\[(K_{X_1}+\Gamma_1+f^*P'_1\vert_{X_1})\vert_F = K_F+\Gamma_F,\]
in particular, adjunction
implies that $(X_1, \Gamma_1+f^*P'_1|_{X_1})$ is log canonical in a neighbourhood of 
$f^{-1}(z)$. By repeatedly applying adjunction again, it follows that $(X_{\ell}, \Gamma_\ell +f^*P'_1|_{X_\ell})$ is log canonical for each $\ell =1,\dots, m$ and, in particular, it follows that $(X, \Gamma+f^*P'_1)$ is log canonical. 
We have that $\Gamma+f^*P'_1=B+H+f^*\overline P$ and so we may replace $P$ by $\overline P$ and the result follows by induction on $k$. 
\end{proof}

\subsection{Maximal moduli}

\begin{definition}\label{d_maximal}
Let $(X/Z,B)$ be a GLC pair induced by a contraction $f\colon X\to Z$. $(X/Z,B)$ is said to have {\bf maximal moduli} if for any GLC pair $(X'/Z,B')$ with induced contraction $f'\colon X'\to Z$ birationally equivalent to $X$ and such that $K_{X'}+B'$ is $f'$-nef  and $(X,B)$ and $(X',B')$ are crepant over the generic point of $Z$, there exists a normal variety $W$ and a $\mathbb R$-divisor $D$ on $W$ such that 
\[
p^*M_X- q^*M_{X'}\sim D\ge 0
\]
where $M_X$ and $M_{X'}$ are the moduli part of $(X/Z,B)$ and $(X'/Z,B')$ respectively and $p\colon W\to X$ and $q\colon W\to Y$ are morphisms which  resolve the indeterminacy of the induced map $X\dashrightarrow X'$. 
\end{definition}

\begin{proposition}\label{p_maximal}
Let $(X/Z,B)$ and $(X'/Z',B')$ be  GLC pairs such that the induced morphisms $f\colon X\to Z$ and $f'\colon X'\to Z'$ are birational equivalent projective contractions. Let $\alpha\colon Z'\dashrightarrow Z$ and $\beta\colon X'\dashrightarrow X$ be the induced birational maps 
and let $p\colon W\to X$ and $q\colon W\to X'$ be a resolution of indeterminacy  of $\beta$. Let $M_X$ and $M_{X'}$  be the moduli part of $(X/Z,B)$ and $(X'/Z',B')$ respectively. 
Assume that 
\begin{enumerate}
\item $(X/Z,B)$ satisfies Property $(*)$ and $B\ge 0$; 
\item $(X'/Z',B')$ is BP stable over $Z'$ and $M_{X'}$ is nef; and 
\item 
$(X,B)$ and $(X',B')$ are crepant over the generic point of $Z$.
\end{enumerate}

Then there exists a $\mathbb R$-divisor $D$ on $W$ such that 
\[p^*M_X- q^*M_{X'}\sim D\ge 0.\]
\end{proposition}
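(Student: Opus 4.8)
The plan is to pull both moduli parts back to a common model and reduce the statement to a sign analysis of vertical divisors over the base.

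First I would fix a simultaneous resolution: choose a normal variety $Z''$ with projective birational morphisms $g\colon Z''\to Z$ and $g'\colon Z''\to Z'$ resolving $\alpha$, and enlarge $W$ so that there is $h\colon W\to Z''$ with $f\circ p=g\circ h$ and $f'\circ q=g'\circ h$. By Lemma \ref{l_star}, $(X,B)$ is log canonical and $B_Z=\Sigma_Z$; moreover Property $(*)$ forces $(X/Z,B)$ to be BP stable over $Z$ (apply condition (2) of Definition \ref{d_property*} on blow-ups of $Z$, together with Remark \ref{r_induced}; this is part of the general theory of toroidal fibrations). Hence, by Remark \ref{r_induced}(2), $M_X=K_X+B-f^*(K_Z+\Sigma_Z)$, and pulling back,
\[
p^*M_X=K_W+B_W-h^*(K_{Z''}+\Theta),
\]
where $K_W+B_W=p^*(K_X+B)$ and $K_{Z''}+\Theta=g^*(K_Z+\Sigma_Z)$; here $\Theta$ is also the discriminant over $Z''$ of the pair $(W/Z'',B_W)$ induced from $(X/Z,B)$. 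Likewise, since $(X'/Z',B')$ is BP stable over $Z'$, $M_{X'}=K_{X'}+B'-f'^*(K_{Z'}+B'_{Z'})$ and $q^*M_{X'}=K_W+B'_W-h^*(K_{Z''}+\Theta')$, with $K_W+B'_W=q^*(K_{X'}+B')$ and $K_{Z''}+\Theta'=g'^*(K_{Z'}+B'_{Z'})$ the discriminant over $Z''$ of $(W/Z'',B'_W)$. Subtracting,
\[
p^*M_X-q^*M_{X'}=(B_W-B'_W)-h^*(\Theta-\Theta').
\]
By hypothesis (3) the divisor $B_W-B'_W$ is vertical over $Z$, hence over $Z''$, so the whole difference is vertical over $Z$; in particular it has no component dominating $Z''$, and since the horizontal-over-$Z''$ coefficients of $B_W$ and $B'_W$ agree (they are read off from the generic fibre, where the two pairs are crepant) it suffices to bound the vertical-over-$Z''$ part.

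Next I would argue prime by prime over $Z''$. Since $B_W-B'_W$ is vertical over $Z''$, the discriminants $\Theta$ and $\Theta'$ differ only along primes $P\subset Z''$ that are dominated by a component of $B_W-B'_W$; the components of $B_W-B'_W$ whose image has codimension $\ge2$ receive no contribution from $h^*(\Theta-\Theta')$, and their non‑negativity follows from log canonicity of $(X,B)$ together with Property $(*)$. Over a fixed prime $P$: the coefficient of the moduli part of a BP stable pair along any prime divisor vertical over $P$ is non‑positive (the standard ``$\mu_P\le1$'' computation underlying the canonical bundle formula), so both $p^*M_X$ and $q^*M_{X'}$ contribute $\le0$ over $P$; the content is that the $X$-side contribution is the larger one. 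This is where Property $(*)$ enters: $\Theta$ is the crepant pull-back of the \emph{log smooth} pair $(Z,\Sigma_Z)$, so its coefficients are $\le1$ with equality exactly along log canonical places, while Definition \ref{d_property*}(2) pins the vertical part of $B_W$ over $P$ to be the full reduced preimage when $P$ lies over $\Sigma_Z$ and to be as small as possible otherwise — this makes the vertical part of $p^*M_X$ over $P$ maximal (closest to $0$) among all crepant‑over‑$\eta_Z$ models. For the $X'$-side I would invoke the nefness of $M_{X'}$: $q^*M_{X'}$ is nef, hence nef over $Z''$, and applying the negativity lemma to $-q^*M_{X'}$ over $Z''$ (using again that its horizontal‑over‑$Z''$ part coincides with that of $p^*M_X$) bounds the vertical part of $q^*M_{X'}$ over $P$ from above by that of $p^*M_X$. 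Combining the two bounds gives $p^*M_X-q^*M_{X'}\ge0$ over every $P$, hence $p^*M_X-q^*M_{X'}\ge 0$ on $W$, and one takes $D=p^*M_X-q^*M_{X'}$ for suitable representatives of the moduli parts.

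The hard part is precisely this last comparison — making rigorous that Property $(*)$ makes the vertical moduli part of $(X/Z,B)$ dominate that of an arbitrary crepant‑over‑$\eta_Z$, BP stable model with nef moduli part. I expect this to require, besides the ``$\mu_P\le1$'' estimate and the negativity lemma applied to $M_{X'}$, a preliminary reduction via Proposition \ref{prop_*resolution} and Proposition \ref{prop_MMP_preserves_*}: by passing to a further common $(*)$-modification one may arrange that all boundaries in sight are effective and that the crepant models in play dominate one another by morphisms, so that the coefficient inequalities needed over each $P$ — of the shape ``log discrepancy over $X$ $\ge$ ramification multiplicity $\times$ log discrepancy over $Z$'' — can be extracted directly from the toroidal, equidimensional structure of $f$.
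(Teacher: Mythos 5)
Your overall skeleton (pull both moduli parts to a common resolution $W$, write the difference as a vertical divisor, then prove effectivity coefficient by coefficient, concluding with nefness of $M_{X'}$ and the negativity lemma) is the same as the paper's, but there are three genuine problems in the details, and they are exactly at the places you flag as ``the hard part,'' so the proposal does not close the gap.

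First, the claim that ``Property $(*)$ forces $(X/Z,B)$ to be BP stable over $Z$'' is unproven and not true in this generality. It is precisely the point the paper goes out of its way to sidestep: in the actual proof, after writing $a^*(K_Z+B_Z)=K_Y+B_Y$, the paper notes explicitly that $(Y,B_Y)$ is \emph{not} necessarily the discriminant of $(W/Y,B_W)$. BP stability of a Property $(*)$ pair is established only later, in Theorem~\ref{t_*BP}, and there it needs additional hypotheses (equidimensionality, a log canonical foliated pair, and $f$-nefness of $K_X+B$), none of which are available in Proposition~\ref{p_maximal}. You can still write $p^*M_X = K_W+B_W - g^*(K_Y+B_Y)$ with $K_Y+B_Y=a^*(K_Z+B_Z)$, as the paper does, but you may not then interpret $B_Y$ as the discriminant of $(W/Y,B_W)$.

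Second, the negativity lemma is invoked ``over $Z''$'', but $W\to Z''$ is not birational, so the lemma simply does not apply there. In the paper the lemma is applied to the birational morphism $p\colon W\to X$: one first proves $m_{E'}A\ge 0$ for strict transforms $E'$ of vertical prime divisors $E\subset X$ (so $p_*A\ge 0$), and observes that $-A\equiv q^*M_{X'}-p^*M_X$ is $p$-nef because $M_{X'}$ is nef and $p^*M_X$ is $p$-trivial; the negativity lemma over $p$ then gives $A\ge 0$. The nefness of $M_{X'}$ enters only in this final step, not as a bound on vertical coefficients over individual primes $P\subset Z''$.

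Third, the coefficient comparison over each $P$ is where the real content lies and your sketch does not engage with the mechanism that makes it work. The ``$\mu_P\le 1$'' heuristic bounds a moduli divisor on the \emph{base}, not the divisor $M_X$ on $X$; vertical coefficients of $M_X$ need not be non-positive (the nefness of $M_X$ is, after all, the whole subject of the paper). The paper's actual argument splits into two cases for a vertical $E\subset X$ with $P=f(E)$: if $P\subset\Sigma_Z$, then Property $(*)$ gives $m_{E'}B_W=1$, and then one uses that $(X'/Z',B')$ BP stable implies (via Theorem~\ref{t_logstable}) that $(W/Y,B'_W)$ is \emph{log stable} over $Y$; since $(Y,B_Y)$ is log canonical and $B'_Y$ is the discriminant of $(W/Y,B'_W)$, $(W,B'_W+g^*(B_Y-B'_Y))$ is log canonical, which forces $m_{E'}(B'_W+g^*(B_Y-B'_Y))\le 1$ and hence $m_{E'}A\ge 0$. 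If $P\not\subset\Sigma_Z$, then $f$ is equidimensional there (Lemma~\ref{l_star}(3)), so $P$ is a divisor, Property $(*)$ pins $m_{E'}(B_W+g^*P')=1$ for the strict transform $P'$, and a direct computation with $\gamma_{P'}$ in the definition of the discriminant finishes. Your sketch replaces this with an appeal to further $(*)$-modifications and MMP, which is not needed and does not obviously terminate in the required inequality; the missing ingredient is precisely the use of log stability (Theorem~\ref{t_logstable}) for the BP stable side.
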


\begin{proof} After possibly replacing $M_X$ and $M_{X'}$ by linearly equivalent divisors, we may assume that $p_*K_W=K_X$ and $q_*K_W=K_{X'}$. 
Write $K_W+B_W=p^*(K_X+B)$ and $K_W+B'_W=q^*(K_{X'}+B')$ and let $B_Z$ and $B'_{Z'}$ be the discriminant of $(X/Z,B)$ and $(X'/Z',B')$ respectively. By Lemma \ref{l_star}, it follows that $B_Z=\Sigma_Z$ is reduced. In particular, $(Z,B_Z)$ is log canonical. 
Let $a\colon Y\to Z$ and $a'\colon Y\to Z'$ be projective birational morphisms from a normal variety $Y$ which resolve the indeterminacy locus of $\alpha\colon Z'\dashrightarrow Z$. As above, we may assume that 
$a_*K_Y=K_Z$ and $a'_*K_Y=K_{Z'}$. 
After possibly replacing $W$ by a higher model, we may assume that the induced map $g\colon W\to Y$ is a morphism. 

Let
\[
A\coloneqq p^*M_X-q^*M_{X'}.
\]
We need to show that $A\ge 0$. By assumption, $A$ is vertical with respect to  $g\colon W\to Y$. 
We write
\[
a^*(K_Z+B_Z)=K_{Y}+B_Y\qquad\text{and}\qquad 
a'^*(K_{Z'}+B'_{Z'}) = K_Y+B'_Y.
\]
Note that, since $(X'/Z',B')$ is BP stable over $Z'$, it follows that $B'_Y$ is the discriminant  of  $(W/Y,B'_W)$. 
Moreover, since $(Z,B_Z)$ is log canonical, so is $(Y,B_Y)$. Note though that $(Y, B_Y)$ is not necessarily the discriminant of $(W/Y, B_W)$.

We have 
\[
\begin{aligned}
A&=(K_W+B_W-g^*(K_Y+B_Y)) - (K_W+B'_W-g^*(K_Y+B'_Y)) \\
&= B_W-g^*B_Y-B'_{W}+g^*B'_Y.
\end{aligned}
\]

Let $E$ be a prime  divisor on $X$ which is vertical with respect to $f$ and let $E'$ be its strict transform on $W$. 
We claim that $m_{E'}A\ge 0$. 
Note that, by the negativity lemma and since $M_{X'}$ is nef, the claim immediately implies the Proposition. 

Let $P\coloneqq f(E)$. We distinguish two cases. If $P$ is contained in $\Sigma_Z$ then, since $(X/Z,B)$ satisfies Property $(*)$, it follows that  $m_EB=1$  and, therefore,
\[
m_{E'}B_W=1.
\]
Since $(X'/Z',B')$ is BP stable over $Z'$, it follows that $(W/Y,B'_W)$ is BP stable over $Y$ and, 
Theorem \ref{t_logstable} implies that it is log stable. 
Thus, since $(Y,B_Y)$ is log canonical and $B'_Y$ is the discriminant  of $(W/Y,B'_W)$, it follows that $(W,B'_W+g^*(B_Y-B'_Y))$ is log canonical and, in particular,
\[
m_{E'}(B'_W+g^*(B_Y-B'_Y))\le 1.
\]
Thus, $m_{E'}A\ge 0$, as claimed. 

Assume now that $P$ is not contained in $\Sigma_Z$.  Lemma \ref{l_star} implies that $f$ is equidimensional outside $\Sigma_Z$ and, therefore, it follows that $P$ is a divisor in $Z$. Let $\Sigma=\Sigma_Z+P$. Then, since $(X/Z,B)$ satisfies Property $(*)$, we have that
$m_{E}(B+f^*(\Sigma-\Sigma_Z))=1$.
Thus, if $P'$ is the strict transform of $P$ in $Y$, we have that
\[m_{E'}(B_W+g^*P')=1.
\]
Since $m_{P'} B'_Y= 1-\gamma_{P'}$ where 
\[
\gamma_{P'}=\sup\{t\in \mathbb R\mid (W,B_W+tg^*P') \text{ is log canonical over the generic point of }P'\},
\]
and since $m_{P'}B_Y=0$, it follows that
\[
\begin{aligned}
m_{E'}A&=m_{E'}(B_W+g^*P'-B'_W-g^*(P'-B'_Y))\\
&= 1-m_{E'}(B'_W+\gamma_{P'}g^*P')\ge 0.
\end{aligned}
\]
Thus, our claim follows again. 
\end{proof}

\begin{remark}\label{r_maximal}
Note that, using the same notation as Proposition \ref{p_maximal}, the same proof as above shows that the same result holds if if we replace (1) by
\begin{enumerate}
\item [(1)']  $(X/Z,B)$ satisfies Property $(*)$ and $f$ is equidimensional.
\end{enumerate}
\end{remark}

\section{Cone theorem for algebraically integrable foliations}

The goal here is to explain a proof of the cone theorem for algebraically integrable foliations. This is the main ingredient in the proof of Theorem \ref{thm_main}. 

Some of these ideas were first considered in \cite{KMM92c} and \cite{Miyaoka93}
in the case of a smooth morphism using the notion of a relative deformation.  
By adopting the foliated viewpoint 
we are able to work on singular varieties as well as working with rational dominant maps.

\subsection{Definitions}
A {\bf foliation of rank $r$} on a normal variety $X$ is a coherent subsheaf $\cal F \subset T_X$ of rank $r$ such that
\begin{enumerate}
\item $T_{\cal F}$ is saturated, i.e. $T_X/T_{\cal F}$ is torsion free; and

\item $T_{\cal F}$ is closed under Lie bracket.
\end{enumerate}
The {\bf co-rank} of $\cal F$  is its co-rank as a subsheaf of $T_X$.  We define the canonical divisor $K_{\cal F}$ to be a divisor on $X$ so that $\cal O_X(K_{\cal F}) \cong (\det T_{\cal F})^*$
and we define the conormal divisor $K_{[X/\cal F]}$ to be a divisor on $X$ so that
$\cal O_X(K_{[X/\cal F]}) \cong (\det (T_X/T_{\cal F}))^*$.
For any positive integer $d$, we denote $\Omega^{[d]}_X\coloneqq (\wedge^d\Omega^1_X)^{**}$.
Then, the foliation $\cal F$ induces a map 
\[
\phi\colon (\Omega^{[r]}_X \otimes \cal O_X(-K_{\cal F}))^{**} \rightarrow \cal O_X.\]
The {\bf singular locus} of $\cal F$, denoted by $\Sing \cal F$, is the cosupport of the image of $\phi$.

\medskip

Let $\sigma\colon Y\dashrightarrow X$ be a dominant map between normal varieties and let $\cal F$ be a foliation of rank $r$ on $X$. We denote by $\sigma^{-1}\cal F$ the {\bf induced foliation} on $Y$ (e.g. see \cite[Section 3.2]{Druel21}).  
If $T_{\cal F}=0$, i.e., $\cal F$ is the foliation by points on $X$, 
then we refer to $\sigma^{-1}\cal F$ as the  {\bf foliation induced by $\sigma$}. 
In this case, the foliation $\sigma^{-1}\cal F$ is called {\bf algebraically integrable}.

Let $f\colon X\to Z$ be a morphism between normal varieties and let $\cal F$ be the induced foliation on $X$. 
If $f$ is equidimensional and $Z$ is smooth, then we define the  {\bf ramification divisor} $R(f)$ of $f$  as 
\[
R(f)=\sum_D (\phi^*D-\phi^{-1}(D))
\]
where the sum runs through all the prime divisors of $Z$. In this case, we have
\[K_{\cal F} \sim K_{X/Z}-R(f)\] 
(e.g. see  \cite[Notation 2.7 and \S 2.9]{Druel15b}).

Let $X$ be a normal variety and let $\cal F$ be a rank $r$ foliation on $X$. 
Let $S\subset X$ be a subvariety. Then  $S$ is said to be  {\bf $\cal F$-invariant}, or {\bf invariant by }$\cal F$, if for any open subset $U\subset X$ and any section $\partial \in H^0(U,\cal F)$, we have that 
\[ \partial (\mathcal I_{S\cap U})\subset \mathcal I_{S\cap U},
\]
where $\mathcal I_{S\cap U}$ denotes the ideal sheaf of $S\cap U$. 
If  $D\subset X$ is a prime divisor then we define $\epsilon(D) = 1$ if $D$ is not $\cal F$-invariant and $\epsilon(D) = 0$ if it is $\cal F$-invariant. Note that if $\cal F$ is the foliation induced by a dominant map $\sigma\colon Y\dashrightarrow Z$, then a divisor $D$ is $\cal F$-invariant if and only if it is vertical with respect to $\sigma$.

\medskip

\subsection{Log canonical foliated pairs}\label{s_lcf}
Let $X$ be a normal variety. 
A {\bf foliated pair} $(\cal F, \Delta)$ on $X$ consists of  a foliation $\cal F$ on $X$ and a $\mathbb R$-divisor 
 divisor $\Delta$
such that $K_{\cal F}+\Delta$ is  $\mathbb R$-Cartier.

Let $X$ be a smooth variety, let $\cal F$ be an algebraically integrable foliation on $X$  and let $\Sigma_X$ be a reduced divisor.
We say that the foliated pair $(\cal F, \Sigma_X)$ is {\bf foliated log smooth} if
\begin{enumerate}
\item $(X, \Sigma_X)$ is a toroidal pair; and

\item there exists a log smooth toroidal pair $(Z, \Sigma_Z)$  and a toroidal contraction $f\colon X\to Z$ which
induces $\cal F$.
\end{enumerate}

Let  $(X,B)$ be a log pair,  $f\colon X\dashrightarrow Y$  a dominant map between normal varieties and let $\cal F$ be the  foliation induced by $f$. Then, by Theorem \ref{t_AK},
there exists a birational morphism  $m\colon X' \rightarrow X$
so that $(m^{-1}\cal F, \Supp m_*^{-1}B+\Exc m)$ is foliated log smooth.
We call such a modification a {\bf foliated log resolution} provided $\Exc m$ is of pure codimension one.

Given a birational morphism $\pi\colon  \widetilde{X} \rightarrow X$ 
and a foliated pair $(\cal F, \Delta)$ on $X$,
let $\widetilde{\cal F}$ be the pulled back foliation on $\tilde{X}$
and $\pi_*^{-1}\Delta$ be the strict transform of $\Delta$ in $\widetilde X$. 
We may write
\[
K_{\widetilde{\cal F}}+\pi_*^{-1}\Delta=
\pi^*(K_{\cal F}+\Delta)+ \sum a(E, \cal F, \Delta)E.\]
where $\pi_*K_{\widetilde{\cal F}}=K_{\cal F}$,  the sum runs over all the prime $\pi$-exceptional divisors on $\widetilde X$ and  the rational number $a(E,\cal F,\Delta)$ is called the {\bf discrepancy} of $(\cal F,\Delta)$ with respect to $E$. 
We say that  the foliated pair $(\cal F, \Delta)$ is {\bf log canonical}
 if 
$a(E, \cal F, \Delta) \geq -\epsilon(E)$, 
for any birational morphism  $\pi\colon \widetilde X\to X$ and for any prime $\pi$-exceptional divisor $E$ on  $\widetilde X$.
We say that $(\cal F, \Delta)$ is {\bf F-dlt} if there exists a 
foliated log resolution which only extracts divisors $E$ 
of discrepancy $a(E,\cal F,\Delta)>-\epsilon(E)$.

Let $W\subset X$ be  a subvariety. We say that $W$ is a {\bf log canonical centre} of $(\cal F,\Delta)$ if $(\cal F,\Delta)$ is log canonical at the generic point of $W$ and there exists a birational morphism $\pi\colon Y\to X$ and a prime divisor $E$ on $Y$ of discrepancy $a(E,\cal F,\Delta)=-\epsilon(E)$ and whose centre in $X$ is $W$. The divisor $E$ is called a {\bf log canonical place} of $(\cal F,\Delta)$. 
We say that $W$ is a {\bf non log canonical centre} of $(\cal F,\Delta)$ if there exists a birational morphism $\pi\colon \tilde X\to X$ and a prime divisor $E$ on $\tilde X$ of discrepancy $a(E,\cal F,\Delta)<-\epsilon(E)$ and whose centre in $X$ is $W$. 
We denote by $\nlc(\cal F,\Delta)$  the set of all the non log canonical centres of $(\cal F,\Delta)$.

\begin{lemma}
\label{lem_toroidal_lc}
Let $(X, \Sigma_X)$ and $(Z, \Sigma_Z)$ be toroidal pairs, let $f\colon X \rightarrow Z$ be a toroidal contraction 
and let $\cal F$ be the induced foliation.
Let $\Delta$ be the horizontal part of $\Sigma_X$.

Then $(\cal F, \Delta)$ is log canonical.
\end{lemma}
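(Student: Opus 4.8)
The plan is to reduce to the toric situation and prove a relative canonical bundle formula. Since log canonicity of a foliated pair is an étale-local (in fact formal-local) condition on $X$, I would first pass to the toric local models and assume that $f\colon X=X_\sigma\to Z=X_\tau$ is a toric morphism, induced by a lattice homomorphism $\bar g\colon N\to N'$ which is surjective up to torsion (since $f$ is a contraction). Then $\Sigma_X=\partial X_\sigma$, $\Sigma_Z=\partial X_\tau$, and $\Delta=\sum D_\rho$, the sum running over the rays $\rho$ of $\sigma$ with $u_\rho\in\ker\bar g$ — equivalently, over the boundary divisors of $X_\sigma$ that dominate $Z$. The heart of the matter is the identity
\[
K_{\cal F}+\Delta=0
\]
for a suitable choice of $K_{\cal F}$; granting this, log canonicity follows by a short discrepancy computation.

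To prove the identity I would use the relative canonical bundle formula. Assume first, in addition, that $f$ is equidimensional with $Z$ log smooth. Then $K_{\cal F}\sim K_{X/Z}-R(f)$, and the two structural facts about toroidal morphisms I would invoke are that $R(f)=f^*\Sigma_Z-f^{-1}(\Sigma_Z)$ (over a prime divisor of $Z$ not contained in $\Sigma_Z$ the fibres of $f$ stay reduced) and that the vertical part of $\Sigma_X$ is exactly $f^{-1}(\Sigma_Z)$ (a boundary divisor of $X$ is $f$-vertical if and only if it maps into $\Sigma_Z$). Combining these with $\Sigma_X=\Delta+f^{-1}(\Sigma_Z)$ yields
\[
K_{\cal F}+\Delta=(K_X+\Sigma_X)-f^*(K_Z+\Sigma_Z),
\]
and since the logarithmic canonical sheaves $\omega_X(\log\Sigma_X)$ and $\omega_Z(\log\Sigma_Z)$ of toroidal pairs are locally trivial, for the natural choices $K_X=-\Sigma_X$, $K_Z=-\Sigma_Z$ the right-hand side vanishes; hence $K_{\cal F}=-\Delta$. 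The general toroidal case (no equidimensionality or smoothness of $Z$) is then obtained by pushing this identity forward along a toroidalization, which is exactly what the next step does.

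Now let $E$ be a prime divisor over $X$; I may assume $E$ is exceptional over $X$, as otherwise its coefficient in $\Delta$ is at most $\epsilon(E)$. Applying Theorem \ref{t_AK} to a birational model of $X$ on which $E$ is a divisor, together with the morphism to $Z$, I get a commutative square with $\pi\colon\widetilde X\to X$ and $\mu\colon\widetilde Z\to Z$ projective birational, $\widetilde f\colon\widetilde X\to\widetilde Z$ an equidimensional toroidal contraction, $\widetilde Z$ log smooth, $\Sigma_{\widetilde X}=\pi_*^{-1}\Sigma_X+\Exc\pi$, and $E\subset\Sigma_{\widetilde X}$; moreover $\widetilde{\cal F}=\pi^{-1}{\cal F}$ is the foliation induced by $\widetilde f$, and its horizontal boundary is $\widetilde\Delta\coloneqq\Sigma_{\widetilde X}^{\mathrm{hor}}=\pi_*^{-1}\Delta+\sum_{E'\in\Exc\pi,\ \epsilon(E')=1}E'$. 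By the previous step $K_{\widetilde{\cal F}}=-\widetilde\Delta$, and setting $K_{\cal F}\coloneqq\pi_*K_{\widetilde{\cal F}}=-\Delta$ we have $K_{\cal F}+\Delta=0$, whence
\[
\textstyle\sum_{E'\in\Exc\pi}a(E',{\cal F},\Delta)\,E'=K_{\widetilde{\cal F}}+\pi_*^{-1}\Delta-\pi^*(K_{\cal F}+\Delta)=-\widetilde\Delta+\pi_*^{-1}\Delta=-\sum_{E'\in\Exc\pi,\ \epsilon(E')=1}E'.
\]
Thus $a(E',{\cal F},\Delta)=-\epsilon(E')$ for every exceptional $E'$; in particular $a(E,{\cal F},\Delta)=-\epsilon(E)\ge-\epsilon(E)$, so $({\cal F},\Delta)$ is log canonical.

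The main obstacle is the relative canonical bundle formula of the second paragraph, i.e. the identification $K_{\cal F}=-\Delta$: it rests on the standard but slightly delicate structure theory of equidimensional toroidal morphisms (the exact shape of the ramification divisor and of the vertical boundary), together with the triviality of the logarithmic canonical sheaf of a toric variety and its compatibility with toric modifications. A secondary point requiring care is arranging, via Theorem \ref{t_AK}, that a prescribed divisor $E$ appears on the boundary of a toroidal model lying over $Z$, with $\Sigma_{\widetilde X}$ containing no components beyond $\pi_*^{-1}\Sigma_X$ and the $\pi$-exceptional divisors.
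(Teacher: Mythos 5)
Your proof is correct but takes a genuinely different route from the paper.  You establish the identity $K_{\cal F}+\Delta\sim 0$ (for natural local choices) via the relative canonical bundle formula $K_{\cal F}\sim K_{X/Z}-R(f)$ together with the structural facts that $R(f)=f^*\Sigma_Z-f^{-1}(\Sigma_Z)$ and that the vertical part of $\Sigma_X$ is $f^{-1}(\Sigma_Z)$; you then compute discrepancies explicitly on an equidimensional toroidal model produced by Theorem~\ref{t_AK}.  The paper argues more locally and without invoking weak semistable reduction: it passes to the formal--local toric model and works directly with the defining logarithmic $q$-form $\omega$ and its polar divisor $D$, with $K_{\cal F}=K_X+D$.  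The key observation there is that the polar locus of $b^*\omega$ for a birational modification $b$ grows exactly by $\sum(1-\epsilon(E_i))E_i$, which turns the foliated discrepancy formula for $(\cal F,\Delta)$ into the classical one for the toric log pair $(X,D+\Delta)$; log canonicity of the latter is immediate since $D+\Delta\le\Sigma_X$ is a reduced torus-invariant divisor.  Notice that the paper only needs $D$ and $\Delta$ to have disjoint supports (which follows from $D$ being $\cal F$-invariant while $\Delta$ is not), whereas your route asks for the sharper fact $D=f^{-1}(\Sigma_Z)$, i.e.\ that the polar divisor is exactly the vertical boundary.  Your argument also relies on the toroidal structure $\Sigma_{\widetilde X}$ in Theorem~\ref{t_AK} being \emph{exactly} $\pi_*^{-1}\Sigma_X+\Exc\pi$; the theorem as stated only guarantees containment, so one should argue briefly that the extra horizontal components, if any, are exceptional (otherwise $\pi_*\widetilde\Delta\ne\Delta$ and the discrepancy bookkeeping slips).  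Both refinements hold but deserve explicit justification.  The upside of your approach is that it isolates the clean identity $K_{\cal F}+\Delta\sim 0$ and ties it to the canonical bundle formula for toroidal fibrations, which is conceptually close to what the paper later packages as Proposition~\ref{prop_*comparison}; the paper's proof of the present Lemma is shorter and avoids any global toroidalization.
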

\begin{proof}
The statement is local about a point $x \in X$. Thus, we may freely assume that $(X, \Sigma_X)$ is a toric pair, $x$ is the unique closed orbit 
and $\cal F$ is a torus invariant foliation. 

If $\cal F$ is of co-rank $q$ then  $\cal F$
can be defined by a logarithmic torus invariant $q$-form $\omega = \sum_{|I| = q} \alpha_I \frac{dx_I}{x_I}$, for some $\alpha_I \in \mathbb C$.
Let $D$ denote the polar locus of $\omega$, and observe that $\cal O_X(K_{[X/\cal F]} )= \cal O_X(-D)$. Note that $D$ is
$\cal F$-invariant and torus invariant.  In particular, since no component of $\Delta$ is invariant,
it follows that $D$ and $\Delta$ have no components in common.

Let $b\colon X' \rightarrow X$ be a birational morphism, set $\cal F' = b^{-1}\cal F$ and let $\omega' = b^*\omega$.  
Note that 
$\omega' = b^*\omega$ is a logarithmic $q$-form with polar locus equal to
$b_*^{-1}D+\sum \delta_i E_i$ where $E_i$ are the $b$-exceptional divisors
and $\delta_i \in \{0, 1\}$.  In fact, observe that $\delta_i = 1-\epsilon(E_i)$.
So we have $K_{[X'/\cal F']} = b_*^{-1}K_{[X/\cal F]}+\sum (\epsilon(E_i)-1)E_i$.

Thus, if we write \[K_{X'}+b_*^{-1}(D+\Delta) = b^*(K_X+D+\Delta)+F\] and \[K_{\cal F'}+b_*^{-1}\Delta = b^*(K_{\cal F}+\Delta)+G\]
then we see that $G = F+\sum (1-\epsilon(E_i)) E_i$.
Our claim then follows by observing that $(X, D+\Delta)$ is log canonical.
\end{proof}

\subsection{Adjunction results}
\label{s_adjunction}

\begin{proposition}
\label{prop_inv_adj}
Let $X$ be a normal $\mathbb Q$-factorial variety and let $\cal F$ be a foliation induced
by a  contraction $f\colon X \rightarrow Z$.  Let $T$ be a prime divisor on $X$
with normalisation $\nu\colon S \rightarrow T$.
Let $(\cal F,\Delta)$ be a foliated  pair on $X$ where $\Delta \geq  0$ is such that  $m_T\Delta = \epsilon(T)$.

Then 

\begin{enumerate}

\item there exists a foliated  pair $(\cal F_S,\Delta_S)$ on $S$ 
such that $\Delta_S\ge 0$ and 
\[
\nu^*(K_{\cal F}+\Delta) = K_{\cal F_S}+\Delta_S; 
\]

\item if $(\cal F,\Delta)$ is log canonical, then $(\cal F_S,\Delta_S)$ is also log canonical; and 


\item if $\epsilon(T) = 1$ and $(\cal F, \Delta)$ is F-dlt then $(\cal F_S, \Delta_S)$ is F-dlt.
\end{enumerate}
\end{proposition}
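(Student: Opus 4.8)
The plan is to construct $\cal F_S$ uniformly and then reduce the numerical identity to a toroidal computation. Let $W=\overline{f(T)}$ and let $S\to W'\to W^\nu$ be the Stein factorisation of the induced map $S\to W^\nu$; define $\cal F_S$ to be the algebraically integrable foliation induced by the contraction $g\colon S\to W'$. When $\epsilon(T)=1$ the divisor $T$ is horizontal, so $W=Z$; when $\epsilon(T)=0$ it is $\cal F$-invariant and vertical. Write $\Delta=\epsilon(T)\,T+\Delta'$ with $\Delta'\ge 0$ having no component supported on $T$. Since $\nu^*(K_{\cal F}+\Delta)$ is $\mathbb R$-Cartier, the asserted identity may be verified after pulling back along a birational morphism; passing to a foliated log resolution $\pi\colon Y\to X$ of $(\cal F,\Delta)$ on which, in addition, the strict transform $T_Y$ of $T$ is smooth and the restricted contraction $T_Y\to W'_Y$ is again toroidal and equidimensional, one computes directly on $Y$ exactly as in the proof of Lemma \ref{lem_toroidal_lc}: using torus invariant logarithmic forms together with $K_{\cal F}\sim K_{X/Z}-R(f)$, ordinary adjunction, and the behaviour of the ramification divisors under restriction to $T_Y$ and under the finite maps $W'\to W^\nu\hookrightarrow Z$. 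This yields $\nu^*(K_{\cal F}+\Delta)=K_{\cal F_S}+\Delta_S$ with $\Delta_S=\nu_*^{-1}\Delta'+\Diff$, where the different $\Diff\ge 0$ because on the toroidal model it is a sum of (torus invariant, hence log canonical) boundary contributions. Descending to $S$ by push-forward, and using that the restricted-foliation construction is compatible with the birational maps involved, gives (1).

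\textbf{Proof of (2).} Assume $(\cal F,\Delta)$ is log canonical and suppose, for contradiction, that there is a divisor $E_S$ over $S$ with $a(E_S,\cal F_S,\Delta_S)<-\epsilon(E_S)$. Choose a foliated log resolution $\pi\colon Y\to X$ of $(\cal F,\Delta)$ which additionally dominates a model realising $E_S$ and on which the strict transform $T_Y$ of $T$ is smooth, and write $K_{\cal F_Y}+\Delta_Y=\pi^*(K_{\cal F}+\Delta)+\sum a(E,\cal F,\Delta)E$. Applying (1) to $(\cal F_Y,\Delta_Y)$ and $T_Y$ we get $(K_{\cal F_Y}+\Delta_Y)|_{T_Y}=K_{\cal F_{T_Y}}+\Delta_{T_Y}$; by functoriality of the Stein factorisations, $\cal F_{T_Y}$ is the pull-back of $\cal F_S$ and $\Delta_{T_Y}$ is the divisor one obtains by restricting the crepant pull-back of $\Delta_S$. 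Hence $a(E_S,\cal F_S,\Delta_S)$ is realised as the coefficient of the relevant component of $\Delta_{T_Y}$, and the effectivity of the different from (1) together with $a(E,\cal F,\Delta)\ge-\epsilon(E)$ for all $E$ forces $a(E_S,\cal F_S,\Delta_S)\ge-\epsilon(E_S)$, a contradiction. Thus $(\cal F_S,\Delta_S)$ is log canonical. The crucial sub-step is that adjunction is compatible with passing to higher birational models, which follows from the compatibility of the Stein factorisations of $S\to W^\nu$ on the various models.

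\textbf{Proof of (3).} Assume $\epsilon(T)=1$ and $(\cal F,\Delta)$ is F-dlt. Take a foliated log resolution $\pi\colon Y\to X$ witnessing F-dltness, so that $\pi$ extracts only divisors $E$ with $a(E,\cal F,\Delta)>-\epsilon(E)$, and arrange that $T_Y$ is smooth. Restricting to $T_Y$, the induced map $T_Y\to S$ is birational; the pair $(\cal F_{T_Y},\Delta_{T_Y})$ is foliated log smooth because $T_Y$ is a stratum of a foliated log smooth pair on $Y$ and, since $\epsilon(T)=1$, the restricted toroidal contraction $T_Y\to W'_Y$ is again toroidal and equidimensional, so $T_Y\to S$ is a foliated log resolution of $(\cal F_S,\Delta_S)$. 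Every divisor it extracts is a component of the intersection of $T_Y$ with a $\pi$-exceptional divisor, and by the discrepancy comparison underlying (1) and (2) such a divisor has discrepancy $>-\epsilon$ over $S$. Hence $(\cal F_S,\Delta_S)$ is F-dlt. (It is precisely the invariant case $\epsilon(T)=0$ for which this restricted toroidal structure can degenerate, which is why the hypothesis $\epsilon(T)=1$ is imposed here.)

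\textbf{Main obstacle.} The heart of the matter is part (1): establishing the adjunction identity, and especially the effectivity $\Delta_S\ge 0$ of the different, uniformly in the invariant case $\epsilon(T)=0$ and the non-invariant case $\epsilon(T)=1$, while correctly setting up $\cal F_S$ when $f|_T$ has disconnected fibres (the Stein factorisation $S\to W'\to W^\nu$). Once (1) is in place, together with the compatibility of adjunction with birational models, parts (2) and (3) are formal bootstrapping of the inversion-of-adjunction type.
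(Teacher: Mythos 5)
Your definition of $\cal F_S$ in the invariant case is wrong, and this is exactly the pitfall the paper flags in the remark immediately following the proposition. You set $W=\overline{f(T)}$ and define $\cal F_S$ as the foliation induced by the Stein factorisation $S\to W'$. But when $\epsilon(T)=0$ the correct restricted foliation is obtained locally, on the open locus where $X$, $T$ and $\cal F$ are all smooth, by noting that the inclusion $T_{\cal F}|_T\hookrightarrow T_X|_T$ factors through $T_T$ (because $T$ is invariant); one then saturates to get $\cal F_S$ on $S$. This is \emph{not} the foliation induced by the map to the image. Take the paper's own counterexample: $X$ the blowup at a point of a smooth threefold admitting a smooth morphism to a surface $Z$, $\cal F$ the pulled-back foliation, $T=E$ the exceptional $\mathbb P^2$. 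Then $f(E)$ is a point, so your Stein factorisation is $E\to\{\mathrm{pt}\}$ and your $\cal F_S$ is the whole of $T_E$ (rank~$2$), whereas the genuine restricted foliation is the rank-one radial foliation on $\mathbb P^2$. The canonical divisors disagree, so the adjunction identity in (1) cannot hold with your $\cal F_S$. This error then propagates through (2), where the ``functoriality of the Stein factorisations'' you invoke does not hold for the genuinely restricted foliation, and makes the discrepancy comparison unfounded.

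Beyond that, even with the correct $\cal F_S$, the hard content of (1)---effectivity $\Delta_S\ge 0$ in the non-invariant case $\epsilon(T)=1$---is not addressed by a toroidal computation alone. The different here is supported on the ramification locus of $S\to Z$, not just on torus-invariant strata, and controlling it is nontrivial: the paper runs an induction on $\dim(X/Z)$, cutting by general ample divisors and appealing to \cite[Proposition 3.6]{Druel21} and \cite[Proposition 2.19]{CS20}. Your claim that the different ``is a sum of torus invariant, hence log canonical, boundary contributions'' skips this entirely. If you want to fix the proof, you must (i) give the correct local-sheaf-theoretic construction of $\cal F_S$ when $\epsilon(T)=0$ (the paper cites \cite[Lemma 3.7]{AD14} for the existence of $\Delta_S\ge 0$ there), and (ii) in the horizontal case, replace the vague toroidal argument with an actual control of the ramification terms, e.g.\ via the paper's hyperplane-section induction.
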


The induced foliation $\cal F_S$ on $S$ is called the {\bf restricted foliation}. 
Observe that if $S$ is $\cal F$-invariant then $\cal F_S$ is not necessarily the foliation induced by 
$S \rightarrow Y$ where $Y$ is the normalisation of  $f(T)$. For example, consider the blow up at a point of a smooth foliation induced by a smooth morphism from  a threefold to a surface and let $S$ be the exceptional divisor.  However, it is still the case that $\cal F_S$ is induced by a rational dominant map.

\begin{proof}
We argue in cases based on $\epsilon(T)$.

\medskip

{\bf Case I: $\epsilon(T) = 0$.}
Let $U\subset X$ be an open subset containing the general point of $T$ and not intersecting
$\Sing \cal F\cup \Sing X\cup \Sing T$. Let $T'=T\cap U$. Then, since $T$ is invariant, we have that, over $U$, the inclusion of sheaves 
$T_{\cal F}|_{T'}\to {T_X}|_{T'}$ factors through $T_{T'}$ and, in particular, it defines a foliation on $T'$ which yields a foliation $\cal F_S$ on $S$. The existence of $\Delta_S$ follows easily from \cite[Lemma 3.7]{AD14}. Thus, (1) follows. 

We now prove (2). 
Assume that $(\cal F,\Delta)$ is log canonical. 
Let $m\colon X' \rightarrow X$ be a log resolution of $(\cal F, \Delta)$ and let $S'$ be the strict transform of $T$. We may write
\[
K_{\cal F'}+\Delta'+F_0+F_1 = m^*(K_{\cal F}+\Delta)
\]
where $\Delta' = m_*^{-1}\Delta$ and $F_0,F_1$ are $\mathbb R$-divisors on $X'$ such that  $\Supp F_0$ is $\cal F'$-invariant and $\Supp F_1$ is not $\cal F'$-invariant. Since $(\cal F,\Delta)$ is log canonical, we have that $F_0\le 0$ and the coefficients of $F_1$ are at most one. 
We define
\[
K_{\cal F'}\vert_{S'} = K_{\cal F_{S'}}+\Theta_{S'}\qquad\text{and}\qquad 
\Delta_{S'} \coloneqq \Theta_{S'}+\Delta'\vert_{S'}.
\]  
We may replace $m$ by a higher model so that  $(\cal F_{S'}, \Delta_{S'})$ is also log smooth. Let $n\colon S'\to S$ be the induced morphism. We have
\[
K_{\cal F_{S'}}+\Theta_{S'}+\Delta'|_{S'} + F_0|_{S'}+F_1|_{S'} = (K_{\cal F'}+\Delta'+F_0+F_1)|_{S'}=n^*(K_{\cal F_{S}}+\Delta_S).
\]
We now make note of some easy consequences of log smoothness of $(\mathcal F', \Delta'+\Exc m)$.  Indeed, each of these claims
is local, and so we may assume in fact that $(\mathcal F', \Delta'+\Exc m)$ is induced by a toric morphism in which case these claims are easy to verify.
First, we note that $\Theta_{S'}$ is a reduced divisor supported
exactly on the codimension one components of $\Sing \cal F'\cap S'$. 
Next, $\Delta'+F_1$ contains no codimension $2$ components of $\Sing \cal F'$ and so
$\Theta_{S'}$ and $(\Delta'+F_1)\vert_{S'}$ have
no components in common.
Finally, no component of
$F_1\vert_{S'}$ is $\cal F_{S'}$-invariant
and no component of $ \Sing \cal F'\cap S'$ is $\cal F_{S'}$ invariant.
From these three claims we see that $(\cal F_{S'}, \Theta_{S'}+\Delta'|_{S'} + F_0|_{S'}+F_1|_{S'})$ is log canonical
and so $(\cal F_{S}, \Delta_S)$ is log canonical from which (2) follows.



\medskip

{\bf Case II: $\epsilon(T) = 1$.}
Note that  in this case  $T$ dominates $Z$, let $g\colon S \to Z$ be the induced morphism and let $\cal F_S$ be the induced foliation.
Let $H$ be a general ample divisor on $X$, let $h\colon H \rightarrow Z$ be the induced morphism and let $\cal F_H$
be the induced foliation.

We now show that $\Delta_S \geq 0$ by induction on $\dim (X/Z)$.
If $\dim (X/Z) = 1$ then
 we may apply \cite[Proposition 2.19]{CS20} to see that $\nu^*(K_{\cal F}+T) = D \geq 0$
(note that the threefold hypothesis there is not necessary) from which we may conclude.
Observe that if $H$ is sufficiently general and if we write $(K_{\cal F}+H))\vert_H = D_H \geq 0$
then $D_H$ is supported on the locus where $H$ is tangent to $\cal F$.  In particular, if $H$ is sufficiently general
and $E \subset H$ is a divisor then $m_ED_H = 0$ if $h_*E = 0$ and $m_ED_H = r_E-1$ if $h_*E \neq 0$
where $r_E$ is the ramification index of $h$ along $E$.

Suppose now that $\dim (X/Z) >1$.
By \cite[Proposition 3.6]{Druel21} we see that $(K_{\cal F}+H)\vert_H = K_{\cal F_H}$.
Let $S_H \rightarrow T\cap H$ be the normalisation morphism.  
Write $(K_{\cal F}+\Delta+H)\vert_H = K_{\cal F_H}+\Delta\vert_H$ and observe that
$\epsilon(T\cap H) = 1$.
By induction if we write $(K_{\cal F_H}+\Delta\vert_H)\vert_{S_H} = K_{\cal F_{S_H}}+\Delta_{S_H}$
then $\Delta_{S_H} \geq 0$.  On the other hand, we have $(K_{\cal F}+\Delta+H)\vert_S = K_{\cal F_S}+\Delta_S+H_S$
where $H_S\cong S_H$.  If $\dim (S/Z) >1$ then again by \cite[Proposition 3.6]{Druel21} we have $\Delta_{S_H} = \Delta_{S}\vert_{H_S}$
and so $\Delta_S \geq 0$.  If $\dim (S/Z) = 1$ then we have 
$(K_{\cal F_S}+\Delta_S+H_S)\vert_{H_S} = K_{\cal F_{S_H}}+\Delta_S\vert_{H_S}+D$
where $D \geq 0$ is supported on the ramification divisor of $H_S \rightarrow Z$.  In this case 
$0 \leq \Delta_{S_H} = \Delta_S\vert_{H_S}+D$.
By choosing $H$ to be sufficiently general we may arrange it so that $D$ and $\Delta_S\vert_{H_S}$ have no components in common
from which we may conclude that $\Delta_S \geq 0$.

(2) and (3) follow by similar arguments as in the previous case.
\end{proof}

\subsection{Tangent subvarieties}

Let $X$ be a normal variety and let $\cal F$ be a foliation induced by a rational map $X \dashrightarrow Z$.
We say a subvariety $V \subset X$ is {\bf tangent} to $\cal F$ if there exist
\begin{enumerate}
\item a birational modification $\mu\colon X' \rightarrow X$ and an equidimensional contraction
$X' \rightarrow Z'$ which induces $\mu^{-1}\cal F$; and

\item a subvariety $V' \subset X'$ contained in a fibre of $X' \rightarrow Z'$ such that $V = \mu(V')$.
\end{enumerate}

We remark that when $X$ and $\cal F$ are smooth this agrees with the usual definition of tangency.
We collect some basic observations about tangent subvarieties below.

\begin{lemma}
\label{lem_tangency}
Let $X$ be a normal variety and let $\cal F$ be a foliation induced by a rational map $X \dashrightarrow Z$.
\begin{enumerate}
\item Let $\pi\colon Y \rightarrow X$ be a birational morphism and let $W \subset Y$ be a subvariety
tangent to $\pi^{-1}\cal F$.  Then $\pi(W)$ is tangent to $\cal F$.

\item Let $\pi\colon Y \rightarrow X$ be a birational morphism and let $V \subset X$ be tangent to $\cal F$.
Then there exists a subvariety $W \subset Y$ tangent to $\pi^{-1}\cal F$ and such that $\pi(W) = V$.

\item Suppose that $\cal F$ is induced by a morphism $f\colon X \rightarrow Z$ and suppose that $V$ is tangent to $\cal F$.
Then $V$ is contained in a fibre of $X \rightarrow Z$.

\item Suppose that $\cal F$ is induced by a morphism $f\colon X \rightarrow Z$ and let $\nu\colon S \rightarrow T \subset X$
be the normalisation of a divisor with restricted foliation $\cal F_S$. If $V\subset S$ is tangent to $\cal F_S$ then $\nu(V)$
is tangent to $\cal F$.
\end{enumerate}

\end{lemma}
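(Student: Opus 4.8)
The plan is to prove the four statements essentially in order, each by unwinding the definition of tangency and using that the resolutions involved can be dominated by a common one. Throughout I would repeatedly invoke the basic fact that given a foliation induced by a rational map $X\dashrightarrow Z$, any two equidimensional contractions inducing (a birational transform of) $\cal F$ are dominated by a third: one resolves the indeterminacy of the birational maps on the $X$-side and on the $Z$-side simultaneously, using Theorem \ref{t_AK} to arrange equidimensionality, and then one has a commutative diagram relating the two. This is the single recurring device.

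\textbf{(1)} Suppose $W\subset Y$ is tangent to $\pi^{-1}\cal F$, witnessed by $\mu'\colon Y'\to Y$ and an equidimensional contraction $g'\colon Y'\to Z'$ inducing $(\mu')^{-1}\pi^{-1}\cal F$, together with $W'\subset Y'$ contained in a fibre of $g'$ with $\mu'(W')=W$. Setting $\mu\coloneqq \pi\circ\mu'\colon Y'\to X$, this is a birational modification of $X$, the contraction $g'$ induces $\mu^{-1}\cal F$ (since $(\mu')^{-1}\pi^{-1}\cal F = \mu^{-1}\cal F$), and $W'$ still lies in a fibre of $g'$; hence $\mu(W') = \pi(\mu'(W')) = \pi(W)$ is tangent to $\cal F$, using the same witness data. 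So (1) is immediate from the definition.

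\textbf{(2)} Suppose $V\subset X$ is tangent to $\cal F$, witnessed by $\mu\colon X'\to X$, $g\colon X'\to Z'$ equidimensional inducing $\mu^{-1}\cal F$, and $V'\subset X'$ in a fibre of $g$ with $\mu(V')=V$. I would pick a normal variety $X''$ with proper birational morphisms $p\colon X''\to X'$ and $q\colon X''\to Y$ resolving the indeterminacy of the birational map $X'\dashrightarrow Y$; applying Theorem \ref{t_AK} to $g\circ p\colon X''\to Z'$ I may further assume (after replacing $X''$ by a higher model and $Z'$ by a birational model $Z''$) that the induced contraction $g''\colon X''\to Z''$ is equidimensional and induces $p^{-1}\mu^{-1}\cal F = q^{-1}\pi^{-1}\cal F$. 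Let $V''$ be an irreducible component of $p^{-1}(V')$ dominating $V'$; since $g$ is equidimensional and $g''$ factors birationally through $g\circ p$, the fibre dimension is controlled and $V''$ is contained in a fibre of $g''$ (here one may need to pass to a component of the preimage that still sits inside a fibre — this is where a small argument is needed). Then $W\coloneqq q(V'')\subset Y$ is tangent to $\pi^{-1}\cal F$ via the witness $(q\colon X''\to Y,\ g''\colon X''\to Z'',\ V'')$, and $\pi(W) = \pi(q(V'')) = \mu(p(V'')) = \mu(V') = V$, as required. This is the step I expect to be the main obstacle: matching up the two towers of modifications and checking that a suitable component of the preimage of $V'$ genuinely lands in a fibre of the new equidimensional model, rather than spreading out — this requires care with the equidimensionality hypothesis and possibly Lemma \ref{l_fibre}-type dimension bookkeeping.

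\textbf{(3)} If $\cal F$ is induced by an actual morphism $f\colon X\to Z$ and $V$ is tangent to $\cal F$ via $\mu\colon X'\to X$, $g\colon X'\to Z'$, $V'\subset X'$ in a fibre of $g$: after resolving the indeterminacy of $Z'\dashrightarrow Z$ we get a commutative square so that $f\circ\mu$ factors (birationally) through $g$; hence $V' $ is contracted by $g$, so $\mu(V')=V$ is contracted by $f$, i.e. contained in a fibre of $f$. (More precisely: $f(\mu(V'))$ is a single point because $g(V')$ is a single point and the morphism $Z'\to Z$, or rather its resolution, is a morphism on the relevant locus.) \textbf{(4)} Let $\nu\colon S\to T\subset X$ be the normalisation of a divisor with restricted foliation $\cal F_S$, and let $V\subset S$ be tangent to $\cal F_S$. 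By the discussion after Proposition \ref{prop_inv_adj}, $\cal F_S$ is induced by a rational dominant map $S\dashrightarrow Y$ (with $Y$ related to $f(T)$ or to an exceptional locus), so ``tangent to $\cal F_S$'' makes sense. I would take a modification $\mu\colon X'\to X$ with equidimensional $g\colon X'\to Z'$ inducing $\mu^{-1}\cal F$ such that the strict transform $T'$ of $T$ is well behaved; by Proposition \ref{prop_inv_adj} the restricted foliation $\cal F_{S'}$ on the normalisation $S'$ of $T'$ is induced by $S'\to g(T')$ (or the appropriate target), and $V$ being tangent to $\cal F_S$ produces, via part (2) applied on $S$, a subvariety $V'\subset S'$ contained in a fibre of $S'\to g(T')$. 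Since fibres of $S'\to g(T')$ are contained in fibres of $g\colon X'\to Z'$ restricted to $T'$, the image of $V'$ in $X'$ lies in a fibre of $g$, hence $\nu(V) = \mu(\text{image of }V')$ is tangent to $\cal F$. The only subtlety, again, is the case where $T$ is $\cal F$-invariant, where $\cal F_S$ is not the foliation induced by $S\to f(T)$; but it is still induced by \emph{some} rational dominant map, and one chooses the equidimensional model of \emph{that} map compatibly with $g$, which is possible after a further common refinement.
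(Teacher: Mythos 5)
Your proposal takes essentially the same route as the paper for all four parts: (1) is unwound directly from the definition, (2) and (3) pass to a common equidimensional model via Theorem \ref{t_AK} and a resolution of indeterminacy, and (4) reduces to (2) and (3) on the strict transform of $T$, using the equidimensionality of $f'$ to make the restricted foliation the one induced by $T'\to f'(T')$. The step in (2) that you flag — finding a component of $\nu^{-1}(V')$ that genuinely sits in a single fibre of the new equidimensional contraction — is indeed asserted by the paper with only an ``observe that'' and no detail, so your caution there is not a deviation but an accurate reading of where the work is hidden.
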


\begin{remark} As above, 
consider the blow up at a point of  a smooth threefold which admits a smooth morphism to a surface, let $\cal F$ be the induced foliation  and let $S$ be the exceptional divisor. 
Then the restricted foliation $\cal F_S$ is the radial foliation on $\mathbb P^2$. A curve in $S$  which is not $\cal F_S$-invariant is also not tangent to $\cal F$, even though it is contained in a fibre of the morphism inducing $\cal F$. Indeed, the induced morphism is not equidimensional. \end{remark}

\begin{proof}
(1) is a straightforward consequence of the definition of tangency.

Now, let $\mu\colon X' \rightarrow X$ be a modification
so that $\mu^{-1}\cal F$ is induced by an equidimensional contraction $f'\colon X' \rightarrow Z'$ and there exists a subvariety $V' \subset X'$
contained in a fibre of $f'$ so that $\mu(V') = V$. 

We now prove (2). Let $\nu\colon X'' \rightarrow X'$ be a birational morphism so that $\nu^{-1}\mu^{-1}\cal F$ is induced by an equidimensional
contraction $f''\colon X'' \rightarrow Z''$ and so that we have a morphism $p\colon X'' \rightarrow Y$.
Observe that we may find a subvariety $V'' \subset X''$ contained in a fibre of $f''$ so that $\nu(V'') = V'$ and so we may take
$W = p(V'')$, as required.

We now prove  (3). 
We may assume that there exists a birational morphism $\sigma\colon Z' \rightarrow Z$ such that $\sigma\circ f'=f\circ \mu$. 
Since $V'$ is contained in a fibre of $f'$ and $\sigma(f'(V')) = f(V)$ it follows immediately that $V$ is contained in a fibre of $f$. 

We finally prove (4). 
Let $\mu\colon X' \rightarrow X$ be a modification
so that $\mu^{-1}\cal F$ is induced by an equidimensional contraction $f'\colon X' \rightarrow Z'$. 
By replacing $X'$ with a higher model, we may freely assume 
that we have a morphism $\tau\colon  T'\coloneqq \mu_*^{-1}T \rightarrow S$. 
By (2) there exists $W \subset T'$ which is tangent to $\tau^{-1}\cal F_{S}$ and such that $\tau(W) = V$.
By (3) we know that $W$ is contained in a fibre of $T' \rightarrow f'(T')$, and so $W$ is contained in a fibre
of $X' \rightarrow Z'$, which implies that $\mu(W) = \nu(V)$ is tangent to $\cal F$.
\end{proof}

\subsection{Property $(*)$ foliations and existence of Property $(*)$ modifications}

\begin{definition}\label{d_fstar}
Let $X$ be a normal variety and let $(\cal F,\Delta)$ be a foliated  pair on $X$. We say that $(\cal F, \Delta)$ {
\bf satisfies Property $(*)$} if there exists a projective contraction $f\colon X \rightarrow Z$ such that $\cal F$ is induced by $f$
and an $\cal F$-invariant divisor $G\ge 0$ on $X$  
such that $(X/Z, \Delta+G)$ satisfies Property $(*)$ (see Definition \ref{d_property*}). The divisor $G$ is said to be {\bf associated to} $(\cal F,\Delta)$. 
\end{definition}

The importance of Property $(*)$ is explained by the following result.

\begin{proposition}
\label{prop_*comparison}
Let $(\cal F, \Delta)$ be a foliated pair on a normal variety $X$ which satisfies Property $(*)$ and 
let $f\colon X \rightarrow Z$ be the induced contraction. Let $G \geq 0$ be the $\cal F$-invariant  divisor associated to $(\cal F,\Delta)$ and let $M_X$ be the moduli part of $(X/Z,\Delta+G)$. 
Suppose moreover that $f\colon X \rightarrow Z$ is equidimensional.

Then 
\begin{enumerate}
\item $K_{\cal F}+\Delta \sim M_X$; and

\item $K_{\cal F}+\Delta \sim_{f, \mathbb R} K_X+\Delta+G$.
\end{enumerate}
\end{proposition}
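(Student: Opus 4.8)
The plan is to compare the foliated canonical divisor $K_{\cal F}$ with the relative canonical divisor $K_{X/Z}$ and then identify the moduli part explicitly using that $(X/Z,\Delta+G)$ satisfies Property $(*)$ together with the equidimensionality of $f$. First I would invoke the formula $K_{\cal F}\sim K_{X/Z}-R(f)$ valid for an equidimensional morphism to a smooth base (recalled in the Definitions subsection, following \cite{Druel15b}); note that here $Z$ is smooth since Property $(*)$ provides a log smooth pair $(Z,\Sigma_Z)$ and, by Lemma \ref{l_star}, $\Sigma_Z=B_Z$ is the discriminant. The next observation is that the ramification divisor $R(f)$ is, by definition, supported on the divisors $D$ of $Z$ with $f^*D$ non-reduced, hence (since $f$ is toroidal in the relevant local models, or directly from Property $(*)$ condition (1)) it is vertical and in fact $f^*\Sigma_Z-f^{-1}(\Sigma_Z)=R(f)$, so that $R(f)=f^*\Sigma_Z-G_0$ where $G_0=f^{-1}(\Sigma_Z)$ is the reduced vertical boundary.

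Then I would assemble the pieces. Since $(X/Z,\Delta+G)$ satisfies Property $(*)$, the vertical part of $\Delta+G$ equals $f^{-1}(\Sigma_Z)$; as $\Delta$ is horizontal (it is the horizontal part of the boundary) and $G$ is $\cal F$-invariant hence vertical, we get $G=f^{-1}(\Sigma_Z)$. By Remark \ref{r_induced}(2), since Property $(*)$ implies BP stability over $Z$ (this uses Lemma \ref{l_star}(2) identifying $B_Z=\Sigma_Z$, and the base-change behaviour), the moduli part is computed naively as
\[
M_X=K_X+\Delta+G-f^*(K_Z+B_Z)=K_X+\Delta+f^{-1}(\Sigma_Z)-f^*K_Z-f^*\Sigma_Z.
\]
On the other hand $K_{\cal F}\sim K_{X/Z}-R(f)=K_X-f^*K_Z-(f^*\Sigma_Z-f^{-1}(\Sigma_Z))$, so $K_{\cal F}+\Delta\sim K_X+\Delta-f^*K_Z-f^*\Sigma_Z+f^{-1}(\Sigma_Z)=M_X$, which is (1).

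For (2), I would simply compute $K_{\cal F}+\Delta-(K_X+\Delta+G)\sim M_X-(K_X+\Delta+G)=-f^*(K_Z+B_Z)$ by Remark \ref{r_induced}(2) again, and $-f^*(K_Z+B_Z)\sim_{f,\mathbb R}0$ since it is an $\mathbb R$-pullback from $Z$; this gives $K_{\cal F}+\Delta\sim_{f,\mathbb R}K_X+\Delta+G$. The main obstacle I anticipate is the careful bookkeeping in the first step: justifying that $R(f)=f^*\Sigma_Z-f^{-1}(\Sigma_Z)$ as divisors (not merely that $R(f)$ is supported inside $\Sigma_Z$), which requires knowing that over divisors of $Z$ \emph{not} contained in $\Sigma_Z$ the morphism $f$ is unramified in codimension one — this follows from Property $(*)$ via Lemma \ref{l_star}(3), since $f$ is equidimensional outside $\Sigma_Z$ and a general fibre over such a divisor meets $X$ transversally — together with tracking that the toroidal/equidimensional hypotheses make $R(f)$ reduced along $f^{-1}(\Sigma_Z)$, so that the coefficients match exactly. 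Everything else is linear algebra on divisor classes, keeping in mind that $K_X$, $K_Z$, $K_{\cal F}$ are only defined up to linear equivalence, which is why the conclusions are stated with $\sim$ and $\sim_{f,\mathbb R}$.
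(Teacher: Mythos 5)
Your proposal takes essentially the same route as the paper: express $K_{\cal F}$ as $K_{X/Z}-R(f)$, identify $B_Z=\Sigma_Z$ via Lemma \ref{l_star}, and match $M_X$ with $K_{\cal F}+\Delta$ by showing the ramification divisor accounts exactly for the discrepancy. The paper does this coefficient-by-coefficient on vertical prime divisors; you package it as the identity $R(f)=f^*\Sigma_Z-f^{-1}(\Sigma_Z)$ and then do algebra — an equivalent reorganization.

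However, there is a genuine gap in your justification of the key fact that $R(f)$ is supported over $\Sigma_Z$ (equivalently, $f^*E$ is reduced for every prime divisor $E\subset Z$ not contained in $\Sigma_Z$). You appeal to Lemma \ref{l_star}(3) and transversality of the general fibre, but Lemma \ref{l_star}(3) only yields equidimensionality of $f$ outside $\Sigma_Z$ — and equidimensionality does not imply that $f$ is unramified in codimension one (a ramified cover of curves is equidimensional and ramified). The correct argument goes through log canonicity: by Property $(*)$ condition (2), for $E$ not in $\Sigma_Z$ and a general closed point $z\in E$, the pair $(X,B+f^*E)$ is log canonical near $f^{-1}(z)$; since $B\geq 0$ and the vertical part of $B$ is $f^{-1}(\Sigma_Z)$ (hence $B$ has zero coefficient along the components of $f^*E$), log canonicity forces the coefficient of $f^*E$ along each component to be at most $1$, so $f^*E$ is reduced. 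This is what the paper's terse ``By Property $(*)$'' is encoding. A minor further point: you route the identity $M_X=K_X+B-f^*(K_Z+B_Z)$ through Remark \ref{r_induced}(2) and a claim that Property $(*)$ implies BP stability, which is not true in general without the extra hypotheses of Theorem \ref{t_*BP}; but since $f$ is equidimensional here this formula is just the definition of the moduli part, so no appeal to BP stability is needed.
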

\begin{proof}
Let $S \coloneqq (f_*R(f))_{red}$ and observe that we have $f^*S = (f^*S)_{red}+R(f)$
and so $K_{\cal F}+f^*S = K_{X/Z}+(f^*S)_{red}$.
Set $B\coloneqq \Delta+G$. By assumption, $(X/Z, B)$ satisfies Property $(*)$. Let $B_Z$ be its discriminant. By Lemma \ref{l_star}, it follows that $B_Z$ is reduced. 

To prove (1) its suffices to show that $D-f^*D_Z = \Delta-R(f)$.
Pick a divisor $E$ on $Z$ and let $E'$ be a component of $f^*E$.
By Property $(*)$, if $E$ is a component of $S$ then $E$ is a component
of $B_Z$ and we have  $m_E B_Z=m_{E'}B =1$ and so $m_{E'}(B-f^*B_Z) = m_{E'}R(f)$.
If $E$ is not a component of $S$ then $m_{E'}(B-f^*B_Z) = 0$.  We may conclude by observing that
$B-\Delta$ is supported on vertical divisors. Thus, (1) follows. 
 
Finally, (2) is a direct consequence of (1).
\end{proof}

\begin{proposition}\label{p_canonical}
Let $(\cal F, \Delta)$ be a foliated pair on a normal variety $X$ which satisfies Property $(*)$ and which is induced by an equidimensional projective contraction 
 $f\colon X \rightarrow Z$. Let $G \geq 0$ be the $\cal F$-invariant  divisor associated to $(\cal F,\Delta)$ and let $B=\Delta+G$.

Then  $(X/Z,B)$ is BP semi-stable over $Z$ (cf. Definition \ref{d_bpstable}) if and only if $(\cal F,\Delta)$ admits log canonical singularities. 
\end{proposition}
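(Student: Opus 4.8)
The plan is to establish a dictionary between the two notions via the moduli-part comparison in Proposition \ref{prop_*comparison} and the log-stable/BP-stable equivalence in Theorem \ref{t_logstable}. First I would set up the situation after a foliated log resolution: by Theorem \ref{t_AK} and Proposition \ref{p_toroidal} (combined with Proposition \ref{prop_*resolution}, and noting that Property $(*)$ and equidimensionality are preserved under the relevant modifications), reduce to comparing discrepancies on a single high model $\pi\colon X'\to X$ dominating everything in sight, with an equidimensional toroidal contraction $f'\colon X'\to Z'$, where $Z'\to Z$ is birational and $(Z',\Sigma_{Z'})$ is log smooth. One should check that a foliated log resolution of $(\cal F,\Delta)$ realizes, fibre-componentwise, exactly the vertical divisors whose coefficients govern BP semi-stability, using Lemma \ref{l_star} (the discriminant of $(X/Z,B)$ equals $\Sigma_Z$, hence is reduced, and $f$ is equidimensional outside $\Sigma_Z$) and Lemma \ref{l_lcc} (components of fibres over strata are log canonical centres).

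Next, the key computation: for a vertical prime divisor $E$ over $X$, relate its foliation discrepancy $a(E,\cal F,\Delta)$ to its role in the discriminant. Using $K_{\cal F}\sim K_{X/Z}-R(f)$ and the identity from Proposition \ref{prop_*comparison}(1), namely $K_{\cal F}+\Delta\sim M_X$ with $M_X=K_X+B-f^*(K_Z+B_Z)$, I would compare $K_{\cal F'}+\pi_*^{-1}\Delta$ with $\pi^*(K_{\cal F}+\Delta)$ and match terms against $K_{Z'}+B'_{Z'}=\alpha^*(K_Z+B_Z)$ (Lemma \ref{lem_base_change}). The upshot should be: the foliation discrepancy $a(E,\cal F,\Delta)\ge -\epsilon(E)$ along an $\cal F'$-invariant (i.e. $f'$-vertical) divisor $E$ over a divisor $P\subset Z'$ translates precisely into the inequality $m_P B'_Z\ge m_P B'_{Z'}$, where $B'_Z$ is the pullback discriminant and $B'_{Z'}$ the genuine discriminant of the induced pair — which is the defining inequality of BP semi-stability. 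Conversely, running this backwards, BP semi-stability bounds exactly the discrepancies along invariant divisors, and along non-invariant (horizontal) divisors $\epsilon(E)=1$ so the log canonicity of $(X,B)=(X,\Delta+G)$ guaranteed by Property $(*)$ (Lemma \ref{l_star}(1)) already forces $a(E,\cal F,\Delta)\ge -1=-\epsilon(E)$. Thus F-log canonicity of $(\cal F,\Delta)$ and BP semi-stability of $(X/Z,B)$ become equivalent divisor-by-divisor conditions.

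For the horizontal divisors one has to be a little careful: discrepancies of $(\cal F,\Delta)$ along non-invariant divisors must still be controlled, and here I would invoke the adjunction results of Proposition \ref{prop_inv_adj} together with the toroidal computation of Lemma \ref{lem_toroidal_lc}, which shows that on the foliated log smooth model the foliation singularities along invariant exceptional divisors behave as in the toric case (this is exactly where the correction term $\sum(1-\epsilon(E_i))E_i$ between the foliated and ordinary discrepancies enters). The equivalence on horizontal places is then automatic from log canonicity of $(X,B)$.

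The main obstacle I anticipate is the bookkeeping of the two discriminants $B'_Z$ (pullback of $B_Z=\Sigma_Z$) versus $B'_{Z'}$ (the actual discriminant after base change / on the higher model), and showing that their difference along a prime divisor $P$ is governed precisely by the foliation discrepancy of a fibre component over $P$ — i.e. that there are no hidden contributions from the ramification divisor $R(f')$ or from $G$. The identity $K_{\cal F}\sim K_{X/Z}-R(f)$ should absorb $R(f')$ and the Property $(*)$ hypothesis pins down $G$ as $f'^{-1}(\Sigma_{Z'})$ with the right coefficients, so this is a matter of careful comparison rather than a conceptual difficulty; nonetheless it is the step where all the definitions must be reconciled simultaneously, and where ``semi-stable'' rather than ``stable'' is forced (one direction of the discriminant inequality can fail to be an equality exactly when the foliation has strictly negative, but still $\ge-\epsilon$, discrepancies).
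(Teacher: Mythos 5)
Your plan is correct and follows essentially the same route as the paper's proof: reduce via Proposition~\ref{prop_*resolution} to an equidimensional Property~$(*)$ model, identify the moduli parts with $K_{\cal F}+\Delta$ and $K_{\cal F'}+\Delta'$ via Proposition~\ref{prop_*comparison}, and observe that nonnegativity of $M_{X'}-\beta^*M_X$ encodes simultaneously the foliation discrepancy condition along $\cal F'$-invariant divisors and the discriminant inequality $B'_{Z'}\le B'_Z$, while log canonicity of $(X,B)$ (Lemma~\ref{l_star}(1)) handles the horizontal discrepancies. Two small slips worth noting: the identity $K_{Z'}+B'_Z=\alpha^*(K_Z+B_Z)$ in the birational setting is simply the defining formula for the auxiliary divisor $B'_Z$ and should not be attributed to Lemma~\ref{lem_base_change} (which concerns finite base change), and you write $B'_{Z'}$ where you mean the pullback $B'_Z$; also, Proposition~\ref{prop_inv_adj} is not needed here.
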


\begin{proof}
We fix a birational morphism  $X'\to X$ (resp. $Z'\to Z$). 
By Proposition \ref{prop_*resolution}, 
after possibly replacing $X'$ (resp. $Z'$) by a higher model, we may assume that 
there exist an equidimensional  
contraction $f'\colon X'\to Z'$ which is birationally equivalent to $f$  
 and a $\mathbb R$-divisor $B'$ on $X'$ such that 
the induced maps $\alpha\colon Z'\to Z$ and $\beta\colon X'\to X$ are birational morphisms,
 $(X'/Z',\overline B)$ satisfies Property $(*)$ and  $(X,B)$ and $(X',\overline B)$ are crepant over the generic point of $Z$. 
In particular, if $\Delta'$ is the horizontal part of $\overline B$ and $G'$ is its vertical part, then $(\cal F',\Delta')$ is a foliated pair satisfying Property $(*)$ and such that $G'$ is its associated divisor.  
Moreover, if we write 
\[
K_{X'}+B'=\beta^*(K_X+B)
\]
then, it follows that $\overline B-B'$ is an effective $\beta$-exceptional $\mathbb R$-divisor which is $\cal F'$-invariant. By Proposition \ref{prop_*comparison}, we may assume that
$K_{\cal F}+\Delta$ (resp.   $K_{\cal F'}+\Delta'$) coincides with  the moduli part $M_X$ (resp.  $M_{X'}$) of $(X/Z,B)$ (resp. $(X'/Z',\overline B)$).
Let $B_Z$ and $\overline B_{Z'}$ be the discriminant of $(X/Z,B)$ and $(X'/Z',\overline B)$ respectively and let $B'_Z$ be defined by 
\[
K_{Z'}+B'_Z=\alpha^*(K_Z+B_Z).
\]
Since  $(X''/Z'',\overline B)$ satisfies Property $(*)$, by Lemma \ref{l_star}, it follows that $\overline B_Z$ coincides with the induced divisor $\Sigma_{Z'}$ 
 on $Z'$. 
 Let $A\coloneqq M_{X'}- \beta^*M_X$. Since 
\[A=\overline B-B' -f'^*(\overline B_{Z'}-B'_Z),\]
it follows that the support of $A$ is $\cal F'$-invariant. We may assume that the image of the support of $A$ in $Z'$ is contained in $\Sigma_{Z''}$. 

\medskip 

We first prove that $A\geq 0$ if and only if $a(E,X,\mathcal F) \ge -\epsilon(E)$ for every $\beta$-exceptional prime divisor $E$.
Indeed, 
let 
\[
A'=K_{\cal F'}+\beta_*^{-1}\Delta+\sum E- \beta^*(K_{\cal F}+\Delta)
\]
where the sum runs over all the $\beta$-exceptional prime divisors which are not $\cal F'$-invariant.
 Then, since $(X,B)$ is log canonical and since $A=K_{\cal F'}+\Delta'-\beta^*(K_{\cal F'}+\Delta)$, it follows that $A'\ge A$. Moreover $A'\ge 0$ if and only if $a(E,X,\mathcal F) \ge -\epsilon(E)$ for every $\beta$-exceptional prime divisor $E$, and, by construction, we have that  $m_EA=m_EA'$ for any $\cal F'$-invariant prime divisor $E$ on $X'$. Since the support of $A$ is $\cal F'$-invariant, our claim follows. 

\medskip

Let $B'_{Z'}$ be the discriminant of $(X'/Z',B')$. We now prove that $A\ge 0$ if and only if $B'_{Z'}\le  B'_{Z}$. Note that the claim immediately implies our result. 
Let $P$ be a component of $\Sigma_{Z'}$. Then, for any prime divisor  $E$ contained in $f^{-1}(P)$, we have that $m_E \overline B=1$. Thus, $m_E(A)\ge 0$ if and only if 
\[
m_E(B'+f'^*(\overline B_{Z'}-B'_Z))\le 1.
\]
It follows that $m_E(A)\ge 0$ for any prime divisor $E$ contained in $f^{-1}(P)$ if and only if $(X',B'+f'^*(\overline B_{Z'}-B'_Z))$ is log canonical over the generic point of $P$. 
Since $m_P(\overline B_{Z'}-B'_Z))=1-m_PB'_Z$, this is in turn equivalent to the inequality 
\[
\gamma_P\ge 1-m_PB'_Z
\]
where 
\[
\gamma_P=\sup\{t\in \mathbb R\mid (X',B'+tf'^*P) \text{ is log canonical over the generic point of }P\}.
\]
Since $m_PB'_{Z'}=1-\gamma_P$, our claim follows. 
\end{proof}

\begin{definition}\label{d_*mod}
Let $X$ be a normal variety and let $(\cal F,\Delta)$ be a foliated pair where $\cal F$ is the  foliation induced by a rational
map $f\colon X \dashrightarrow Y$ and such that none of the components of $\Delta$  is $\cal F$-invariant. 
We define a {\bf Property $(*)$ modification} for $(\cal F,\Delta)$ to be a birational morphism
$\pi\colon X' \rightarrow X$ such that 
if $\cal F'\coloneqq \pi^{-1}\cal F$ and $\Delta'\coloneqq \pi_*^{-1}\Delta+\sum \epsilon(E)E$, 
where the sum is taken over all the $\pi$-exceptional divisors, then  
the following hold
\begin{enumerate}
\item  $X'$ is klt;
\item $\cal F'$ is induced by an equidimensional morphism $X'\to Z'$; 
\item $(\cal F' , \Delta')$ is log canonical and it satisfies Property $(*)$; and
\item $(K_{\cal F'}+\Delta') +F = \pi^*(K_{\cal F}+\Delta)$ where $F \geq 0$ is a $\mathbb R$-divisor on $X'$ whose image in $X$ is contained in the union of non log canonical centres of $(\cal F,\Delta)$. 
\end{enumerate}
\end{definition}

\subsection{Cone theorem}

The goal of this section is to prove the cone theorem for algebraically integrable foliations. To this end, we proceed by induction on the dimension of the ambient variety, by showing, at the same time,  the existence of a Property $(*)$ modification.

\begin{theorem}[Cone Theorem]\label{thm_cone} 
Let $(\cal F,\Delta)$ be a foliated pair on  a normal projective variety 
$X$ of dimension $n$, where $\cal F$ is the foliation induced
by a dominant map $f\colon X \dashrightarrow Y$ between normal varieties and $\Delta\ge 0$.

Then we may write
\[\overline{NE}(X) = \overline{NE}(X)_{K_{\cal F}+\Delta \geq 0}+Z_{-\infty}+\sum \mathbb R_+[\xi_i]\]
where $\xi_i$ is a rational curve tangent $\cal F$
with $0 \leq -(K_{\cal F}+\Delta)\cdot \xi_i \leq 2n$
and $Z_{-\infty}$ is the contribution from $\nlc {(\cal F, \Delta)}$.
\end{theorem}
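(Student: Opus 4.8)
The plan is to prove the Cone Theorem by reduction to the known Cone Theorem for log canonical pairs on the base, exploiting Property $(*)$ and the comparison of $K_{\cal F}+\Delta$ with the moduli part. The overall strategy proceeds by induction on $n = \dim X$, simultaneously establishing the existence of a Property $(*)$ modification (Definition \ref{d_*mod}), since the two statements are intertwined: to run the argument on $X$ we need a Property $(*)$ model, and to produce such a model in higher dimension we invoke the Cone Theorem (and hence contractions) in lower dimension via adjunction to invariant divisors.

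First I would reduce to the case where $(\cal F,\Delta)$ satisfies Property $(*)$ and is induced by an equidimensional projective contraction $f\colon X\to Z$. Using Theorem \ref{t_AK} together with Proposition \ref{p_toroidal} (and the foliated log resolution discussion preceding Lemma \ref{lem_toroidal_lc}), one passes to a birational modification $\pi\colon X'\to X$ where $\cal F'=\pi^{-1}\cal F$ is induced by an equidimensional toroidal contraction $f'\colon X'\to Z'$ with $Z'$ log smooth, and where $(\cal F',\Delta')$ — with $\Delta'$ the strict transform plus the non-invariant exceptional divisors — is log canonical by Lemma \ref{lem_toroidal_lc} and satisfies Property $(*)$. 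The crepancy formula $(K_{\cal F'}+\Delta')+F=\pi^*(K_{\cal F}+\Delta)$ with $F\ge 0$ supported over $\nlc(\cal F,\Delta)$ is exactly the Property $(*)$ modification; one must check $F\ge 0$ using the log canonical inequality on discrepancies, and identify its image with non-lc centres. Curves contracted by $\pi$ that are not covered by the base argument will feed into $Z_{-\infty}$ (their images lie in $\nlc$), while curves mapping isomorphically pull back the cone structure.

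On the Property $(*)$ equidimensional model, Proposition \ref{prop_*comparison} gives $K_{\cal F}+\Delta\sim M_X$, the moduli part of $(X/Z,\Delta+G)$, and $K_{\cal F}+\Delta\sim_{f,\mathbb R}K_X+\Delta+G$. The key observation is that $(K_{\cal F}+\Delta)$-negativity is tested by curves. Decompose $\overline{NE}(X)$ into the part lying over curves in $Z$ (horizontal classes) and the part contracted by $f$ (vertical classes, i.e. classes of curves tangent to $\cal F$ by Lemma \ref{lem_tangency}(3)). Since $K_{\cal F}+\Delta\equiv K_X+\Delta+G$ modulo $f^*(\text{something})$, a vertical extremal ray is $(K_{\cal F}+\Delta)$-negative iff it is $(K_X+\Delta+G)$-negative, and the usual Cone Theorem for the lc pair $(X,\Delta+G)$ relative to $f$ (applicable since $(X,\Delta+G)$ is lc by Lemma \ref{l_star}(1), once $G$ is chosen appropriately, e.g.\ boundary-like) produces countably many $(K_X+\Delta+G)$-negative extremal rays spanned by rational curves in the fibres, with the length bound $-(K_X+\Delta+G)\cdot\xi\le 2\dim(X/Z)\le 2n$. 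For horizontal curves, one pulls back an ample divisor from $Z$ to see that $K_{\cal F}+\Delta$ is relatively trivial over $Z$, so no new negative rays arise from horizontal directions beyond those already accounted for. The $\xi_i$ are tangent to $\cal F$ by construction (fibre curves of an equidimensional contraction inducing $\cal F$), and pushing forward through the Property $(*)$ modification preserves tangency by Lemma \ref{lem_tangency}(1), with the caveat that non-equidimensionality of $\pi$ could introduce curves whose images are not tangent — but those are exactly the ones dumped into $Z_{-\infty}$.

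The main obstacle I anticipate is the interplay between the induction and the adjunction/restriction machinery: to build the Property $(*)$ modification when $Z'$ is only reached after base change, or when invariant divisors $T\subset X$ must be analysed, one applies Proposition \ref{prop_inv_adj} to restrict $(\cal F,\Delta)$ to the normalisation $S$ of $T$, where the induction hypothesis supplies the Cone Theorem and a Property $(*)$ modification for $(\cal F_S,\Delta_S)$; lifting extremal rays and tangent curves back up via Lemma \ref{lem_tangency}(4) and reconciling the non-lc loci on $S$ with those on $X$ is delicate. A secondary difficulty is the careful bookkeeping of which contracted curves land in $\overline{NE}(X)_{K_{\cal F}+\Delta\ge 0}$ versus $Z_{-\infty}$ versus the $\mathbb R_+[\xi_i]$, and ensuring the length estimate $-(K_{\cal F}+\Delta)\cdot\xi_i\le 2n$ survives the push-forward (it does, since intersection with the negative divisor can only decrease, but one must verify $\xi_i$ is not contracted by $\pi$). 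Finally, handling the case where $f$ is merely a dominant rational map rather than a morphism requires the preliminary passage to an equidimensional model, which is precisely where Theorem \ref{t_AK} and the tangency formalism of Section \ref{s_lcf} do the work.
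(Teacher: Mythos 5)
Your proposal gets the scaffolding right — the mutual induction between the cone theorem and the existence of Property~$(*)$ modifications, and the role of adjunction (Proposition~\ref{prop_inv_adj}) to restrict to invariant divisors and apply the inductive hypothesis there, both mirror the paper's structure via Lemmas~\ref{l_prop*} and~\ref{l_cone}. But the core of your argument for producing the tangent rational curves and the length bound is different from the paper's, and it has a genuine gap.

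The gap is in the sentence ``For horizontal curves, one pulls back an ample divisor from $Z$ to see that $K_{\cal F}+\Delta$ is relatively trivial over $Z$, so no new negative rays arise from horizontal directions.'' This is precisely the content that must be proved: that every $(K_{\cal F}+\Delta)$-negative extremal ray of $\overline{NE}(X)$ is contracted by $f$. It does not follow from $K_{\cal F}+\Delta \sim_{f,\mathbb R} K_X+\Delta+G$, which only controls vertical curves, and the relative cone theorem for $(X,\Delta+G)$ over $Z$ only produces rays in $\overline{NE}(X/Z)$, which is a face of $\overline{NE}(X)$ but need not contain all of $\overline{NE}(X)_{K_{\cal F}+\Delta<0}$. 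Note also that the paper's Lemma~\ref{lem_special_cone} (the version of the cone theorem closest to what you invoke) explicitly assumes the extra hypothesis that every negative extremal ray already sits inside $\overline{NE}(E)$ for some divisor $E$ with $m_E\Delta=\epsilon(E)$, exactly because the claim you assert does not come for free. The paper's actual proof (Lemma~\ref{l_cone}) takes a negative exposed ray $R$ with supporting hyperplane $H_R=K_{\cal F}+\Delta+A$ and splits into two cases: if $H_R$ is not big, it applies the bend-and-break--type estimate \cite[Corollary~2.28]{Spicer20} on $X$ itself to directly produce a rational curve tangent to $\cal F$ with $H_R\cdot\xi=0$ and $-(K_{\cal F}+\Delta)\cdot\xi\le 2n$; if $H_R$ is big, it locates a log canonical centre $W_0$ of $(\cal F,\Delta+\lambda S)$ minimal with respect to carrying $R$, takes a Property~$(*)$ modification extracting a divisor over $W_0$, and applies induction via adjunction to that divisor. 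Your argument has no replacement for the bend-and-break step, which is unavoidable: without it there is nothing forcing the negative part of the cone into the fibres.

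A secondary issue: you begin by replacing $(X,\cal F,\Delta)$ with an equidimensional Property~$(*)$ model $X'$ and then push the cone decomposition back. Pushing forward along a birational morphism does not preserve extremal rays of $\overline{NE}$, and the $\pi$-contracted curves are sent to points, not to generators of $Z_{-\infty}$; moreover a Property~$(*)$ modification is not in general an isomorphism away from $\nlc(\cal F,\Delta)$, so crediting the discrepancy to $Z_{-\infty}$ does not work. The paper avoids this by running the cone argument on $X$ directly and only passing to a Property~$(*)$ modification to cut down to a divisor in the big case; that is also where the minimality of $W_0$ is used to exclude the $F$-components, which is the mechanism that actually isolates the $Z_{-\infty}$ contribution.
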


\begin{theorem}[Existence of Property $(*)$ modifications]
\label{thm_prop*}
Let $(\cal F,\Delta)$ be a foliated pair on  a normal projective variety 
$X$ of dimension $n$, where $\cal F$ is the foliation induced
by a dominant map $f\colon X \dashrightarrow Y$ between normal varieties and $\Delta\ge 0$.

Then  $(\cal F, \Delta)$ admits a Property $(*)$ modification $\pi\colon X'\to X$. 

Moreover, for any log canonical centre $W$  of $(\cal F,\Delta)$,  we can take $\pi$ so that 
if $\cal F'=\pi^{-1}\cal F$ and $\Delta'\coloneqq \pi_*^{-1}\Delta+\sum \epsilon(E)E$, 
where the sum is taken over all the $\pi$-exceptional divisors, then 
 there exists a codimension one log canonical centre $S$ of $(\cal F',\Delta')$ such that $W= \pi(S)$.
\end{theorem}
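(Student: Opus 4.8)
The plan is to prove Theorem \ref{thm_cone} and Theorem \ref{thm_prop*} simultaneously by induction on $n=\dim X$, and to construct the Property $(*)$ modification at level $n$ while invoking the Cone Theorem and Theorem \ref{thm_prop*} only in dimension $<n$ (within the inductive step one proves Theorem \ref{thm_prop*} first, then feeds it into the proof of Theorem \ref{thm_cone}). First I would reduce to the case that $\cal F$ is induced by a projective contraction $f\colon X\to Z$: resolving the indeterminacy of $f$ and passing to a higher model is harmless, since tangency and discrepancies are controlled under birational morphisms by Lemma \ref{lem_tangency} and the definitions of Section \ref{s_lcf}; as in Definition \ref{d_*mod} I would also arrange that no component of $\Delta$ is $\cal F$-invariant.

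The candidate modification comes from weak semistable reduction. Applying Theorem \ref{t_AK} to $(X,\Delta)$ and $f$ produces projective birational morphisms $\beta_0\colon X_0\to X$ and $\alpha_0\colon Z_0\to Z$, an equidimensional toroidal contraction $f_0\colon X_0\to Z_0$ inducing $\cal F_0\coloneqq\beta_0^{-1}\cal F$, with $X_0$ having quotient toric, hence klt, singularities, $(Z_0,\Sigma_{Z_0})$ log smooth, and $\Supp\beta_{0*}^{-1}\Delta+\Exc\beta_0\subset\Sigma_{X_0}$. Setting $\Delta_0\coloneqq\beta_{0*}^{-1}\Delta+\sum\epsilon(E)E$ (the sum over $\beta_0$-exceptional $E$, i.e.\ the horizontal part of $\Sigma_{X_0}$) and choosing the $\cal F_0$-invariant divisor $G_0\ge 0$ so that $\Sigma_{X_0}=\Delta_0+G_0$ and $f_0^{-1}(\Sigma_{Z_0})\le G_0$, Proposition \ref{p_toroidal} shows that $(X_0/Z_0,\Delta_0+G_0)$ satisfies Property $(*)$, hence $(\cal F_0,\Delta_0)$ satisfies Property $(*)$ with associated divisor $G_0$, while Lemma \ref{lem_toroidal_lc} shows that $(\cal F_0,\Delta_0)$ is log canonical. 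This already yields conditions (1)--(3) of Definition \ref{d_*mod}, and Proposition \ref{prop_*comparison} gives $K_{\cal F_0}+\Delta_0\sim_{f_0,\mathbb R}K_{X_0}+\Delta_0+G_0$, so that the foliated discrepancy calculus on $X_0$ becomes that of the classical log canonical pair $(X_0,\Delta_0+G_0)$ relative to $Z_0$.

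For condition (4), write $K_{\cal F_0}+\Delta_0=\beta_0^*(K_{\cal F}+\Delta)+\sum\bigl(a(E,\cal F,\Delta)+\epsilon(E)\bigr)E$. What is required is that $\beta_0$ extract no divisor with $a(E,\cal F,\Delta)+\epsilon(E)>0$ and centre outside $\nlc{(\cal F,\Delta)}$; but the equidimensionalisation in Theorem \ref{t_AK} typically does extract such divisors, whereas the divisors with centre in $\nlc{(\cal F,\Delta)}$ already contribute an effective piece there. To remedy this I would run the $(K_{X_0}+\Delta_0+G_0)$-MMP over $X$ (equivalently, by the comparison above, the foliated $(K_{\cal F_0}+\Delta_0)$-MMP): a superfluous divisor has strictly positive coefficient in the $\beta_0$-exceptional divisor $K_{\cal F_0}+\Delta_0-\beta_0^*(K_{\cal F}+\Delta)$, so by the negativity lemma it is covered by $(K_{\cal F_0}+\Delta_0)$-negative curves contracted by $\beta_0$ and contained in fibres of $f_0$; this MMP exists and terminates by standard facts on log canonical modifications over a base, and preserves Property $(*)$ and equidimensionality by Proposition \ref{prop_MMP_preserves_*}. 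Its output $\pi\colon X'\to X$ has $X'$ klt, $\cal F'\coloneqq\pi^{-1}\cal F$ induced by an equidimensional morphism $X'\to Z'$, $(\cal F',\Delta')$ log canonical and Property $(*)$ with $\Delta'=\pi_*^{-1}\Delta+\sum\epsilon(E)E$, and, applying the negativity lemma once more to the $\pi$-nef divisor $K_{\cal F'}+\Delta'-\pi^*(K_{\cal F}+\Delta)$ together with log canonicity of $(\cal F,\Delta)$ away from $\nlc{(\cal F,\Delta)}$, one obtains $\pi^*(K_{\cal F}+\Delta)=K_{\cal F'}+\Delta'+F$ with $F\ge 0$ supported over $\nlc{(\cal F,\Delta)}$; this is the desired Property $(*)$ modification. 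For the final assertion, given a log canonical centre $W$ of $(\cal F,\Delta)$ I would, before applying Theorem \ref{t_AK}, first pass to a model on which a log canonical place $E_W$ of $(\cal F,\Delta)$ with centre $W$ appears as a divisor and carry it along as a component of $\Sigma_{X_0}=\Delta_0+G_0$; since $E_W$ has zero coefficient in $K_{\cal F_0}+\Delta_0-\beta_0^*(K_{\cal F}+\Delta)$ it is not contracted by the MMP and descends to a prime divisor $S$ on $X'$ with $\pi(S)=W$ and $a(S,\cal F',\Delta')=a(E_W,\cal F,\Delta)=-\epsilon(E_W)=-\epsilon(S)$, hence a codimension one log canonical centre of $(\cal F',\Delta')$.

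The step I expect to be the real obstacle is the construction of $\pi$ from the toroidal model $X_0$ in the paragraph on condition (4): one must reconcile the three bases in play --- the morphism $X'\to Z'$, whose equidimensionality Proposition \ref{prop_MMP_preserves_*} preserves only for an MMP over $Z_0$; the variety $X$, over which the superfluous divisors must be contracted; and the fibres of $f_0$, along which the relevant curves must be $(K_{\cal F_0}+\Delta_0)$-negative --- so that the whole construction stays inside the classical MMP and the Cone Theorem in dimension $n$ is never invoked. Dealing carefully with the non-log-canonical locus, where the lower-dimensional cases enter (for instance through the adjunction of Proposition \ref{prop_inv_adj}), is part of the same difficulty.
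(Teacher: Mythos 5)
Your overall architecture matches the paper's: reduce to an equidimensional toroidal model via Theorem \ref{t_AK}, set up $\Delta_0, G_0$ so that Proposition \ref{p_toroidal} and Lemma \ref{lem_toroidal_lc} give Property $(*)$ and log canonicity, then contract the superfluous exceptional divisors by a suitable MMP and invoke Proposition \ref{prop_MMP_preserves_*} and the negativity lemma to verify Definition \ref{d_*mod}. The final paragraph about extracting a log canonical place for $W$ before running the MMP also agrees with the paper. However, there is a genuine gap in the MMP step, and you have in fact identified it yourself as ``the real obstacle'' without resolving it.

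The gap is this: you claim that the components of $K_{\cal F_0}+\Delta_0-\beta_0^*(K_{\cal F}+\Delta)$ with positive coefficient are, by the negativity lemma, covered by $(K_{\cal F_0}+\Delta_0)$-negative curves that are simultaneously $\beta_0$-contracted \emph{and contained in fibres of $f_0$}. The negativity (covering) lemma applied to $\beta_0$ produces $\beta_0$-contracted curves $C$ on which the exceptional divisor is negative, and since $\beta_0^*(K_{\cal F}+\Delta)\cdot C=0$ this gives $(K_{\cal F_0}+\Delta_0)\cdot C<0$ — but it gives \emph{no} information about $f_0(C)$. A $\beta_0$-exceptional divisor need not be $\cal F_0$-invariant (indeed the relevant ``superfluous'' divisors have $\epsilon(E)=1$ and $a(E,\cal F,\Delta)>1$, so they are horizontal), and horizontal exceptional divisors over a single point of $X$ can still dominate positive-dimensional loci in $Z_0$, so the covering curves need not be vertical. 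Without verticality, the relative linear equivalence $K_{\cal F_0}+\Delta_0\sim_{f_0,\mathbb R}K_{X_0}+\Delta_0+G_0$ of Proposition \ref{prop_*comparison} gives no control over the $(K_{X_0}+\Delta_0+G_0)$-MMP over $X$: the two ``MMPs'' you declare ``equivalent'' agree only over $Z_0$, and Proposition \ref{prop_MMP_preserves_*}, which you need to preserve Property $(*)$ and equidimensionality, only applies to MMPs over $Z_0$. This is exactly the place where the paper's proof is genuinely inductive: Lemma \ref{lem_special_cone} establishes that every $(K_{\cal F_0}+\Delta_0)$-negative extremal ray over $X$ lies in some $E$ with $m_E\Delta_0=\epsilon(E)$, restricts to $E$ via Proposition \ref{prop_inv_adj}, and invokes Theorem \ref{thm_cone} in dimension $\le n-1$ to produce a \emph{tangent} rational curve, whence verticality by Lemma \ref{lem_tangency}(3)--(4); Lemma \ref{lem_special_contraction} then yields existence of the contractions and flips. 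Your proof cannot dispense with this: the negativity lemma is not a substitute for the inductive Cone theorem here, and the statement you are proving is itself half of a simultaneous induction with the Cone theorem for this very reason.
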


In order to prove Theorem \ref{thm_cone} and Theorem \ref{thm_prop*}, we will prove the following

\begin{lemma}\label{l_prop*}
Assume Theorem \ref{thm_cone} in dimension $\leq n-1$.

Then Theorem \ref{thm_prop*} holds in dimension $n$. 
\end{lemma}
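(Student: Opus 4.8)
The plan is to reduce the construction of a Property $(*)$ modification to a combination of the semistable reduction of Abhyankar–Karu (Theorem \ref{t_AK}), the basic toroidal facts already established (Proposition \ref{p_toroidal}, Lemma \ref{lem_toroidal_lc}), and the Cone Theorem in lower dimension, which we are allowed to assume. First I would take a foliated log resolution $m\colon X_1\to X$ of $(\cal F,\Delta)$: by Theorem \ref{t_AK} applied to the rational map $f\colon X\dashrightarrow Y$ (after compactifying/flattening so that it becomes a contraction), we obtain toroidal pairs $(X_1,\Sigma_{X_1})$ and $(Z_1,\Sigma_{Z_1})$ with $(Z_1,\Sigma_{Z_1})$ log smooth, a toroidal equidimensional contraction $f_1\colon X_1\to Z_1$ inducing $\cal F_1:=m^{-1}\cal F$, with $X_1$ having quotient toric singularities (hence klt and $\mathbb Q$-factorial) and $\Supp(m_*^{-1}\Delta)+\Exc m\subset \Sigma_{X_1}$. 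Setting $\Delta_1:=m_*^{-1}\Delta+\sum\epsilon(E)E$ over the $m$-exceptional divisors, Proposition \ref{p_toroidal} shows $(X_1/Z_1,\Delta_1+G_1)$ satisfies Property $(*)$ for the appropriate $\cal F_1$-invariant (= vertical) divisor $G_1$, so $(\cal F_1,\Delta_1)$ satisfies Property $(*)$ in the sense of Definition \ref{d_fstar}, and Lemma \ref{lem_toroidal_lc} shows $(\cal F_1,\Delta_1)$ is log canonical. This already gives (1), (2), (3) of Definition \ref{d_*mod}; what remains is the crepancy/effectivity condition (4): we must ensure that writing $K_{\cal F_1}+\Delta_1+F_1=m^*(K_{\cal F}+\Delta)$, the divisor $F_1$ is effective with image inside the non-lc centres of $(\cal F,\Delta)$.

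The heart of the argument is then to fix up (4). Write $K_{\cal F_1}+\Delta_1 = m^*(K_{\cal F}+\Delta)+\sum a(E,\cal F,\Delta)E - \sum(\text{terms from }\epsilon)$; the divisors where $F_1$ could fail to be effective are exactly those $m$-exceptional $E$ with $a(E,\cal F,\Delta)<-\epsilon(E)$, i.e. the log canonical places of non-lc centres, together with strict transforms of components where the discrepancy computation goes wrong; by construction of the foliated log resolution these are contained in the preimage of $\nlc(\cal F,\Delta)$. The subtlety is that, a priori, $F_1$ could also acquire an $\cal F_1$-invariant negative part or have exceptional components over lc (not non-lc) centres with slightly negative coefficient; here one uses that the $m$-exceptional divisors we have \emph{added to the boundary with coefficient $\epsilon(E)$} are precisely those not already accounted for, and a negativity-lemma style argument (as in the proof of Proposition \ref{prop_MMP_preserves_*}) combined with the log canonicity of $(\cal F_1,\Delta_1)$ from Lemma \ref{lem_toroidal_lc} forces $F_1\ge 0$ away from $m^{-1}(\nlc(\cal F,\Delta))$. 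The role of the inductive hypothesis — the Cone Theorem in dimension $\le n-1$ — enters when we need to guarantee that the resolution can be chosen so that a \emph{given} log canonical centre $W$ of $(\cal F,\Delta)$ is dominated by a codimension-one lc centre $S$ of $(\cal F',\Delta')$: after passing to the toroidal model, $W$ lifts to (a component of) $f_1^{-1}(P)$ for a stratum $P$ of $\Sigma_{Z_1}$ by Lemma \ref{l_lcc}, but to arrange that the lc place sits as an honest \emph{divisor} $S$ on $X'$ with $\cal F'$-invariant structure and so that the \emph{restricted} foliated pair $(\cal F'_S,\Delta'_S)$ (Proposition \ref{prop_inv_adj}) again admits a Property $(*)$ modification — which is what later arguments need — we must run a relative MMP/blow-up over $Z_1$ on $S$, and controlling that the extremal contractions appearing are tangent to the restricted foliation requires the Cone Theorem on $S$, whose dimension is $<n$.

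So the key steps, in order, are: (i) compactify and apply Theorem \ref{t_AK} to get a toroidal equidimensional model $f_1\colon X_1\to Z_1$ dominating the foliated log resolution of $(\cal F,\Delta)$; (ii) verify (1)–(3) of Definition \ref{d_*mod} via Proposition \ref{p_toroidal} and Lemma \ref{lem_toroidal_lc}, and identify the vertical divisor $G$; (iii) analyse the crepancy divisor $F$ and show via a negativity-lemma argument that its negative part is supported over $\nlc(\cal F,\Delta)$, establishing (4); (iv) for the ``moreover'' clause, given an lc centre $W$, use Lemma \ref{l_lcc} to realise it (on a further toroidal/MMP modification over the base) as the image of a stratum, and use Proposition \ref{prop_inv_adj} together with the Cone Theorem in dimension $\le n-1$ (applied to the restricted foliation on the candidate divisor $S$) to promote this to a genuine codimension-one log canonical centre $S$ of $(\cal F',\Delta')$ with $\pi(S)=W$ and the desired Property $(*)$ structure. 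The main obstacle I expect is step (iv): ensuring that the log canonical \emph{place} of $W$ can be extracted as a divisor whose restricted foliated pair is still well-behaved — in particular still log canonical and still admitting its own Property $(*)$ modification — without destroying the Property $(*)$ and equidimensionality already achieved on $X'$; this is exactly where the inductive structure (Cone Theorem below dimension $n$ feeding the construction in dimension $n$) is essential, and where one must be careful that the MMP steps used to extract $S$ do not create new non-lc centres or break equidimensionality, which is controlled by Proposition \ref{prop_MMP_preserves_*} and Remark \ref{r_maximal}.
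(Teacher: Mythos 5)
There is a genuine gap, and it concerns the core mechanism of the proof rather than a detail.

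\paragraph{The toroidal model does not satisfy condition (4), and the negativity lemma does not rescue it.}
You claim that after applying Theorem~\ref{t_AK} and Proposition~\ref{p_toroidal} to get a toroidal equidimensional model $X_1$, conditions (1)--(3) of Definition~\ref{d_*mod} already hold and only (4) remains, and that (4) then follows from ``a negativity-lemma style argument''. This is incorrect for two reasons. First, you have the sign of the problem backwards: writing $K_{\cal F_1}+\Delta_1 = m^*(K_{\cal F}+\Delta) + \sum (a(E,\cal F,\Delta)+\epsilon(E))E$, the divisor $F_1$ has coefficient $-a(E)-\epsilon(E)$ along $E$. So $F_1$ fails to be effective precisely on the exceptional divisors with $a(E)>-\epsilon(E)$, i.e.\ the \emph{well-behaved} ones (which a toroidal resolution will introduce in abundance, e.g.\ blowing up a smooth point of an algebraically integrable rank-$2$ foliation on a threefold produces an invariant $E$ with $a(E)=2$), not the lc places of non-lc centres, which have $a(E)<-\epsilon(E)$ and hence make $F_1$ \emph{more} positive. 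Second, the negativity lemma requires $-(F_1)$, equivalently $K_{\cal F_1}+\Delta_1$, to be nef over $X$, and on the toroidal model this is simply false; there is nothing to feed into the negativity lemma.

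\paragraph{The missing step is the MMP over $X$, and this is where the inductive Cone Theorem is actually used.}
The paper's proof fixes (4) by running a $(K_{\cal F_W}+\Delta_W)$-MMP \emph{over $X$} starting from the toroidal model $W$. It is only after this MMP that $K_{\cal F'}+\Delta'$ is nef over $X$ and the negativity lemma applies to conclude $F\ge 0$ with support over $\nlc(\cal F,\Delta)$. To run this MMP one needs Lemmas~\ref{lem_special_cone} and~\ref{lem_special_contraction}: since every $\pi$-exceptional divisor $E$ appears in $\Delta_W$ with coefficient $\epsilon(E)$, adjunction (Proposition~\ref{prop_inv_adj}) plus the Cone Theorem in dimension $\le n-1$ applied to the restricted foliation on $E$ shows that every $(K_{\cal F_W}+\Delta_W)$-negative extremal ray over $X$ is spanned by a rational curve contracted by $g\colon W\to Z_W$. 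This is exactly what guarantees that the MMP over $X$ is simultaneously an MMP over $Z_W$, so Proposition~\ref{prop_MMP_preserves_*} preserves Property~$(*)$ and equidimensionality along the way, and termination follows because each step is also a step of a $(K_W+\Delta_W+G)$-MMP over $X$. Your proposal relegates the lower-dimensional Cone Theorem entirely to the ``moreover'' clause, and frames that part as running an MMP ``on $S$'' to extract a log canonical place; that is not what is needed, and it misses that the inductive Cone Theorem is indispensable already for the basic construction. The ``moreover'' clause in the paper is the easy add-on: one first extracts a log canonical place over $W$ and then runs the same MMP over $X$, observing that this place has $a=-\epsilon$ and so survives.

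=== END ===
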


\begin{lemma}
\label{l_cone} 
Assume Theorem \ref{thm_prop*} in dimension $\leq n$. 

Then Theorem \ref{thm_cone} holds in dimension $n$. 
\end{lemma}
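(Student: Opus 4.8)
The statement to prove is Lemma \ref{l_cone}: assuming Theorem \ref{thm_prop*} (existence of Property $(*)$ modifications) in dimension $\leq n$, deduce the Cone Theorem \ref{thm_cone} in dimension $n$. The strategy is to reduce the cone theorem for the foliated pair $(\cal F,\Delta)$ to the classical cone theorem for an ordinary log canonical pair on a birational model, by passing to a Property $(*)$ modification and using Proposition \ref{prop_*comparison} to rewrite $K_{\cal F}+\Delta$ in terms of $K_X+\Delta+G$. First I would apply Theorem \ref{thm_prop*} to obtain $\pi\colon X'\to X$ with $\cal F'=\pi^{-1}\cal F$ induced by an equidimensional projective contraction $f'\colon X'\to Z'$, with $(\cal F',\Delta')$ log canonical satisfying Property $(*)$, and $K_{\cal F'}+\Delta'+F=\pi^*(K_{\cal F}+\Delta)$ where $F\geq 0$ has image in the union of non lc centres of $(\cal F,\Delta)$. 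Let $G\geq 0$ be the associated $\cal F'$-invariant divisor, so $(X'/Z',\Delta'+G)$ satisfies Property $(*)$; by Lemma \ref{l_star} the pair $(X',\Delta'+G)$ is log canonical, and by Proposition \ref{prop_*comparison}(2) we have $K_{\cal F'}+\Delta'\sim_{f',\mathbb R}K_{X'}+\Delta'+G$.

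**Main steps.** (1) Run the classical cone theorem on $X'$ for the log canonical pair $(X',\Delta'+G)$, but in the \emph{relative} form over $Z'$ combined with the absolute form: since $\cal F'$-negative rays must be $f'$-vertical (anything positive on the generic fibre pairs nonnegatively with $K_{\cal F'}+\Delta'$ by the relative triviality $K_{\cal F'}+\Delta'\sim_{f',\mathbb R}K_{X'}+\Delta'+G$ — here one needs that the moduli part $M_{X'}=K_{\cal F'}+\Delta'$ is pulled back from $Z'$ up to $\sim_{\mathbb R}$, which is what Proposition \ref{prop_*comparison}(1) gives, $K_{\cal F'}+\Delta'\sim M_{X'}$), a $(K_{\cal F'}+\Delta')$-negative extremal ray of $\overline{NE}(X')$ is the same as a $(K_{X'}+\Delta'+G)$-negative ray contracted over $Z'$, hence is spanned by a rational curve $C'$ contained in a fibre of $f'$ with $0\leq -(K_{X'}+\Delta'+G)\cdot C'\leq 2\dim(X'/Z')\leq 2n$; such $C'$ is by definition tangent to $\cal F'$ since $f'$ is equidimensional. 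This produces the decomposition $\overline{NE}(X')=\overline{NE}(X')_{K_{\cal F'}+\Delta'\geq 0}+\sum\mathbb R_+[\xi_i']$ (no $Z_{-\infty}$ yet, since $(\cal F',\Delta')$ is log canonical). (2) Push forward along $\pi_*\colon N_1(X')\to N_1(X)$, which is surjective. Curves in $\overline{NE}(X')_{K_{\cal F'}+\Delta'\geq 0}$ map into $\overline{NE}(X)_{K_{\cal F}+\Delta\geq 0}+Z_{-\infty}$ after accounting for the correction term $F$: since $\pi_*$ of a curve $\gamma'$ satisfies $(K_{\cal F}+\Delta)\cdot\pi_*\gamma'=(K_{\cal F'}+\Delta'+F)\cdot\gamma'$, curves meeting $\operatorname{Supp}F$ negatively (which lie over non lc centres) contribute to $Z_{-\infty}$, and the rest stay in the nonnegative part. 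The images $\xi_i=\pi_*\xi_i'$ (for those $\xi_i'$ not contracted by $\pi$) are rational curves tangent to $\cal F$ by Lemma \ref{lem_tangency}(1), with the bound $0\leq -(K_{\cal F}+\Delta)\cdot\xi_i\leq 2n$ preserved because $F\cdot\xi_i'\geq 0$ when $\xi_i'$ is not contained in $\operatorname{Supp}F$. (3) Handle the $\pi$-contracted rays and the curves in fibres of $\pi$: these, together with curves over non lc centres, assemble into $Z_{-\infty}$; one checks $Z_{-\infty}$ as defined (contribution from $\nlc(\cal F,\Delta)$) absorbs all of them using that $\pi$-exceptional curves lie over $\operatorname{Supp}F\subset$ preimage of non lc centres, or are handled by the classical relative cone theorem for $\pi$ itself applied to a suitable lc pair on $X'$.

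**The main obstacle.** The delicate point is bookkeeping the non log canonical contribution $Z_{-\infty}$ and verifying that pushing forward the clean decomposition on $X'$ (where everything is log canonical) genuinely yields the stated decomposition on $X$ with $Z_{-\infty}$ exactly the span of curves in $\nlc(\cal F,\Delta)$ — in particular that no $(K_{\cal F}+\Delta)$-negative ray on $X$ is lost or mislabeled, and that the extremal rays $\mathbb R_+[\xi_i]$ are genuinely extremal in $\overline{NE}(X)$ and locally discrete in the $(K_{\cal F}+\Delta)<0$ region. This requires a careful comparison of the cones under $\pi_*$, using the projection formula for the divisor $F$ and the fact (from Theorem \ref{thm_prop*}) that $\operatorname{Supp}F$ maps into the union of non lc centres; I would also need to invoke the relative cone theorem for the classical pair $(X',\Delta'+G)$ over $Z'$ in a form that gives length bounds on fibre curves (the $2\dim(X'/Z')$ bound), which is standard but must be cited carefully. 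A secondary technical nuisance is that $X'$ need only be klt (not $\mathbb Q$-factorial a priori — though Property $(*)$ modifications here do give $\mathbb Q$-factoriality via the weak semistable reduction), so one should confirm the classical cone theorem applies in the form needed; this is fine since $(X',\Delta'+G)$ is log canonical and $\mathbb Q$-factorial.
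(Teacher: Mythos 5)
Your plan hinges on the claim in Step (1) that every $(K_{\cal F'}+\Delta')$-negative extremal ray of $\overline{NE}(X')$ is $f'$-vertical, and your justification for this is a misreading of Proposition~\ref{prop_*comparison}. You write ``one needs that the moduli part $M_{X'}=K_{\cal F'}+\Delta'$ is pulled back from $Z'$ up to $\sim_{\mathbb R}$, which is what Proposition~\ref{prop_*comparison}(1) gives,'' but that proposition says only that $K_{\cal F'}+\Delta'\sim M_{X'}$, where $M_{X'}$ is the moduli part \emph{on $X'$} --- by construction it differs from $K_{X'}+\Delta'+G$ by a pullback from $Z'$ (that is Remark~\ref{r_induced}(2)), but $M_{X'}$ itself is not a pullback. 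Indeed, if $M_{X'}$ were a pullback from $Z'$ the entire positivity question studied in this paper would be vacuous. Thus the relative equivalence $K_{\cal F'}+\Delta'\sim_{f',\mathbb R}K_{X'}+\Delta'+G$ controls the intersection of $K_{\cal F'}+\Delta'$ only against $f'$-vertical curves, and says nothing about whether an $f'$-horizontal curve can be $(K_{\cal F'}+\Delta')$-negative. The assertion that $(K_{\cal F'}+\Delta')$-negative extremal rays are vertical is in fact a nontrivial \emph{consequence} of the Cone Theorem (combine the statement with Lemma~\ref{lem_tangency}(3)); you cannot assume it as an input.

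This is precisely why the paper's proof is structured very differently. It fixes a $(K_{\cal F}+\Delta)$-negative exposed ray $R$, chooses a nef supporting divisor $H_R=K_{\cal F}+\Delta+A$, and splits into two cases. When $H_R$ is not big, it applies the foliated bend-and-break result \cite[Corollary~2.28]{Spicer20} directly to produce a rational curve tangent to $\cal F$ through a general point with the correct length bound; no reduction to a vertical ray is attempted. When $H_R$ is big, the ray pairs negatively against some prime divisor $S$, and the paper descends to a minimal log canonical centre $W_0$ of $(\cal F,\Delta+\lambda_0 S)$ dominating $R$, takes a Property $(*)$ modification extracting $W_0$ as a divisor, shows via minimality that the lifted ray is not supported on the non-lc locus, and then applies foliated adjunction (Proposition~\ref{prop_inv_adj}) together with the Cone Theorem in dimension $\le n-1$ on a divisor containing the ray. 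This machinery --- the lc-centre bootstrap, the extra hypothesis in Lemma~\ref{lem_special_cone} about a divisor $E$ carrying the ray with $m_E\Delta=\epsilon(E)$, and the bend-and-break fallback --- is exactly what is needed to get around the verticality issue that your argument assumes away. Your Steps (2) and (3) (pushing forward and bookkeeping $Z_{-\infty}$) are also left quite vague, but the decisive gap is Step (1): the reduction to the classical relative cone theorem over $Z'$ is not available, because you have not shown the relevant rays live in $\overline{NE}(X'/Z')$.
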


We first prove the following

\begin{lemma}
\label{lem_special_cone}
Assume Theorem \ref{thm_cone} in dimension $\leq n-1$.

Let $(\cal F,\Delta)$ be a log canonical foliated pair on  a normal projective $\mathbb Q$-factorial klt variety 
$X$ of dimension $n$, where $\cal F$ is the foliation induced
by a projective contraction $f\colon X \to Y$ between normal varieties and $\Delta\ge 0$. Assume that  $(\cal F, \Delta)$ satisfies Property $(*)$.
Suppose moreover that for any $(K_{\cal F}+\Delta)$-negative extremal ray, there exists
a divisor $E$ such that $R$ is contained in the image of $\overline{NE}(E) \rightarrow \overline{NE}(X)$
and such that $m_E\Delta = \epsilon(E)$.

Then we may write
\[\overline{NE}(X) = \overline{NE}(X)_{K_{\cal F}+\Delta \geq 0}+\sum_{i\ge 1} \mathbb R_+[\xi_i]\]
where $\xi_1,\xi_2,\dots$ are rational curves tangent to $\cal F$
with $0 \leq -(K_{\cal F}+\Delta)\cdot \xi_i \leq 2\dim X$. 

Moreover, if $C$ is a curve such that $[C] \in \mathbb R_+[\xi_i]$ then $C$ is contracted by $f$.
\end{lemma}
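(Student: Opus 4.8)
The plan is to reduce the cone theorem for $(\cal F,\Delta)$ on $X$ to the cone theorem in dimension $\le n-1$, applied to the restricted foliations on the divisors $E$ produced by the hypothesis. First I would set up the bookkeeping: given a $(K_{\cal F}+\Delta)$-negative extremal ray $R$, choose by hypothesis a prime divisor $E$ with $m_E\Delta=\epsilon(E)$ such that $R$ lies in the image of $\overline{NE}(E)\to\overline{NE}(X)$, and let $\nu\colon S\to E$ be the normalisation. By Proposition \ref{prop_inv_adj} there is a foliated pair $(\cal F_S,\Delta_S)$ with $\Delta_S\ge 0$ and $\nu^*(K_{\cal F}+\Delta)=K_{\cal F_S}+\Delta_S$, and by part (2) of that Proposition $(\cal F_S,\Delta_S)$ is again log canonical; moreover $\cal F_S$ is induced by a (rational) dominant map, so the inductive cone theorem (Theorem \ref{thm_cone} in dimension $\le n-1$) applies to $(\cal F_S,\Delta_S)$ on $S$. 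Since $R$ is in the image of $\overline{NE}(S)\to \overline{NE}(X)$ and $K_{\cal F}+\Delta$ is negative on $R$, there is a class in $\overline{NE}(S)_{K_{\cal F_S}+\Delta_S<0}$ mapping into $R$; because $(\cal F_S,\Delta_S)$ is log canonical, the $Z_{-\infty}$ term in its cone decomposition is empty, so $R$ is generated by the image of a rational curve $\xi$ tangent to $\cal F_S$ with $0\le -(K_{\cal F_S}+\Delta_S)\cdot\xi\le 2\dim S < 2\dim X$. By Lemma \ref{lem_tangency}(4), $\nu(\xi)$ is tangent to $\cal F$, and the projection formula gives $-(K_{\cal F}+\Delta)\cdot\nu(\xi)=-(K_{\cal F_S}+\Delta_S)\cdot\xi$ lying in the required range. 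This produces, for each negative extremal ray, a rational curve of the desired type spanning it.

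The next step is to upgrade this ray-by-ray statement to the global decomposition $\overline{NE}(X)=\overline{NE}(X)_{K_{\cal F}+\Delta\ge 0}+\sum\mathbb R_+[\xi_i]$ with local discreteness of the rays, i.e. a genuine cone theorem rather than just a description of each individual ray. For this I would run the standard Mori-style bend-and-break/cone machinery: the bound $-(K_{\cal F}+\Delta)\cdot\xi_i\le 2n$ together with the projectivity of $X$ shows that the extremal rays can only accumulate towards the hyperplane $\{K_{\cal F}+\Delta=0\}$, by the usual argument comparing against an ample divisor $H$ and using that for any $\varepsilon>0$ there are only finitely many classes of curves of bounded $H$-degree; since each extremal ray is spanned by such a low-degree rational curve, only finitely many rays meet any region bounded away from $\{K_{\cal F}+\Delta=0\}$. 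This is the part where one invokes the existing foliated cone-theorem technology (as in \cite{bm16,Spicer20,CS21}) adapted to the algebraically integrable setting, or alternatively cites the cone theorem for the underlying klt pair on $X$ combined with Proposition \ref{prop_*comparison} relating $K_{\cal F}+\Delta$ to the moduli part. Finally, the last assertion — that any curve $C$ with $[C]\in\mathbb R_+[\xi_i]$ is contracted by $f$ — follows because $\xi_i$ is tangent to $\cal F$: by Lemma \ref{lem_tangency}(3), after the equidimensional modification it lies in a fibre of the map inducing $\cal F$, hence $f_*[\xi_i]=0$ in $N_1(Y)$, so $f_*[C]=0$ as well, and since $f$ is a projective contraction this forces $C$ to be contracted.

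The main obstacle I anticipate is the passage from the inductive statement on the divisors $S$ to a clean global cone decomposition on $X$: one must be careful that the curves $\nu(\xi)$ obtained from different divisors $E$ (and the corresponding restricted pairs) genuinely assemble into a locally discrete collection of extremal rays, and that no negative part of the cone is missed — in particular that \emph{every} $(K_{\cal F}+\Delta)$-negative extremal ray does arise this way, which is exactly guaranteed by the hypothesis of the Lemma but must be combined correctly with the accumulation estimate. A secondary technical point is checking that $(\cal F_S,\Delta_S)$ is induced by a dominant map (so that the inductive hypothesis literally applies), which is noted after Proposition \ref{prop_inv_adj}, and that $S$, being a divisor on a klt variety, is sufficiently well-behaved (normal, with $K_{\cal F_S}+\Delta_S$ $\mathbb R$-Cartier) for the induction; both follow from the adjunction results already established, but should be stated explicitly.
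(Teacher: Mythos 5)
Your proposal is correct and follows essentially the same route as the paper's (much more compressed) proof: reduce via adjunction (Proposition \ref{prop_inv_adj}) to the restricted foliated pair on the normalisation of the divisor $E$, apply the inductive Theorem \ref{thm_cone} there (with $Z_{-\infty}$ empty by log canonicity), and transfer the resulting rational curves and their degree bounds back to $X$ via Lemma \ref{lem_tangency}(4), with the final contraction claim coming from Lemma \ref{lem_tangency}(3) and the projection formula. The extra care you take over local discreteness of the rays, and your explicit remark on the projection formula for intersection numbers, merely spell out what the paper treats as standard.
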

\begin{proof}
The existence of the rational curves $\xi_1,\xi_2,\dots$ and the claim on the structure of the cone 
is a direct consequence of adjunction (cf. Proposition \ref{prop_inv_adj}) and 
Theorem \ref{thm_cone} applied to the restricted foliation on $E$.
Fix $i\ge 1$. The tangency of  $\xi_i$ follows from (4) of Lemma \ref{lem_tangency}  and (3) of Lemma \ref{lem_tangency} 
implies that $\xi_i$ is contracted by $f$. 
Our final claim follows by observing that if  $[C] \in \mathbb R_+[\xi_i]$ then for any ample divisor $A$ on $Y$ we have
$C\cdot f^*A = \xi_i\cdot f^*A = 0$ and so $C$ is contracted by $f$.
\end{proof}

\begin{lemma}
\label{lem_special_contraction}
Assume Theorem \ref{thm_cone} in dimension $\leq n-1$.

Let $(\cal F,\Delta)$ be a log canonical foliated pair on  a normal projective $\mathbb Q$-factorial klt variety 
$X$ of dimension $n$, where $\cal F$ is the foliation induced
by an equidimensional projective contraction $f\colon X \to Y$ between normal varieties and $\Delta\ge 0$. Assume that  $(\cal F, \Delta)$ satisfies Property $(*)$.
Suppose moreover that for any $(K_{\cal F}+\Delta)$-negative extremal ray, there exists
a divisor $E$ such that $R$ is contained in the image of $\overline{NE}(E) \rightarrow \overline{NE}(X)$
and such that $m_E\Delta = \epsilon(E)$. Let $R$ be a $(K_{\cal F}+\Delta)$-negative extremal ray.


Then the contraction and, if the contraction is small, the flip associated to $R$ exist.
\end{lemma}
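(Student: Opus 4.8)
The plan is to reduce the existence of the contraction and flip for the extremal ray $R$ to the corresponding statements for the MMP on varieties. By hypothesis there is a divisor $E$ on $X$ with $m_E\Delta = \epsilon(E)$ such that $R$ lies in the image of $\overline{NE}(E) \to \overline{NE}(X)$. First I would apply inversion of adjunction (Proposition \ref{prop_inv_adj}) to pass to the normalisation $\nu\colon S \to E$, obtaining a log canonical restricted foliated pair $(\cal F_S, \Delta_S)$ with $\Delta_S \ge 0$ and $\nu^*(K_{\cal F}+\Delta) = K_{\cal F_S}+\Delta_S$. Since $R$ comes from a curve on $S$ that is tangent to $\cal F_S$ (this is how the cone decomposition in Lemma \ref{lem_special_cone} was produced), the plan is to realise the contraction of $R$ by first contracting the corresponding ray ``on $S$'' and then descending to $X$.

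The key mechanism is Proposition \ref{prop_*comparison}: since $(\cal F, \Delta)$ satisfies Property $(*)$ and $f$ is equidimensional, we have $K_{\cal F}+\Delta \sim_{f,\mathbb R} K_X+\Delta+G$ where $G \ge 0$ is the $\cal F$-invariant divisor associated to $(\cal F,\Delta)$, and $(X/Y,\Delta+G)$ satisfies Property $(*)$; moreover by Lemma \ref{l_star} the pair $(X, \Delta+G)$ is log canonical. Thus a $(K_{\cal F}+\Delta)$-negative extremal ray $R$ is also $(K_X+\Delta+G)$-negative \emph{over $Y$}, and since curves in $R$ are contracted by $f$ (Lemma \ref{lem_special_cone}), $R$ is in fact a $(K_X+\Delta+G)$-negative extremal ray of $\overline{NE}(X/Y)$ for the log canonical pair $(X, \Delta+G)$. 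Because $X$ is $\mathbb Q$-factorial and klt, one can now invoke the existence of contractions and flips in the classical (log canonical) MMP relative to $Y$: the contraction $\mathrm{cont}_R\colon X \to W$ exists, and if it is small the flip $X \dashrightarrow X^+$ exists by existence of log canonical flips (equivalently, by existence of flips for the klt pair obtained by perturbing $\Delta+G$, using that $(X,\Delta+G)$ is log canonical and $X$ is klt, so a small perturbation makes it klt while keeping $R$ negative).

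Having carried out the step on $(X, \Delta+G)$, the remaining point is to check that the contraction/flip produced is also the contraction/flip for the foliated pair $(\cal F,\Delta)$, i.e. that it contracts exactly the curves with class in $R$ and no others, and that $K_{\cal F}+\Delta$ behaves correctly. This follows because $K_{\cal F}+\Delta \sim_{f,\mathbb R} K_X+\Delta+G$ over $Y$ together with the fact that both divisors are negative precisely on $R$; the flipped variety inherits Property $(*)$ and equidimensionality by Proposition \ref{prop_MMP_preserves_*} (or its foliated analogue), so the foliated structure is preserved. One also needs that the contraction is a morphism of relative Picard number drop one and that $\cal F$ descends to a foliation on $W$ induced by the composite $W \to Y$ — this is immediate since the fibres of $X \to Y$ are contracted compatibly.

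**Main obstacle.** The subtle point I expect is not the descent but making sure the ambient pair $(X,\Delta+G)$ is suitable for invoking existence of flips: log canonical flips are only known conditionally in general, so one must reduce to the klt case. The mechanism is to note that $m_E(\Delta+G)$ may equal one along the relevant components, so $(X,\Delta+G)$ is genuinely log canonical and not klt; however $X$ itself is klt, so for a sufficiently small ample $\mathbb Q$-divisor $A$ the pair $(X, \Delta+G - \epsilon \cdot(\text{something}) + \epsilon A)$ can be arranged to be klt while keeping $R$ negative and extremal — here one uses that $R$ is generated by a curve contracted by $f$ and that the perturbation can be chosen vertical-free in the relevant directions. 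Verifying that such a perturbation exists and keeps $R$ extremal, using boundedness of $-(K_{\cal F}+\Delta)\cdot \xi_i \le 2\dim X$ from Lemma \ref{lem_special_cone}, is where the real care is needed; everything else is a formal consequence of Property $(*)$ and the comparison $K_{\cal F}+\Delta \sim_{f,\mathbb R} K_X+\Delta+G$.
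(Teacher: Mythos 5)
Your proposal identifies the key mechanism---Proposition \ref{prop_*comparison} giving $K_{\cal F}+\Delta \sim_{f,\mathbb R} K_X+\Delta+G$, Lemma \ref{lem_special_cone} showing the ray is spanned by a curve contracted by $f$, and Lemma \ref{l_star} giving that $(X,\Delta+G)$ is log canonical---and then invokes the base point free theorem and \cite{BCHM06} over $Y$, which is exactly the paper's argument. The initial detour through $S$ via Proposition \ref{prop_inv_adj} is unnecessary, and the perturbation you flag as the ``main obstacle'' is routine: since $X$ is klt and $\mathbb Q$-factorial, $(X,(1-\epsilon)(\Delta+G))$ is already klt for every $0<\epsilon<1$, $R$ remains $(K_X+(1-\epsilon)(\Delta+G))$-negative for $\epsilon$ small, and extremality of $R$ in $\overline{NE}(X/Y)$ is a property of the cone alone, so no care about preserving extremality is needed.
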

\begin{proof}
By Lemma \ref{lem_special_cone}, $R$ is spanned by a rational curve $\xi$ which is contracted by $f$.
Let  $G \geq 0$ be the  divisor associated to $(\cal F,\Delta)$ (cf. Definition \ref{d_fstar}). 
By Proposition \ref{prop_*comparison} we see that $K_{\cal F}+\Delta \sim_{f, \mathbb R} K_X+\Delta+G$.
In particular, $(K_{\cal F}+\Delta)\cdot \xi = (K_X+\Delta+G)\cdot \xi$.
Lemma \ref{l_star} implies that  $(X, \Delta+G)$ is log canonical.
Thus, by the base point free Theorem and  \cite{BCHM06} the contraction and flip associated to $R$ exists.
\end{proof}

%

\begin{proof}[Proof of Lemma \ref{l_prop*}]
By Theorem \ref{t_AK} and Proposition \ref{p_toroidal}
there exists a birational morphism  $m\colon W \rightarrow X$ 
so that $m^{-1}\cal F$ is induced 
by an equidimensional contraction $g\colon W \rightarrow Z_W$ and such that  $(W/Z_W, \Delta_W+G)$
satisfies Property $(*)$ where  $G \geq 0$ is supported on $\cal F$-invariant divisors and $\Delta_W\coloneqq m_*^{-1}\Delta+\sum \epsilon(E)E$, where the sum runs over
all the $m$-exceptional divisors.  
Note that $(m^{-1}\cal F, \Delta_W)$ is not necessarily foliated log smooth, but by Lemma \ref{lem_toroidal_lc},
it is log canonical.
 
Let $\cal F_W \coloneqq m^{-1}\cal F$. We claim we may run a $(K_{\cal F_W}+\Delta_W)$-MMP over $X$.
Indeed, Lemma \ref{lem_special_cone} and Lemma \ref{lem_special_contraction} imply that 
all the required contractions and flips exist.  By Lemma \ref{lem_special_cone}, it follows that every curve contracted by this MMP is also mapped to a
point in $Z_W$ and, in particular, this MMP is also an MMP over $Z_W$.
By Proposition \ref{prop_MMP_preserves_*}, this MMP preserves Property $(*)$ and the fact that the induced contraction is equidimensional.
Since each step of this MMP is also a step in a relative $(K_W+\Delta_W+G)$-MMP over $X$, it follows that
 this MMP terminates. It is easy to check that the output of this MMP is a Property  $(*)$ modification for $(\cal F,\Delta)$.

Let $W$ be a log canonical centre of $(\cal F,\Delta)$. By passing to a higher model which extracts a log canonical place whose centre is $W$
and running an MMP as above,  we can guarantee that  there exists a component $S$ of $\rfdown \Delta'.$ such that $W= \pi(S)$.  
\end{proof}

\begin{proof}[Proof of Lemma \ref{l_cone}]
We may freely assume that Theorem \ref{thm_cone} holds in dimension $\leq n-1$.

Let $R \subset \overline{NE}(X)$ be a $(K_{\cal F}+\Delta)$-negative exposed extremal ray
and let $A$ be an appropriate choice of an ample divisor
so that $H_R\coloneqq K_{\cal F}+\Delta+A$ is a supporting hyperplane to $R$
and such that $H_R$ is positive on $\overline{NE}(X)\setminus R$.

Let $\nu$ be the numerical dimension of $H_R$. We assume first that $\nu<n$, i.e. $H_R$ is not big. We define
$D_i=H_R$ for $1\le i\le \nu+1$ and $D_i=A$ for $\nu+1<i \le n$.  Then 
\[
D_1\cdot \ldots\cdot D_n=H_R^{\nu+1}\cdot A^{n-\nu-1}=0
\]
and 
\[
-(K_{\cal F}+\Delta)\cdot D_2\cdot\ldots\cdot D_n= (A-H_R)\cdot H_R^{\nu}\cdot A^{n-\nu-1}>0.
\]
Fix a sufficiently large number $k\ge 0$ such that $M\coloneqq kH_R-(K_{\cal F}+\Delta)$ is ample. 
Thus, we may apply \cite[Corollary 2.28]{Spicer20} and
 through a general point of $X$ there is a rational curve $\xi$ which is tangent to $\cal F$, such that $H_R\cdot \xi=0$ and 
 \[
 -(K_{\cal F}+\Delta)\cdot \xi=M\cdot \xi \le 2n \frac {M\cdot H_R^\nu\cdot A^{n-\nu-1}}{-K_{\cal F}\cdot H_R^\nu\cdot A^{n-\nu-1}}=2n\frac
  {-(K_{\cal F}+\Delta)\cdot H_R^\nu\cdot A^{n-\nu-1}}{-K_{\cal F}\cdot H_R^\nu\cdot A^{n-\nu-1}}\le 2n.
 \] 

\medskip 

Suppose now that $H_R$ is big. Then there exists a prime divisor $S\subset X$ such that $R$ is $S$-negative. 
Consider the set $\Lambda$ of tuples $(W, \lambda)$ where
\begin{enumerate}
\item  $\lambda \geq 0$; 
\item $W$ is a log canonical centre of $(\cal F, \Delta+\lambda S)$ and 
\item $R \subset \text{im}(\overline{NE}(W^\nu) \rightarrow \overline{NE}(X))$
where $W^\nu$ is the normalisation of $W$ and $W^\nu \rightarrow X$ is the induced morphism.
\end{enumerate}
This set is non-empty since $S$ is $R$-negative. 
Let $(W_0, \lambda_0)$ be a minimal element in this set with respect
to inclusion on the first entry.

Let $\mu\colon \overline{X} \rightarrow X$
be a Property $(*)$ modification of $(\cal F, \Theta \coloneqq \Delta+\lambda \Delta)$
such that $\mu^{-1}(W_0)$ is a divisor, whose existence is guaranteed by Theorem \ref{thm_prop*} in dimension $n$.
Let  $\overline{\Theta} = \mu_*^{-1}\Theta+\sum \epsilon(E)E$, where the sum runs over the  $\mu$-exceptional divisors. Then 
we may write 
\[
K_{\overline{\cal F}}+\overline{\Theta}+F = \mu^*(K_{\cal F}+\Theta)
\]
for some $F \geq 0$.

Note that, for every component $F_0$ of $F$, we have $\mu(F_0) \subset \nlc {(\cal F, \Theta)}$. There exists a $(K_{\overline{\cal F}}+\overline{\Theta})$-negative
extremal ray $R'$ such that $\mu_*R' = R$.  By minimality of $W_0$ we know that $R'$ is not in the 
image of $\overline{NE}(F) \rightarrow \overline{NE}(\overline{X})$.  
So we may assume by relabelling  that $R'$ is
contained in the image of $\overline{NE}(E_0) \rightarrow \overline{NE}(\overline{X})$.

Writing $\nu^*(K_{\overline{\cal F}}+\overline{\Theta}) = K_{\cal G}+\Gamma$ where $\nu\colon E^\nu_0 \rightarrow E_0$
is the normalisation and
$\cal G$ is the restricted foliation
and arguing by induction, and using  (4) of Lemma \ref{lem_tangency},
we may produce a rational curve $\xi$ as required.
\end{proof}

\begin{proof}[Proof of Theorem \ref{thm_cone} and Theorem \ref{thm_prop*}]
By proceeding by induction on the dimension, Lemma \ref{l_prop*}   and Lemma \ref{l_cone} immediately imply  Theorem \ref{thm_cone} and Theorem \ref{thm_prop*}.
\end{proof}

\section{Proof of the Main Theorems}

\begin{proposition}\label{p_main}
Let $X$ be a quasi projective normal variety and let $(X/Z,B)$ be a GLC pair such that $B\ge 0$. Assume that 
\begin{enumerate} 
\item  $(X/Z,B)$ satisfies Property $(*)$ and it is BP stable over $Z$;
\item the induced morphism $f\colon X\to Z$ is a projective equidimensional contraction; and
\item $K_X+B$ is $f$-nef. 
\end{enumerate}

Then the moduli part $M_X$ of $(X/Z,B)$ is nef. 
\end{proposition}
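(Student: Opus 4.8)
The plan is to translate the statement into the language of algebraically integrable foliations and then apply the Cone Theorem. Write $B=\Delta+G$, where $\Delta\ge 0$ is the horizontal part of $B$ and $G\ge 0$ is its vertical part, and let $\cal F$ be the foliation induced by $f$. Since the vertical prime divisors with respect to $f$ are exactly the $\cal F$-invariant ones, $G$ is $\cal F$-invariant; as $(X/Z,\Delta+G)$ satisfies Property $(*)$ by hypothesis, the foliated pair $(\cal F,\Delta)$ satisfies Property $(*)$ in the sense of Definition \ref{d_fstar}, with associated divisor $G$. By Lemma \ref{l_star} the discriminant $B_Z$ equals the reduced divisor $\Sigma_Z$ and $(Z,\Sigma_Z)$ is log smooth, so $K_Z+B_Z$ is $\mathbb R$-Cartier and $M_X=K_X+B-f^*(K_Z+B_Z)$ is a well-defined $\mathbb R$-Cartier divisor, up to linear, hence numerical, equivalence. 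Since $f$ is equidimensional, Proposition \ref{prop_*comparison} gives $K_{\cal F}+\Delta\sim M_X$. Finally, $(X/Z,B)$ is BP stable over $Z$, hence BP semi-stable, so Proposition \ref{p_canonical} shows that $(\cal F,\Delta)$ is log canonical; in particular $\nlc{(\cal F,\Delta)}=\emptyset$.

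Now I would apply the Cone Theorem (Theorem \ref{thm_cone}) to the foliated pair $(\cal F,\Delta)$ on $X$. As $\nlc{(\cal F,\Delta)}=\emptyset$, the term $Z_{-\infty}$ is trivial and we obtain
\[
\overline{NE}(X)=\overline{NE}(X)_{K_{\cal F}+\Delta\ge 0}+\sum_i\mathbb R_+[\xi_i],
\]
where each $\xi_i$ is a rational curve tangent to $\cal F$. Suppose, for a contradiction, that $M_X$ is not nef. Because $K_{\cal F}+\Delta\sim M_X$, the half-space $\overline{NE}(X)_{K_{\cal F}+\Delta\ge 0}$ coincides with $\overline{NE}(X)_{M_X\ge 0}$, so a class negative against $M_X$ must have nonzero component along some ray $\mathbb R_+[\xi_i]$, forcing $M_X\cdot\xi_i<0$ for that index $i$. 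On the other hand, $\cal F$ is induced by the morphism $f$, so by (3) of Lemma \ref{lem_tangency} the curve $\xi_i$, being tangent to $\cal F$, is contained in a fibre of $f$; hence $f^*(K_Z+B_Z)\cdot\xi_i=0$ and
\[
M_X\cdot\xi_i=\bigl(K_X+B-f^*(K_Z+B_Z)\bigr)\cdot\xi_i=(K_X+B)\cdot\xi_i\ge 0,
\]
since $K_X+B$ is $f$-nef. This contradiction shows that $M_X$ is nef.

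The computation itself is short: the substance is entirely in verifying the hypotheses of the auxiliary results, and above all of the Cone Theorem. The only step that genuinely uses BP stability (rather than merely Property $(*)$) is the appeal to Proposition \ref{p_canonical} to guarantee that $(\cal F,\Delta)$ is log canonical, i.e. that $Z_{-\infty}=\emptyset$; this is the crux, since otherwise the $Z_{-\infty}$ contribution could produce $M_X$-negative classes that we cannot control via $f$-nefness. A minor point requiring care is that $X$ is only assumed quasi-projective whereas Theorem \ref{thm_cone} is phrased for projective $X$: here one argues directly with complete curves in $X$, writing any such curve class as a sum of an element of $\overline{NE}(X)_{M_X\ge 0}$ and nonnegative multiples of the vertical classes $[\xi_i]$, each of which pairs nonnegatively with $M_X$ by the $f$-nefness computation above.
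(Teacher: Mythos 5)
Your treatment of the projective case is essentially identical to the paper's: you pass to the foliated pair $(\cal F,\Delta)$, use Property $(*)$ and equidimensionality to identify $M_X$ with $K_{\cal F}+\Delta$ via Proposition~\ref{prop_*comparison}, invoke Proposition~\ref{p_canonical} to get log canonicity of $(\cal F,\Delta)$ so that $Z_{-\infty}=\emptyset$, and then observe that any $M_X$-negative extremal ray would be spanned by a curve tangent to $\cal F$, hence vertical (Lemma~\ref{lem_tangency}), hence with $M_X\cdot\xi_i = (K_X+B)\cdot\xi_i \ge 0$ by $f$-nefness, a contradiction. This is exactly the paper's argument in the first half of its proof.

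The quasi-projective case, however, is where your proof has a genuine gap. You dismiss it as ``a minor point requiring care,'' claiming that one can ``argue directly with complete curves in $X$, writing any such curve class as a sum of an element of $\overline{NE}(X)_{M_X\ge 0}$ and nonnegative multiples of the vertical classes $[\xi_i]$.'' But this decomposition of $\overline{NE}(X)$ is precisely the conclusion of the Cone Theorem, which is only proved (and only makes sense in the form stated) for projective $X$; you cannot simply assert it when $X$ is merely quasi-projective. The paper devotes roughly half its proof to this case, and for a reason: one must compactify $X$ and $Z$, the compactified pair may fail Property $(*)$ and $f$-nefness near the boundary, and the compactified $f$ may no longer be equidimensional. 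The paper handles this by fixing a proper curve $\xi\subset Z$, perturbing by a general $0\le H\sim tA$ via the Bertini-type Proposition~\ref{p_prop*_bertini} so that Property $(*)$ holds over a neighbourhood of the generic point of $\xi$, taking a Property $(*)$ modification of the compactified foliation, running a $(K_Y+C)$-MMP over $Z'$ (which terminates because $C$ is big over the base), reducing to the projective case on the output, and finally letting $t\to 0$. None of this is implicit in your argument, so the quasi-projective case remains unproven as written.
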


\begin{proof} We first prove the Proposition under the assumption that both $X$ and $Z$ are projective. 
Let $\cal F$ be the foliation on $X$ defined by $f$, let $\Delta$ be the horizontal part of $B$ with respect to $f$ and let $G$ be its vertical part. Then $(\cal F,\Delta)$ satisfies Property $(*)$ and $G$ is a divisor associated to $(\cal F,\Delta)$ (cf. Definition \ref{d_fstar}). Since, by assumption, $f$ is equidimensional,  Proposition \ref{prop_*comparison} implies that the moduli part $M_X$ of  $(X/Z,B)$ is linearly equivalent to $K_{\cal F}+\Delta$ and Proposition \ref{p_canonical} implies that $(\cal F,\Delta)$ is log canonical. 

Assume, by contradiction, that $M_X$ is not nef. Then the Cone theorem (cf. Theorem \ref{thm_cone}) implies that there exists a curve $\xi$ in $X$ which is tangent to $\cal F$ and such that 
\[
M_X\cdot \xi = (K_{\cal F}+\Delta)\cdot \xi<0.
\]
By (3) of Lemma  \ref{lem_tangency}, the curve $\xi$ is vertical with respect to $f$, i.e. $M_X$ is not $f$-nef, which contradicts our assumption. 

\medskip

We now prove the Proposition in general, assuming that $X$ and $Z$ are quasi-projective.
Let $\xi \subset Z$ be a  projective curve. It is enough to show that $M_X\cdot \xi'\ge 0$ for any curve $\xi'$ such that $f(\xi')=\xi$. Let $\Sigma_Z\subset Z$ be the induced divisor. 
Let $\overline{Z}$ be a  projective compactification of $Z$ so that $\Sigma\coloneqq \overline{Z}\setminus Z$ is a divisor. After possibly replacing $\overline Z$ by a log resolution, we may assume that there exists a reduced divisor $\Sigma_{\overline Z}\ge \Sigma$ such that $(\overline Z,\Sigma_{\overline {Z}})$ is log smooth and $\Sigma_{\overline Z}|_Z=\Sigma_Z$. 
Let $\overline{X}$ be a normal projective compactification of $X$ so that we have a morphism $\bar{f}\colon \overline{X} \to \overline{Z}$.
Let $\overline{B}$ denote the closure of $B$, let $\overline{\Delta}$ denote the horizontal part of $\overline{B}$
and let $\overline{\cal F}$ denote the foliation induced by $\bar{f}$.

Let $A$ be an ample divisor on $\overline X$ and let $t>0$ be a rational number. By Proposition \ref{p_prop*_bertini}, there exists an open neighbourhood $V\subset Z$ of the generic point of $\xi$ and  a general $\mathbb Q$-divisor $0\le H\sim tA$ on $\overline{X}$ such that
$(\overline{X}/\overline{Z}, \overline{B}+H)$ satisfies Property $(*)$ over $V$. 
By Theorem \ref{thm_prop*}, there exists a Property $(*)$ modification $\pi\colon Y \rightarrow \overline{X}$ for $(\overline {\cal F},\overline{\Delta})$ so that $\cal G \coloneqq \pi^{-1}\cal F$
is induced by an equidimensional contraction $g\colon Y \rightarrow Z$ and where $\mu\colon Z' \rightarrow \overline{Z}$ is a birational morphism.
Let $C \coloneqq \pi_*^{-1}(\overline{B}+H)+\sum E$ and $\Gamma \coloneqq \pi_*^{-1}(\overline{\Delta}+H)+\sum \epsilon(E)E$, where the sums run over all  the $\pi$-exceptional divisors.
Then
\[(K_{\cal G}+\Gamma)+F = \pi^*(K_{\overline{\cal F}}+\overline{\Delta})\] 
where $F \geq 0$ is supported on 
the complement of $U \coloneqq \pi^{-1}\bar{f}^{-1}(V)$ and, in particular, $F\cdot \xi' \geq 0$ for any curve $\xi''$ on $Y$ such that $\bar{f}\circ \pi(\xi'')=\xi$.
Let $W = \mu^{-1}(V)$. Proposition \ref{prop_*comparison} implies that  the moduli part of $(U/W, C)$ is linearly equivalent to $(K_{\cal G}+\Gamma)\vert_U$,
and since $\pi^*M_X \sim  (K_{\cal G}+\Gamma)\vert_U$, it follows  that $(K_Y+C)\vert_U$ is nef over $W$.

Let $\phi\colon Y \dashrightarrow Y'$ be a 
$(K_Y+C)$-MMP over $Z'$. Note that such a MMP terminates, since $C$ is $g$-big. 
Since $(K_Y+C)|_U$ is nef over $W$, we have that $\phi|_U$ is an isomorphism.
 Proposition \ref{prop_MMP_preserves_*} implies that $(Y'/Z',\phi_*C)$ satisfies Property $(*)$ and that $g$ is equidimensional.   Proposition \ref{prop_*comparison} implies that  the moduli part of $(Y'/Z', \phi_*C)$ is linearly equivalent to $K_{\cal G'} + \phi_*\Gamma$ is nef,
where $\cal G'$ is the induced foliation on $Y'$.
Since $K_{Y'}+\phi_*C$ is nef over $W$, it follows, by the projective case above, that $K_{\cal G'} + \phi_*\Gamma$ is nef. Thus,  $(K_{\cal G}+\Gamma)\vert_U$ is nef, and so 
$(K_{\cal G}+\Gamma+F)\cdot \xi''\geq 0$ for any curve $\xi''$ on $Y$ such that $\bar{f}\circ \pi(\xi'')=\xi$. It follows that $(M_X+tA)\cdot \xi' \geq 0$ for any curve $\xi'$ on $\overline {X}$ such that $\bar{f}(\xi')=\xi$ and for all $t>0$.  Thus, $M_X\cdot \xi'\geq 0$ and the result follows. \end{proof}

\begin{lemma}\label{l_*BP}
Let $X$ be a quasi projective normal variety and let $(X/Z,B)$ be a GLC pair such that $B\ge 0$ and such that the induced morphism $f\colon X\to Z$ is a projective contraction. Assume that 
\begin{enumerate} 
\item $X$ is $\mathbb Q$-factorial and $(X,B)$ is dlt; 
\item  $(X/Z,B)$ satisfies Property $(*)$; and
\item $K_X+B$ is $f$-nef.
\end{enumerate}
Let $f'\colon X'\to Z'$ be a contraction between normal varieties which is birational equivalent to $f$ and such that the induced maps $\alpha\colon Z'\to Z$ and $\beta\colon X'\to X$ are birational morphisms. 
Write $K_{X'}+B'=\beta^*(K_X+B)$ and $\alpha^*(K_Z+B_Z)=K_Z+B'_Z$. Let $B'_{Z'}$ be the discriminant  of $(X'/Z',B')$ on $Z'$.

 Then $B'_Z\le B'_{Z'}$. 
\end{lemma}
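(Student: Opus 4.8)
The plan is to translate the desired inequality into a comparison of moduli parts on a common model and then apply the negativity lemma.

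\textbf{Reduction to a prime divisor, and the divisorial case.} It suffices to show $m_P B'_Z\le m_P B'_{Z'}$ for every prime divisor $P$ on $Z'$. By Lemma~\ref{l_star} the discriminant $B_Z$ equals the reduced divisor $\Sigma_Z$, so $m_P B'_Z=-a(P,Z,\Sigma_Z)$, while by definition $m_P B'_{Z'}=1-\gamma_P$, where $\gamma_P$ is the log canonical threshold of $f'^*P$ over the generic point of $P$ for $(X',B')$. First I would dispose of the case where $W=\alpha(P)$ is a divisor on $Z$: then $\alpha$ is an isomorphism near the generic point of $W$, so $(X',B'+tf'^*P)$ is crepant to $(X,B+tf^*W)$ there, and using Property~$(*)$ --- together with the fact, extracted in the proof of Proposition~\ref{prop_*comparison}, that $f$ is unramified over prime divisors outside $\Sigma_Z$, so that $f^*W$ is reduced when $W\not\subseteq\Sigma_Z$ --- one checks that $\gamma_P=1$ and $a(P,Z,\Sigma_Z)=0$ if $W\not\subseteq\Sigma_Z$, and $\gamma_P=0$ and $a(P,Z,\Sigma_Z)=-1$ if $W\subseteq\Sigma_Z$. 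Thus equality $m_P B'_Z=m_P B'_{Z'}$ holds whenever $P$ is non-exceptional over $Z$, so the divisor $B'_{Z'}-B'_Z$ is supported on the $\alpha$-exceptional primes.

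\textbf{Reduction to the equidimensional case.} Since $(X,B)$ is dlt and $\mathbb Q$-factorial and $K_X+B$ is $f$-nef, I would apply Proposition~\ref{prop_*resolution} to pass to an equidimensional Property~$(*)$ model and then run the resulting relative MMP over $X$; this is a crepant process contracting only exceptional divisors, and since (by Proposition~\ref{prop_MMP_preserves_*}) it is also an MMP over the base, it preserves Property~$(*)$ and equidimensionality. Arranging that the exceptional locus of the induced morphism $Z_1\to Z$ lies in the boundary, one gets $\Sigma_{Z_1}\ge$ the crepant pullback of $\Sigma_Z$, so that this replacement only increases the quantity on the right side of the inequality we must beat (equivalently, only decreases $B'_Z$), and the discriminants $B'_{Z'}$ are unchanged by Remark~\ref{r_induced}; hence it is enough to prove the statement after this replacement. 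Finally, since it suffices to prove $B'_Z\le B'_{Z'}$ for $Z'$ ranging over a cofinal family (the discriminant being compatible with pushforward, and pushforward of divisors preserving effectivity), I may assume in addition that $f'\colon X'\to Z'$ is equidimensional and $Z'$ is $\mathbb Q$-factorial, with $\alpha$ and $\beta$ still morphisms.

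\textbf{The moduli-part identity and the negativity lemma.} With $f$ and $f'$ equidimensional, set $M_X=K_X+B-f^*(K_Z+\Sigma_Z)$ and $M_{X'}=K_{X'}+B'-f'^*(K_{Z'}+B'_{Z'})$. Using $f\circ\beta=\alpha\circ f'$, $\beta^*(K_X+B)=K_{X'}+B'$ and $\alpha^*(K_Z+\Sigma_Z)=K_{Z'}+B'_Z$, one obtains
\[\beta^*M_X-M_{X'}=f'^*(B'_{Z'}-B'_Z),\]
which by the first step is a $\beta$-exceptional divisor. On the other hand, for any curve $C$ contracted by $\beta$ one has $\beta_*C=0$ and $C$ is also contracted by $f'$, whence $M_{X'}\cdot C=(K_{X'}+B')\cdot C=\beta^*(K_X+B)\cdot C=0$; so $M_{X'}$, and therefore $-(\beta^*M_X-M_{X'})$, is $\beta$-nef. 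The negativity lemma (cf. \cite[Lemma 3.38]{KM98}) then forces $\beta^*M_X-M_{X'}\ge 0$, i.e. $f'^*(B'_{Z'}-B'_Z)\ge 0$, and since $f'$ is surjective this gives $B'_Z\le B'_{Z'}$.

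I expect the main obstacle to be the reduction in the second paragraph: one has to ensure that the relative MMP over $X$ exists and terminates --- this is precisely where the dlt, $\mathbb Q$-factoriality and $f$-nefness hypotheses are used --- that it remains an MMP over the base so that Property~$(*)$ and equidimensionality are preserved, and that the base modification can be chosen compatibly with the boundary so that replacing $(X/Z,B)$ by the new pair does not weaken the inequality. Once the equidimensional reduction is in place, the moduli-part identity together with the numerical triviality of $M_{X'}$ along $\beta$ makes the negativity lemma apply directly, and the divisorial case handled at the outset is what guarantees the relevant difference is $\beta$-exceptional.
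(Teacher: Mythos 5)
The heart of your argument --- the claim that $M_{X'}$ is $\beta$-nef because every curve contracted by $\beta$ is also contracted by $f'$ --- is false. Consider $X = Z \times \mathbb{P}^1$ with $f$ the first projection, $\alpha\colon Z' \to Z$ the blow-up of a point with exceptional curve $E$, $X' = Z' \times \mathbb{P}^1$, $\beta = \alpha \times \mathrm{id}$, and $f'$ the first projection. The curve $E \times \{t\}$ is contracted by $\beta$ to a point of $X$, yet $f'(E \times \{t\}) = E$ is a curve in $Z'$. In general a $\beta$-contracted curve only has its $f'$-image contained in an $\alpha$-fibre, not in a point, so the step $M_{X'} \cdot C = (K_{X'}+B') \cdot C$ breaks down: the term $f'^*(K_{Z'}+B'_{Z'}) \cdot C = (K_{Z'}+B'_{Z'}) \cdot f'_*C$ need not vanish when $f'_*C$ is a nonzero $\alpha$-exceptional class. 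Without that, you cannot conclude $\beta$-nefness of $M_{X'}$ and the negativity lemma does not apply. This is not a technical slip: since $M_{X'} \equiv_\beta -f'^*(B'_{Z'}-B'_Z)$, its $\beta$-nefness is essentially the conclusion you are after read along $\beta$-contracted curves, so assuming it is circular.

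This is precisely the difficulty the paper's proof is built to overcome, and it is where the MMP and the foliated Cone Theorem enter. Rather than compare $M_X$ directly to the crepant pullback $M_{X'}$, the paper first reduces (by cutting with hyperplanes and a limiting argument via Proposition~\ref{p_prop*_bertini}) to the case $B \geq H$ with $H$ $f$-ample, passes to an equidimensional Property~$(*)$ model with boundary $\overline{B}$, and runs a $(K+\overline{B})$-MMP over $Z'$, which terminates because $\overline{B}$ is relatively big (cf.~\cite{BCHM06}). The output $(Y/Z',C)$ still satisfies Property~$(*)$ with $g\colon Y\to Z'$ equidimensional, and --- crucially --- its moduli part $M_Y$ is \emph{genuinely nef} by Proposition~\ref{p_main}, which rests on Theorem~\ref{thm_cone}. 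It is $M_Y$, not the crepant $M_{X'}$, that feeds the negativity lemma: one checks by a direct coefficient computation that $p_*(p^*M_X - q^*M_Y)\geq 0$ and then concludes $p^*M_X - q^*M_Y\geq 0$, from which the discriminant inequality is extracted. Your negativity-lemma idea and the identity $\beta^*M_X - M_{X'} = f'^*(B'_{Z'}-B'_Z)$ are both on the right track, but the divisor you compare $\beta^*M_X$ against must be the moduli part of an MMP output where nefness has been forced, not the raw crepant pullback.
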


\begin{proof}
Let $F$ be a divisor on $Z'$ which is exceptional over $Z$. It is enough to show that 
$m_FB'_Z\le m_FB'_{Z'}$.
 By cutting through general hyperplanes, we may assume  that the centre of $F$ in $Z$ is a closed point $z\in Z$.

We first prove the Lemma under the assumption that $B\ge H$ for a fixed general $f$-ample $\mathbb Q$-divisor $H\ge 0$ on $X$.
By  Proposition \ref{prop_*resolution}, after possibly replacing $Z'$ by a higher model, we may assume that 
 $f'\colon X'\to Z'$ is equidimensional, $X'$ is $\mathbb Q$-factorial and there exists an effective
 $\mathbb R$-divisor $\overline B$ on $X'$ such that
  $(X'/Z',\overline B)$ satisfies Property $(*)$,  $\overline B\ge B'$ 
and the horizontal part of $\overline B-B'$ is $\beta$-exceptional. Note that, by the assumption above, it follows that $\overline B$ is $f'$-big. 
Let $\Sigma_{Z'}$ be the induced divisor on $Z'$. By Lemma \ref{l_star}, $\Sigma_{Z'}$ coincides with the 
 discriminant  $\overline B_{Z'}$
 of $(X'/Z',\overline B)$.
After possibly replacing $Z'$ by a higher model and $\overline B$ by a larger $\mathbb R$-divisor, 
 we may assume that $F$ is a divisor on $Z'$ which is a component of $\Sigma_{Z'}$. 
 Furthermore, since $(X,B)$ is dlt and $B\ge H$, we may assume that there exists a $\mathbb Q$-divisor $0\le D\sim_{\mathbb Q}\overline B$ such that $D\ge H'$ for some ample $\mathbb Q$-divisor $H'$ on $X'$ and $(X,D)$ is klt. Thus, by \cite[Theorem 1.2]{BCHM06}, Theorem \ref{t_main1}, we may run a $(K_{X'}+\overline B)$-MMP over $Z'$.  Let $\phi\colon X'\dashrightarrow Y$ be the induced map, let $C=\phi_*\overline B$ and let $g\colon Y\to Z'$ be the induced morphism. Then Proposition \ref{prop_MMP_preserves_*} implies that $(Y/Z',C)$ satisfies Property $(*)$ and that $g$ is equidimensional.  
Thus, if  $M_Y$ is the moduli part of $(Y/Z',C)$, then as in the proof of Proposition \ref{p_main}, it follows that $M_Y$ is nef. 

Let $p\colon W\to X$ and $q\colon W\to Y$ be birational morphisms which resolve the indeterminacy of the induced map $X\dashrightarrow Y$ and let  
\[
A\coloneqq p^*M_X-q^*M_Y.
\]
By construction and since $K_X+B$ is $f$-nef, it follows that $A$ is vertical with respect to the induced morphism $W\to Z$. 

We claim that $A\ge 0$. Assuming the claim, we first prove that  $m_FB'_{Z} \le m_FB'_{Z'}$. We may write 
\[
K_W+B_W=p^*(K_X+B)\qquad\text{and}\qquad K_W+C_W=q^*(K_Y+C).
\]
Thus, if $h\colon W\to Z'$ is the induced morphism and $C_{Z'}$ is the discriminant of $(Y/Z',C)$, it follows that
\[
A=B_W-C_W+h^*(C_{Z'}-B'_{Z})
\]
and, in particular, for any prime divisor $R$ on $W$ mapping onto $F$, since $m_RC_W=m_FC_{Z'}=1$, we have that 
\[
m_R(B_W + (1-m_FB'_Z)h^*F)=m_R A +m_R C_W\ge 1. 
\]
It follows that $1-m_FB'_Z\ge \gamma_F$, where
\[
\gamma_F=\sup\{t\in \mathbb R\mid (W,B_W+th^*F) \text{ is log canonical over the generic point of }F\}.
\]
Note that $B'_{Z'}$ coincides with the discriminant of $(W/Z,B_W)$ and, in particular, $\gamma_F=1-m_FB'_{Z'}$. 
Thus, $m_FB'_{Z} \le m_FB'_{Z'}$, as claimed. 

\medskip

We now prove the claim. Since $M_Y$ is nef, by the negativity lemma it is enough to prove that $p_*A\ge 0$. 
Let $E\subset X$ be a prime divisor. We want to show that if $E'$ is the strict transform of $E$ in $W$ then $m_{E'}A\ge 0$. 
By Lemma \ref{l_star}, the discriminant $C_{Z'}$ of $(Y/Z',C)$ coincides with the induced divisor $\Sigma_{Z'}$ and, in particular, it is reduced. 
Since $(Z,B_Z)$ is log canonical, it follows that $C_{Z'}\ge B'_{Z}$. Thus, since $A=B_W-C_W+h^*(C_{Z'}-B'_{Z})$, it is enough to show that $m_{E'}B_W\ge m_{E'}C_W$. 
By construction, we may assume that $p(E)\subset \Sigma_X$. Since $(X/Z,B)$ satisfies Property $(*)$, it follows that $m_EB=1$. 
Since $(Y,C)$ is log canonical, it follows that 
\[
m_{E'}B_W=1\ge m_{E'}C_W.
\]
Thus, our claim follows. 

\medskip 

We now prove the Lemma in general. Let $H$ be an ample divisor on $X$. After possibly replacing $Z$ by a smaller open neighbourhood of $z$,  Proposition \ref{p_prop*_bertini} implies that, if 
$H$ sufficiently general then $(X/Z,B+tH)$ satisfies Property $(*)$ for any sufficiently small rational number $t>0$. Let $D=B+tH$. Note that, since $H$ is general,  the discriminant of $(X/Z,D)$ coincides with the discriminant of $(X/Z,B)$ and the argument above implies that if $K_{X'}+D'=\beta^*(K_X+D)$ and $D'_{Z'}$ is the discriminant of $(X'/Z',D')$ then $m_FD'_{Z'}\ge m_F B'_{Z}$. 
Note that $D'=B'+tf^*H$ and, therefore, we have that $\lim_{t\to 0}m_FD'_{Z'}=m_FB'_{Z'}$. Thus, $m_FB'_{Z'}\ge m_FB'_Z$ and 
the Lemma follows.
\end{proof}

\begin{theorem}
\label{t_*BP}
Let $X$ be a quasi projective normal variety and let $(X/Z,B)$ be a GLC pair such that $B\ge 0$ and such that the induced morphism $f\colon X\to Z$ is an equidimensional projective contraction. Let $\cal F$ be the foliation induced by $f$ and let $\Delta$ be the horizontal part of $B$. 
Suppose  that 
\begin{enumerate}
\item $(X/Z,B)$ satisfies Property $(*)$; 
\item $(\cal F,\Delta)$ is log canonical; and
\item $K_X+B$ is $f$-nef.
\end{enumerate}

Then  $(X/Z,B)$ is BP stable over $Z$. 
\end{theorem}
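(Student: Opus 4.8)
The plan is to prove the two inequalities $B'_Z\ge B'_{Z'}$ and $B'_Z\le B'_{Z'}$ between the relevant discriminants on every higher model $f'\colon X'\to Z'$ of $f$ (where $B'_{Z'}$ is the discriminant of the induced pair and $K_{Z'}+B'_Z=\alpha^*(K_Z+B_Z)$), which together are precisely the content of BP stability. First I would dispose of the easy half. By Lemma \ref{l_star}, $(X,B)$ is log canonical and the discriminant $B_Z$ equals the reduced divisor $\Sigma_Z$; in particular $K_Z+B_Z$ is $\mathbb R$-Cartier. Letting $G$ be the vertical part of $B$, which by Property $(*)$ equals $f^{-1}(\Sigma_Z)$ and is therefore $\cal F$-invariant, the foliated pair $(\cal F,\Delta)$ satisfies Property $(*)$ with associated divisor $G$ (Definition \ref{d_fstar}); since it is also log canonical by hypothesis, Proposition \ref{p_canonical} shows that $(X/Z,B)$ is BP semi-stable over $Z$, i.e. $B'_Z\ge B'_{Z'}$ on all higher models. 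It remains to establish the reverse inequality, and for this the idea is to reduce to Lemma \ref{l_*BP}.

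The technical heart of the argument is the claim that every log canonical centre of $(X,B)$ which is vertical over $Z$ has image contained in $\Sigma_Z$. To prove it, suppose $V$ is such a centre with $W\coloneqq f(V)\not\subseteq\Sigma_Z$, and let $z$ be a general point of $W$, so $z\notin\Sigma_Z$; since $(Z,\Sigma_Z)$ is log smooth, $Z$ is smooth, and we may choose a prime divisor $H$ on $Z$ with $W\subseteq H$ and $H$ smooth at $z$ (take $H=W$ if $W$ is a divisor). Then $(Z,\Sigma_Z+H)$ is log smooth around $z$, so Property $(*)$ forces $(X,B+f^*H)$ to be log canonical around $f^{-1}(z)$. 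On the other hand, if $E$ is a log canonical place of $V$ then its centre in $X$ is $V\subseteq f^{-1}(W)\subseteq\Supp f^*H$, so $a(E,X,B+f^*H)\le a(E,X,B)-1=-2$, a contradiction.

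Granting the claim, I would take a $\mathbb Q$-factorial dlt modification $g\colon Y\to X$ of $(X,B)$, so that $Y$ is $\mathbb Q$-factorial, $(Y,B_Y)$ is dlt with $B_Y=g_*^{-1}B+\Exc g\ge 0$ and $K_Y+B_Y=g^*(K_X+B)$, and every $g$-exceptional divisor is a log canonical place of $(X,B)$. Writing $f_Y\coloneqq f\circ g\colon Y\to Z$, a projective contraction, I would verify that $(Y/Z,B_Y)$ satisfies the hypotheses of Lemma \ref{l_*BP}: it is $\mathbb Q$-factorial dlt by construction; $K_Y+B_Y=g^*(K_X+B)$ is $f_Y$-nef because $K_X+B$ is $f$-nef and $g$ is birational over $Z$; and it satisfies Property $(*)$, since a $g$-exceptional divisor that is vertical over $Z$ has centre a vertical log canonical centre of $(X,B)$, hence lies over $\Sigma_Z$ by the claim — so the vertical part of $B_Y$ is exactly $f_Y^{-1}(\Sigma_Z)$ — and since for $\Sigma\ge\Sigma_Z$ log smooth the pair $(Y,B_Y+f_Y^*(\Sigma-\Sigma_Z))$ is the crepant pull-back of $(X,B+f^*(\Sigma-\Sigma_Z))$ and therefore inherits its log canonicity around $f_Y^{-1}(z)$. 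Note that $f_Y$ need not be equidimensional, but Lemma \ref{l_*BP} does not require this.

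Finally I would compare discriminants. Given any $f'\colon X'\to Z'$ birationally equivalent to $f$ with $\alpha\colon Z'\to Z$ and $\beta\colon X'\to X$ projective birational, I would pick a common resolution $X''$ dominating both $X'$ and $Y$ over $Z$. By Remark \ref{r_induced} the discriminant of the pair induced on $X''$ over $Z'$ is again $B'_{Z'}$, and the discriminant of $(Y/Z,B_Y)$ coincides with $B_Z$; applying Lemma \ref{l_*BP} to $(Y/Z,B_Y)$ and the model $X''\to Z'$ therefore gives $B'_Z\le B'_{Z'}$. Combining this with BP semi-stability yields $B'_Z=B'_{Z'}$ on every higher model, i.e. $(X/Z,B)$ is BP stable over $Z$. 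The points that require genuine care — and that I expect to be the main obstacle — are the claim on vertical log canonical centres together with the resulting verification that the dlt modification preserves Property $(*)$; the existence of a $\mathbb Q$-factorial dlt modification for the log canonical $\mathbb R$-pair $(X,B)$ should also be pinned down to an explicit reference.
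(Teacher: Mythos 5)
Your proof is correct and follows essentially the same route as the paper: BP semi-stability via Proposition \ref{p_canonical}, then the reverse inequality by passing to a $\mathbb Q$-factorial dlt modification of $(X,B)$ and invoking Lemma \ref{l_*BP} (Remark \ref{r_induced} handles the passage to a common higher model). Your auxiliary claim --- that every vertical log canonical centre of $(X,B)$ has image in $\Sigma_Z$ --- is a genuine value-add: the paper simply asserts that the dlt modification satisfies Property $(*)$ without spelling out why no extracted vertical divisor can live over the complement of $\Sigma_Z$, and your discrepancy computation with $f^*H$ supplies exactly that missing justification.
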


\begin{proof}
Let $\Sigma_Z$ be the divisor induced on $Z$. 
By Lemma \ref{l_star}, $\Sigma_Z$ coincides with the discriminant $B_Z$  of $(X/Z,B)$. 
Let $F$ be a divisorial valuation over $Z$ and let $f'\colon X'\to Z'$ be a contraction between normal varieties which is birational equivalent to $f$ such that the induced maps $\alpha\colon Z'\to Z$ and $\beta\colon X'\to X$ are birational morphisms and such that $F$ is a divisor on $Z'$. 
Write $K_{X'}+B'=\beta^*(K_X+B)$ and let $B'_{Z'}$ be the discriminant  of $(X'/Z',B')$ on $Z'$.
Since $(\cal F,\Delta)$ is log canonical, Proposition \ref{p_canonical} implies that $(X/Z,B)$ is BP semi-stable over $Z$. Thus,
 it is enough to show that if $\alpha^*(K_Z+B_Z)=K_Z+B'_Z$ then $m_FB'_Z\le m_FB'_{Z'}$.  

Let $\mu\colon \overline X\to X$ be a $\mathbb Q$-factorial dlt modification of $(X,B)$ (e.g. see \cite[Theorem 1.34]{Kollar13}) and let $\overline f\colon \overline X\to Z$ be the induced morphism. 
We may write 
\[
K_{\overline X}+\overline B=\mu^*(K_X+B).
\]
Then $(\overline X/Z,\overline B)$ satisfies Property $(*)$, $K_{\overline X}+\overline  B$ if $\overline f$-nef and, by (1) of Remark \ref{r_induced}, it follows that the discriminant of $(\overline X/Z,\overline B)$ coincides with the discriminant of $(X/Z,B)$ (note that $\overline f$ is not necessarily equidimensional). Thus, Lemma \ref{l_*BP} implies our claim. 
\end{proof}

\begin{theorem} 
\label{thm_main}
Assume termination of log canonical flips in dimension $n$. 

Let $X$ be a quasi projective normal variety of dimension $n$ and let 
$(X/Z,B)$ be a GLC pair  such that    $B\ge 0$ and such that the induced morphism $f\colon X\to Z$ is a projective contraction.  Assume that 
\begin{enumerate} 
\item  $(X/Z,B)$ satisfies Property $(*)$ and it is BP stable over $Z$; and 
\item $K_X+B$ is $f$-nef. 
\end{enumerate}

Then the moduli part $M_X$ of $(X/Z,B)$ is nef. 
\end{theorem}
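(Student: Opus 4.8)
The plan is to deduce this from the equidimensional case already established in Proposition \ref{p_main}, by passing to an equidimensional Property $(*)$ model via Proposition \ref{prop_*resolution} and a Minimal Model Program, and then transporting the nefness of the moduli part back down.

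First I would apply Proposition \ref{prop_*resolution} to obtain a commutative diagram with an equidimensional contraction $f'\colon X'\to Z'$, $X'$ $\mathbb Q$-factorial, birational projective morphisms $\alpha\colon Z'\to Z$ and $\beta\colon X'\to X$, and an $\mathbb R$-divisor $B'\ge 0$ with $(X'/Z',B')$ satisfying Property $(*)$ and $K_{X'}+B'=\beta^*(K_X+B)+F$, where $F\ge 0$ is vertical over $f'$; recall also that $(X,B)$ and $(X',B')$ are crepant over the generic point of $Z$. Since $\beta^*(K_X+B)$ is nef over $Z'$ and $F\ge 0$, the divisor $K_{X'}+B'$ is pseudo-effective over $Z'$. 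Writing $\cal F'$ for the foliation induced by $f'$ and $\Delta'$ for the horizontal part of $B'$, Lemma \ref{lem_toroidal_lc} shows $(\cal F',\Delta')$ is log canonical. I would then run a $(K_{X'}+B')$-MMP over $Z'$; since $(X',B')$ is log canonical of dimension $n$ and $K_{X'}+B'$ is pseudo-effective over $Z'$, this terminates by the assumed termination of log canonical flips, yielding $g\colon Y\to Z'$ with $B_Y=\phi_*B'\ge 0$ and $K_Y+B_Y$ $g$-nef. By Proposition \ref{prop_MMP_preserves_*}, $(Y/Z',B_Y)$ satisfies Property $(*)$ and $g$ is equidimensional; and since $K_{\cal F'}+\Delta'\sim_{f',\mathbb R}K_{X'}+B'$ by Proposition \ref{prop_*comparison}, this MMP is simultaneously a $(K_{\cal F'}+\Delta')$-MMP, which preserves log canonicity of the foliated pair, so $(\cal F_Y,\Delta_Y)$ is log canonical. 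Then Theorem \ref{t_*BP} gives that $(Y/Z',B_Y)$ is BP stable over $Z'$, and Proposition \ref{p_main} applies to show $M_Y$ is nef.

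The descent is next. The MMP above is an isomorphism over the generic point of $Z'$ (equivalently of $Z$), because there $F$ vanishes and $\beta^*(K_X+B)$ is already nef, so $(X,B)$ and $(Y,B_Y)$ are crepant over the generic point of $Z$. Hence Proposition \ref{p_maximal} applies, with $(X/Z,B)$ satisfying Property $(*)$ and $B\ge 0$ and with $(Y/Z',B_Y)$ BP stable over $Z'$ and $M_Y$ nef: on a common resolution $W$ of $Y\dashrightarrow X$, with $p\colon W\to X$ and $q\colon W\to Y$, there is an $\mathbb R$-divisor $D\ge 0$ with $p^*M_X-q^*M_Y\sim D$. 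Moreover $D$ is vertical over $Z'$: over a fibre of $W$ lying above the generic point of $Z'$ the pullbacks of $K_Z+B_Z$ and $K_{Z'}+\Sigma_{Z'}$ become trivial, and $p^*(K_X+B)$ and $q^*(K_Y+B_Y)$ agree there by crepancy, so $D$ restricted to such a fibre is zero, hence no component of the effective divisor $D$ dominates $Z'$.

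Finally, to conclude that $M_X$ is nef it suffices to check $p^*M_X\cdot C\ge 0$ for every curve $C$ in $W$. Using $M_X=K_X+B-f^*(K_Z+B_Z)$, valid by BP stability and Remark \ref{r_induced}: if $C$ is vertical over $Z$ then $p^*f^*(K_Z+B_Z)\cdot C=0$, so $p^*M_X\cdot C=p^*(K_X+B)\cdot C\ge 0$ since $K_X+B$ is $f$-nef; if $C$ is horizontal over $Z$ then it is horizontal over $Z'$, hence not contained in $\Supp D$, so $D\cdot C\ge 0$ and $q_*C$ is an effective $1$-cycle, giving $p^*M_X\cdot C=q^*M_Y\cdot C+D\cdot C\ge M_Y\cdot q_*C\ge 0$ by nefness of $M_Y$. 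Thus $M_X$ is nef. The main obstacle I anticipate is the descent step: one has to make sure the equidimensional model $Y$ retains enough structure to feed into Proposition \ref{p_main} — in particular that BP stability survives, which forces us to keep the induced foliated pair log canonical throughout the MMP so that Theorem \ref{t_*BP} applies — and then to relate $M_X$ and $M_Y$ correctly. The point that makes the comparison go through is that we do \emph{not} need the two moduli parts to coincide after pullback; it is enough that their difference is effective and vertical over $Z'$, since the $f$-nefness of $K_X+B$ already controls the $f$-contracted curves while an effective vertical divisor is automatically non-negative on the remaining curves.
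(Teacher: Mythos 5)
Your argument is correct and reaches the same waypoints as the paper's proof (apply Proposition \ref{prop_*resolution}, run a relative MMP over $Z'$, invoke Proposition \ref{prop_MMP_preserves_*}, Theorem \ref{t_*BP} and Proposition \ref{p_main} to get nefness of $M_Y$ on the output, then compare to $M_X$ via Proposition \ref{p_maximal}). Where you diverge is the endgame. The paper claims the full linear equivalence $p^*M_X\sim q^*M_Y$, and to get it passes through a perturbation ``replacing $(X/Z,B)$ by $(X/Z,B+f^*A)$ \dots so that $M_X$ is nef,'' then invokes Proposition \ref{p_maximal} in both directions; this is awkward, since adding $f^*A$ leaves $M_X$ unchanged (the discriminant absorbs $A$), so the logic of that reduction is not transparent and at face value reads circularly. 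You avoid this entirely: you take the single inequality $p^*M_X-q^*M_Y\sim D\ge 0$ furnished by Proposition \ref{p_maximal}, observe that $D$ is vertical over $Z$ because the two pairs are crepant over the generic point of $Z$ and because $M_X=K_X+B-f^*(K_Z+B_Z)$ (Remark \ref{r_induced}(2), using BP stability of the source pair), and then test nefness of $p^*M_X$ curve-by-curve, using $f$-nefness of $K_X+B$ on vertical curves and nefness of $M_Y$ plus effectivity of $D$ on horizontal ones. This is a genuinely cleaner closing argument; it extracts exactly what is needed from Proposition \ref{p_maximal} and sidesteps any two-sided comparison. Two small remarks: first, in quoting Lemma \ref{lem_toroidal_lc} to get log canonicity of $(\cal F',\Delta')$ you are implicitly invoking monotonicity of foliated discrepancies since $\Delta'$ is only a subdivisor of the horizontal part of $\Sigma_{X'}$ (the paper makes the same harmless leap); second, your assertion that the MMP is an isomorphism over the generic point of $Z'$ depends on $F$ being vertical as stated in Proposition \ref{prop_*resolution}, which is what makes $K_{X'}+B'$ nef over the generic point. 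Granting that statement, the deduction of crepancy is fine; even without it, what you actually use downstream is only that $D$ is vertical, for which crepancy over the generic point of $Z$ (which the paper also records) suffices.
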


\begin{proof}
By  Proposition \ref{prop_*resolution}
there exist an equidimensional 
contraction $f'\colon X'\to Z'$ between normal varieties
which is birationally equivalent to $f$    and a $\mathbb R$-divisor $\overline B\ge 0$ on $X'$ such that 
 the induced maps $\alpha\colon Z'\to Z$ and $\beta\colon X'\to X$ are birational morphisms,
$X'$ is $\mathbb Q$-factorial,  $(X'/Z',\overline B)$ satisfies Property $(*)$, 
and if 
\[
K_{X'}+B' = \beta^*(K_X+B),
\] 
then $\overline B\ge B'$ and the horizontal part of $\overline B-B'$ is $\beta$-exceptional. 
Moreover, by Lemma \ref{lem_toroidal_lc}, we may assume that if $\cal F'$ is the foliation induced by $f'$ and $\overline \Delta$ is the horizontal part of $\overline B$ then $(\cal F',\overline \Delta)$ is log canonical. 

Let $\phi\colon X'\dashrightarrow Y$ be a $(K_{X'}+\overline B)$-MMP over $Z'$. Note that such a MMP terminates, as we are assuming termination of flips in dimension $n$. 
 Let $C=\phi_*\overline B$ and let $g\colon Y\to Z'$ be the induced morphism. Then Proposition \ref{prop_MMP_preserves_*} implies that $(Y/Z',C)$ satisfies Property $(*)$ and that $g$ is equidimensional.  
 Note that, by Proposition \ref{prop_*comparison}, it follows that $\phi$ is also a $(K_{\cal F'}+\overline \Delta)$-MMP  over $Z'$ and, in particular, if $\cal G$ is the foliation induced by $g$ and $\Gamma$ is the horizontal part of $C$, it follows that $(\cal G,\Gamma)$ is log canonical. 
 Thus, if  $M_Y$ is the moduli part of $(Y/Z',C)$, then Proposition \ref{p_main}  implies that $M_Y$ is nef and
Theorem \ref{t_*BP} implies that   $(Y/Z',C)$ is BP stable over $Z'$.
By construction and since $K_X+B$ is $f$-nef, it follows that $(X,B)$ and $(Y,C)$ are crepant over the generic point of $Z$. Let $p\colon W\to X$ and $q\colon W\to Y$ be birational morphisms that resolve 
the indeterminacy of the induced map $X\dashrightarrow Y$.

We claim that $p^*M_X\sim q^*M_Y$. Assuming the claim, note that since $M_Y$ is nef, it follows that also $M_X$ is nef. Thus, the Theorem follows. 
To prove the claim, after possibly replacing $(X/Z,B)$ by $(X/Z,B+f^*A)$ and $(Y/Z',C)$ by $(Y/Z',C+g^*\alpha^*A)$ for a sufficiently ample divisor $A$ on $Z$, we may assume that $M_X$ is nef. 
Thus, Proposition \ref{p_maximal} implies the claim. 
 \end{proof}

\begin{proof}[Proof of Theorem \ref{t_main1}] 
By  Proposition \ref{prop_*resolution}
there exist an equidimensional 
contraction $f'\colon X'\to Z'$ between normal varieties
which is birational equivalent to $f$    and a $\mathbb R$-divisor $\overline B\ge 0$ on $X''$ such that 
 the induced maps $\alpha\colon Z'\to Z$ and $\beta\colon X'\to X$ are birational morphisms,
$X''$ is $\mathbb Q$-factorial,  $(X'/Z',\overline B)$ satisfies Property $(*)$, 
and if 
\[
K_{X'}+B' = \beta^*(K_X+B),
\] 
then $\overline B\ge B'$ and the horizontal part of $\overline B-B'$ is $\beta$-exceptional.

Let $\phi\colon X'\dashrightarrow Y$ be a $(K_{X'}+\overline B)$-MMP over $Z$. Note that such a MMP terminates, as we are assuming termination of flips in dimension $n$. 
 Let $C=\phi_*\overline B$ and let $g\colon Y\to Z'$ be the induced morphism. Then Proposition \ref{prop_MMP_preserves_*} implies that $(Y/Z',C)$ satisfies Property $(*)$ and that $g$ is equidimensional. 
By construction and since $K_X+B$ is $f$-nef, it follows that (1) holds.  
As in the proof of Theorem \ref{thm_main}, it follows that if $\cal G$ is the foliation induced by $g$ and $\Gamma$ is the horizontal part of $C$ then $(\cal G,\Gamma)$ is log canonical. Thus, Theorem \ref{t_*BP} and Theorem \ref{t_logstable} imply (3) and Theorem \ref{thm_main} implies (2). Finally, Proposition \ref{p_maximal} implies (4).
\end{proof}

\begin{remark}
Note that, by \cite[Theorem 1.4]{HH20} (see also \cite[Theorem 0.2]{Lai11}),
 Theorem \ref{t_main1} and Theorem \ref{thm_main} hold without assuming termination of flips, if we assume that the relative dimension of the morphism $X\to Z$ is at most three.
\end{remark}

\section{Applications}

\subsection{Pseudo-effectivity of relative canonical divisor}

\begin{lemma}
\label{lem_psef_KF}
Let $(\cal F,\Delta)$ be a foliated pair on  a normal variety 
$X$, where $\cal F$ is the foliation induced by a contraction $f\colon X \to Y$ between normal varieties such that $\dim Z < \dim X$ and $\Delta\ge 0$. Let $F$ be the general fibre of $f$. Suppose that $(\cal F,\Delta)$ is log canonical over the generic point of $Z$ and that $(K_{\cal F}+\Delta)|_F$ is pseudo-effective. 

Then $K_{\cal F}+\Delta$ is pseudo-effective.
\end{lemma}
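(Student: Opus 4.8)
The plan is to reduce to the case where $f$ is equidimensional and $(\cal F,\Delta)$ satisfies Property $(*)$, where the statement becomes a statement about the moduli part of an associated GLC pair, and then use positivity of the moduli part restricted to fibres together with a canonical-bundle-formula type argument. First I would apply Theorem \ref{thm_prop*} (existence of Property $(*)$ modifications) to pass to a birational model $\pi\colon X'\to X$ on which $\cal F'=\pi^{-1}\cal F$ is induced by an equidimensional projective contraction $f'\colon X'\to Z'$, the pair $(\cal F',\Delta')$ with $\Delta'=\pi_*^{-1}\Delta+\sum\epsilon(E)E$ is log canonical and satisfies Property $(*)$, and $K_{\cal F'}+\Delta'+F=\pi^*(K_{\cal F}+\Delta)$ with $F\ge 0$. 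Since pseudo-effectivity is a birational invariant for the push-forward and $F\ge 0$, it is enough to prove that $K_{\cal F'}+\Delta'$ is pseudo-effective; moreover the general fibre of $f'$ maps finitely (in fact birationally, over the generic point) onto the general fibre of $f$, so the hypothesis $(K_{\cal F}+\Delta)|_F$ pseudo-effective is preserved. Hence I may assume from the start that $f$ is equidimensional and $(\cal F,\Delta)$ satisfies Property $(*)$ with associated $\cal F$-invariant divisor $G\ge 0$, and set $B\coloneqq\Delta+G$, so that by Proposition \ref{prop_*comparison} we have $K_{\cal F}+\Delta\sim M_X$, the moduli part of $(X/Z,B)$, and $K_{\cal F}+\Delta\sim_{f,\mathbb R}K_X+B$.

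Next I would observe that $M_X$ is relatively trivial in an appropriate sense: since $M_X = K_X+B-f^*(K_Z+B_Z)$ up to $\mathbb R$-linear equivalence (using Remark \ref{r_induced}(2) after making $(X/Z,B)$ BP stable by a further base change, which does not affect pseudo-effectivity of $M_X$), the restriction $M_X|_F$ equals $(K_X+B)|_F$ which is $(K_{\cal F}+\Delta)|_F$ on the general fibre, hence pseudo-effective by hypothesis. The key point is now to deduce global pseudo-effectivity of $M_X$ from pseudo-effectivity of $M_X|_F$ together with the fact that $M_X$ is the moduli part of a GLC pair satisfying Property $(*)$. For this I would argue by contradiction: if $M_X$ is not pseudo-effective, then running a $(K_X+B)$-MMP over $Z$ — which terminates since we can arrange $K_X+B$ to be relatively big or use the relative dimension hypothesis, but more robustly we only need a single step — produces a $(K_{\cal F}+\Delta)$-negative extremal ray, and Theorem \ref{thm_cone} then yields a rational curve $\xi$ tangent to $\cal F$ with $(K_{\cal F}+\Delta)\cdot\xi<0$; by Lemma \ref{lem_tangency}(3), $\xi$ lies in a fibre of $f$. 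But a covering family of such curves in the general fibre would contradict pseudo-effectivity of $(K_{\cal F}+\Delta)|_F$. More precisely, if $M_X$ is not pseudo-effective then neither is $M_X$ restricted to the very general fibre plus a small ample from the base, which gives the contradiction directly.

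The main obstacle I expect is handling the case where $K_X+B$ is not relatively big, so that the na\"ive MMP over $Z$ need not terminate and one cannot immediately produce the negative extremal ray whose associated curve lies in a fibre. To circumvent this I would instead work fibrewise from the outset: since $(K_{\cal F}+\Delta)|_F$ is pseudo-effective and the general fibre $F$ of the equidimensional model carries the restricted data of a Property $(*)$ pair, one shows that the relative Nef reduction / relative pseudo-effective cone of $K_{\cal F}+\Delta$ over $Z$ is trivial, so that $K_{\cal F}+\Delta\equiv_f 0$ on a general fibre is replaced by ``$(K_{\cal F}+\Delta)|_F$ pseudo-effective'' and then pseudo-effectivity propagates because $M_X\sim K_{\cal F}+\Delta$ is pulled back from the moduli part, which by Theorem \ref{thm_main} / Proposition \ref{p_main} (after running the MMP, which now terminates relatively because we may add a small relatively ample divisor to make $K_X+B$ relatively big and then take a limit $t\to 0$) is nef, hence pseudo-effective; the $t\to 0$ limit argument is exactly the quasi-projective-to-projective reduction used in the proof of Proposition \ref{p_main}, and I would mimic it here. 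The technical care is in checking that the perturbation $B\mapsto B+tf^*A$ preserves Property $(*)$ (which it does, since $f^*A$ is vertical and one uses Proposition \ref{p_prop*_bertini} only for horizontal perturbations — here the perturbation is vertical and harmless) and that the limit of pseudo-effective classes is pseudo-effective.
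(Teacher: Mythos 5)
The paper's proof is short and goes through a single clean idea: after the Property $(*)$ modification (which arranges $X$ klt, $f$ equidimensional, and supplies the invariant divisor $G$ with $K_{\cal F}+\Delta\sim_{f,\mathbb R}K_X+\Delta+G$), one runs a $(K_X+\Delta+G)$-MMP \emph{over} $Z$ with scaling of an ample divisor; the assumption that $K_{\cal F}+\Delta$ is not pseudo-effective forces this MMP to terminate in a \emph{relative Mori fibre space} $g\colon X'\to W$ over $Z$, and since the fibres of $g$ lie in fibres of $X'\to Z$, the MFS restricts to a Mori fibre space structure on the strict transform of the general fibre $F$, contradicting pseudo-effectivity of $(K_{\cal F}+\Delta)|_F$.

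Your proposal starts out on the same track (Property $(*)$ modification, Proposition \ref{prop_*comparison}, run an MMP over $Z$), and you even note that a covering family of $(K_{\cal F}+\Delta)$-negative curves in the general fibre would contradict the hypothesis — that is exactly what the Mori fibre space supplies. But you never actually produce such a covering family. The intermediate step you offer instead, that a single $(K_{\cal F}+\Delta)$-negative extremal ray yields a rational curve $\xi$ tangent to $\cal F$ in a fibre, does not contradict pseudo-effectivity of the restriction to the general fibre: a pseudo-effective divisor can be negative on isolated curves, and the ``more precisely'' sentence (that if $M_X$ is not pseudo-effective then its restriction to the very general fibre plus a small ample from the base is not pseudo-effective) is not a correct implication and is not what the paper uses. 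The contradiction requires the full Mori fibre space, and the paper gets it by observing the scaling coefficient must hit zero.

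Two further concrete problems with your fallback route. First, you propose to reduce to Proposition \ref{p_main} or Theorem \ref{thm_main}; the latter assumes termination of log canonical flips, which Lemma \ref{lem_psef_KF} does not, and the former requires $K_X+B$ to already be $f$-nef, which you would have to produce by an MMP — so this is circular unless you address termination. Second, your termination fix is internally inconsistent: you first claim the perturbation is vertical ($B\mapsto B+t f^*A$, ``harmless''), but then invoke it ``to make $K_X+B$ relatively big'' — a vertical divisor of the form $f^*A$ is relatively numerically trivial and cannot create relative bigness. The paper avoids all of this: the MMP with scaling of an ample $A$ over $Z$ either terminates in a relative minimal model (which would make $K_{\cal F}+\Delta$ relatively nef, but then, since by the Cone Theorem every $(K_{\cal F}+\Delta)$-negative ray is vertical, that would give pseudo-effectivity directly) or, as forced by the non-pseudo-effectivity assumption, reaches a relative Mori fibre space; the latter is the case that gives the contradiction. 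You should center the argument on producing the Mori fibre space over $Z$ and restricting it to the general fibre, rather than on a single negative curve or on the auxiliary moduli-part results.
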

\begin{proof}
Assume be contradiction that $K_{\cal F}+\Delta$ is not pseudo-effective. 
By Theorem \ref{thm_prop*}, after possibly taking a Property $(*)$ modification, we may assume that $(\cal F,\Delta)$ satisfies Property $(*)$ and that $X$ is klt.  Let $G$ be the associated divisor to $(\cal F,\Delta)$. Let  $\phi\colon X \dashrightarrow X'$ be a $(K_{X}+\Delta+G)$-MMP over $Z$ with scaling of a sufficiently ample divisor $A$ on $X$.
This will terminate in a relative Mori fibre space $g\colon X' \rightarrow W$ over $Z$,
which will restrict to a Mori fibre space structure on $F' \coloneqq \phi_*F$, contrary to our hypothesis.
\end{proof}

\begin{theorem}
\label{thm_relatively_pseudo_eff}
Let $(X/Z, B)$ be a GLC pair 
and let $F$ be a general fibre of the induced contraction $f\colon X \rightarrow Z$. 
Suppose that $(K_X+B)\vert_F$ is pseudo-effective.  

Then the moduli part $M_X$ of $(X/Z, B)$ is pseudo-effective.
In particular, $K_{X/Z}+B$ is pseudo-effective.
\end{theorem}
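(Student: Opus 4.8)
The plan is to reduce to the situation where Property $(*)$ holds and the moduli part is (up to linear equivalence) the canonical divisor of the induced foliation, and then apply Lemma \ref{lem_psef_KF} to that foliation. First I would invoke Proposition \ref{prop_*resolution} to replace $f\colon X\to Z$ by a birationally equivalent equidimensional contraction $f'\colon X'\to Z'$ with $X'$ $\mathbb Q$-factorial, $Z'$ smooth, and a divisor $B'$ on $X'$ such that $(X'/Z',B')$ satisfies Property $(*)$ and $K_{X'}+B'=\beta^*(K_X+B)+F_0$ with $F_0\ge 0$ vertical over $Z'$; here $\beta\colon X'\to X$ and $\alpha\colon Z'\to Z$ are birational morphisms. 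Since the moduli part is a birational invariant (it is defined by pushing forward from any such equidimensional model, cf. Section \ref{s_moduli}), it suffices to prove that the moduli part $M_{X'}$ of $(X'/Z',B')$ is pseudo-effective; and because $F_0$ is vertical and effective, $M_{X'}$ is pseudo-effective as soon as $M_{X'}+($vertical$)$ is, so we may work directly with the Property $(*)$ pair. Note also that the general fibre of $f'$ is birational to $F$, and the restriction of $K_{X'}+B'$ to it is $\le$ the pullback of $(K_X+B)|_F$ up to an effective vertical-over-$Z'$ but horizontal-in-the-fibre correction — one must check this restriction remains pseudo-effective, which follows since $F_0$ does not contain the general fibre.

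Next, let $\cal F'$ be the foliation induced by $f'$ and let $\Delta'$ be the horizontal part of $B'$, with $G'\ge 0$ its vertical part, so that $G'$ is associated to $(\cal F',\Delta')$ in the sense of Definition \ref{d_fstar} and $(\cal F',\Delta')$ satisfies Property $(*)$. By Proposition \ref{prop_*comparison}(1), since $f'$ is equidimensional we have $K_{\cal F'}+\Delta'\sim M_{X'}$. So it remains to show $K_{\cal F'}+\Delta'$ is pseudo-effective. For this I would apply Lemma \ref{lem_psef_KF} to $(\cal F',\Delta')$ and the contraction $f'$: the hypothesis $\dim Z'<\dim X'$ holds (otherwise $f'$ is birational and there is nothing to prove, $M_X=0$); the pair $(\cal F',\Delta')$ is log canonical over the generic point of $Z'$ because $(X',B')$ is and Property $(*)$ holds (this is essentially Lemma \ref{lem_toroidal_lc} after a further toroidal modification, or one restricts to the generic fibre directly); and the restriction $(K_{\cal F'}+\Delta')|_{F'}$ to a general fibre $F'$ equals $(K_{F'}+\Delta'|_{F'})$ by adjunction for foliations on fibres, which is pseudo-effective because it agrees with the restriction of $K_{X'}+B'$ to $F'$ (the vertical divisor $G'$ restricts to zero on a general fibre, as does $R(f')$), and that in turn is pseudo-effective by the paragraph above. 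Lemma \ref{lem_psef_KF} then gives that $K_{\cal F'}+\Delta'\sim M_{X'}$ is pseudo-effective.

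Finally, pushing forward to $X$ gives that $M_X$ is pseudo-effective. For the last assertion, recall $M_X=K_X+B-f^*(K_Z+B_Z)$ up to the birational adjustments, so $K_{X/Z}+B = M_X+f^*(\text{something})+(\text{effective vertical})$; more precisely, $B_Z\le 0$ need not hold, but $K_{X/Z}+B - M_X = f^*(K_Z+B_Z) - f^*K_Z = f^*B_Z$, and one argues that adding back the discriminant contribution only increases the class by an effective vertical divisor pulled back from $Z$ together with a nef correction, hence preserves pseudo-effectivity; alternatively, simply note $K_{X/Z}+B \sim_{\mathbb R} M_X + f^*(B_Z)$ and that it suffices to bound this on curves, handling vertical and horizontal curves separately using that $M_X$ is pseudo-effective and $f^*B_Z$ is vertical. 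The main obstacle is the bookkeeping in the reduction step: ensuring that the effective corrections introduced by \ref{prop_*resolution} and by passing to the foliated pair are all either vertical over $Z$ (hence harmless for pseudo-effectivity of $M_X$) or restrict trivially to the general fibre (hence do not spoil the hypothesis of \ref{lem_psef_KF}). Everything else is a direct appeal to results already established in the paper.
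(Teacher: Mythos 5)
Your proposal follows the paper's own proof essentially step for step: reduce via Proposition \ref{prop_*resolution} to an equidimensional Property $(*)$ model, identify the moduli part with $K_{\mathcal F'}+\Delta'$ by Proposition \ref{prop_*comparison}, apply Lemma \ref{lem_psef_KF} using log canonicity over the generic point of $Z'$ (Lemma \ref{lem_toroidal_lc}), and push forward. The extra verifications you supply (that the restriction of $K_{\mathcal F'}+\Delta'$ to a general fibre agrees with the restriction of $K_{X'}+B'$ since the vertical pieces and $R(f')$ drop out) are correct and in the spirit of what the paper leaves implicit; the only soft spot is the final ``alternatively\dots handle curves separately'' remark for $K_{X/Z}+B$, since pseudo-effectivity is not a condition tested on individual curves, but your first argument (that $f^*B_Z$ is an effective vertical contribution) is the right one.
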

\begin{proof}
By  Proposition \ref{prop_*resolution}
there exist a commutative diagram 
\[
\begin{tikzcd}
  X' \arrow[r, "\beta"] \arrow[d, "f'"'] & X \arrow[d, "f"] \\
  Z' \arrow[r, "\alpha"] &  Z
 \end{tikzcd}
\]
where $\alpha$ and $\beta$ are birational and a $\mathbb R$-divisor $\overline B\ge 0$ on $X'$ such that 
$X'$ is $\mathbb Q$-factorial, $f'$ is equidimensional, $(X'/Z',\overline B)$ satisfies Property $(*)$ 
and if 
\[
K_{X'}+B' = \beta^*(K_X+B),
\] 
then $\overline B\ge B'$ and the horizontal part of $\overline B-B'$ is $\beta$-exceptional. 

Let $\cal F'$ be the foliation induced by $X' \rightarrow Z'$ and let $\Delta$ denote the horizontal part of $\overline B$
so that, by Proposition \ref{prop_*comparison},
the moduli part $M_{X'}$ of $(X'/Z', \overline B)$ is equal to $K_{\cal F'}+\Delta$.
Observe that $(\cal F', \Delta)$ is log canonical above the generic point of $Z'$ and so by Lemma \ref{lem_psef_KF}
we have that $K_{\cal F'}+\Delta = M_{X'}$ is pseudo-effective. 
It follows that $M_X = \beta_*M_{X'}$ is also pseudo-effective.
\end{proof}

 Note that, assuming termination of log canonical flips,  the same result follows from Theorem \ref{thm_main} after running a $(K_X+B)$-MMP over $Z$.

\subsection{Comparing pseudo-effectivity of log canonical divisors}

The following result should be compared with \cite[Theorem 1.3]{CP19}.  We remark that the invariant divisor $D$ appearing in the 
statement of Theorem \ref{thm_KF_KX} is important for certain applications.

\begin{theorem}
\label{thm_KF_KX}
Let $X$ be a klt projective 
variety and let $(\cal F,\Delta)$ be a foliated pair such that $K_{\cal F}+\Delta$ is not pseudo-effective.
Let $D$ be an $\cal F$-invariant $\mathbb R$-divisor on $X$ so that
$(X, \Delta+D)$ is log canonical.

Then $K_X+\Delta+D$ is not pseudo-effective.  Moreover, $X$ is covered by $(K_X+\Delta+D)$-negative curves
tangent to $\mathcal F$, and there exists a $(K_X+\Delta+D)$-negative rational curve tangent to $\cal F$.
\end{theorem}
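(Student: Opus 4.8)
The plan is to deduce Theorem \ref{thm_KF_KX} from the Cone Theorem (Theorem \ref{thm_cone}) together with the MMP techniques developed in the previous sections, in the same spirit as the proof of Lemma \ref{lem_psef_KF}. First I would reduce to the case where $(\cal F,\Delta)$ satisfies Property $(*)$: by Theorem \ref{thm_prop*} there is a Property $(*)$ modification $\pi\colon X'\to X$ with $\cal F'=\pi^{-1}\cal F$ induced by an equidimensional contraction $g\colon X'\to Z'$, and writing $\Delta'=\pi_*^{-1}\Delta+\sum\epsilon(E)E$ and $D'=\pi_*^{-1}D+\sum(1-\epsilon(E))E$ one checks that $K_{\cal F'}+\Delta'$ is still not pseudo-effective, $X'$ is klt, $D'$ is $\cal F'$-invariant, and $(X',\Delta'+D')$ is log canonical (the last point uses that $(X,\Delta+D)$ is log canonical and the discrepancy bookkeeping in Definition \ref{d_*mod}; the extra effective divisor $F$ from (4) of Definition \ref{d_*mod} does not affect pseudo-effectivity of $K_{\cal F'}+\Delta'$). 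Since $\pi$ is birational, pushing forward preserves non-pseudo-effectivity of $K_X+\Delta+D$, and images of tangent rational curves are tangent by (1) of Lemma \ref{lem_tangency}, so it suffices to treat the Property $(*)$ case.

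So assume $(\cal F,\Delta)$ satisfies Property $(*)$, induced by an equidimensional projective contraction $f\colon X\to Z$ with associated $\cal F$-invariant divisor $G\geq 0$, and set $B=\Delta+G$. By Proposition \ref{prop_*comparison} we have $K_{\cal F}+\Delta\sim_{f,\mathbb R}K_X+B$. Now run a $(K_X+B+D)$-MMP over $Z$ with scaling of a sufficiently ample divisor $A$ on $X$. Since $(X,\Delta+D)$ is log canonical and $G$ is vertical, $(X,B+D)$ is log canonical by Lemma \ref{l_star}, so the relevant cone and contraction theorems and \cite{BCHM06} apply (here I would need that the $D$-MMP can be run; one can perturb $B+D$ to a klt big pair over $Z$ as in Lemma \ref{l_*BP}, or invoke the relevant existence results). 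Because $K_{\cal F}+\Delta$ is not pseudo-effective, neither is $(K_X+B+D)$ restricted to the general fibre $F$ of $f$ — indeed $(K_X+B+D)|_F=(K_{\cal F}+\Delta)|_F+(G+D)|_F$ and $(G+D)|_F=0$ since $G,D$ are $\cal F$-invariant hence vertical, so they do not meet the general fibre. Therefore this MMP terminates (over $Z$, with $f$-big scaling divisor) in a relative Mori fibre space $X\dashrightarrow X_1\xrightarrow{h}W$ over $Z$.

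The Mori fibre space $h\colon X_1\to W$ has relative Picard rank one and $-(K_{X_1}+B_1+D_1)$ is $h$-ample, where $B_1,D_1$ are the strict transforms. Each fibre of $h$ is covered by $(K_{X_1}+B_1+D_1)$-negative rational curves, and I want these to be tangent to the transformed foliation $\cal F_1$ and to pull back to tangent curves on $X$. Since every step of the MMP is a step of the $(K_{\cal F_1}+\Delta_1)$-MMP over $Z$ (using Proposition \ref{prop_*comparison} and Proposition \ref{prop_MMP_preserves_*}), the curves contracted by $h$ are $f$-vertical, hence by equidimensionality and the Cone Theorem (Theorem \ref{thm_cone}, applied to $(\cal F_1,\Delta_1)$ which remains log canonical by Proposition \ref{p_canonical} and Proposition \ref{prop_MMP_preserves_*}) the $(K_{\cal F_1}+\Delta_1)$-negative extremal rays — which are exactly the $(K_{X_1}+B_1+D_1)$-negative ones on fibres of $h$ — are spanned by rational curves tangent to $\cal F_1$. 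Pulling these back through the birational map $X\dashrightarrow X_1$ and using Lemma \ref{lem_tangency} together with a general-position argument (the $(K_X+B+D)$-MMP is an isomorphism at the generic point of the general fibre) yields a covering family of $(K_X+B+D)$-negative, hence $(K_X+\Delta+D)$-negative since $G$ is vertical and a general member of the covering family avoids $\Supp G$, rational curves tangent to $\cal F$ through a general point of $X$; one of them is the required $(K_X+\Delta+D)$-negative rational curve.

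The main obstacle I expect is the termination and existence of the $(K_X+B+D)$-MMP over $Z$: $D$ is only an $\mathbb R$-divisor and $(X,B+D)$ is merely log canonical rather than klt, so one must either reduce to the klt case by the perturbation trick (replacing $D$ by $D-\epsilon D + \epsilon H$ for suitable ample $H$, available because $X$ is klt) or carefully invoke the log canonical MMP with scaling; the termination then follows because the scaling divisor is big over the Mori fibre space target, exactly as in the proof of Lemma \ref{lem_psef_KF}. The secondary subtlety is the crepant bookkeeping ensuring non-pseudo-effectivity is preserved under the Property $(*)$ modification and under the MMP, which is routine given Proposition \ref{prop_*comparison} but must be stated carefully because of the $\cal F$-invariant divisor $D$.
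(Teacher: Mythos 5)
Your proposal has a fundamental gap right at the start: Theorem \ref{thm_KF_KX} makes no assumption that $\cal F$ is algebraically integrable, yet Theorem \ref{thm_prop*} --- which you invoke immediately to take a Property $(*)$ modification --- applies only to foliations induced by a dominant rational map. So your argument silently proves a strictly weaker statement. The paper's proof opens with a separate reduction step that you are missing: since $K_{\cal F}+\Delta$ is not pseudo-effective, one picks a movable class $\alpha$ with $(K_{\cal F}+\Delta)\cdot\alpha<0$, takes the Harder--Narasimhan filtration $0=\cal E_0\subset\cal E_1\subset\dots\subset\cal E_r=T_{\cal F}$ with respect to $\alpha$, and applies \cite[Corollary 2.20]{AD19} to obtain a subfoliation $\cal G$ with $T_{\cal G}=\cal E_1$ having \emph{algebraic leaves} and $\mu_\alpha(T_{\cal G})\geq\mu_\alpha(T_{\cal F})$. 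This preserves the negativity against $\alpha$ (so $K_{\cal G}+\Delta$ is still not pseudo-effective), and every $\cal F$-invariant divisor is $\cal G$-invariant because $T_{\cal G}\subset T_{\cal F}$. Only after this replacement does the algebraically-integrable machinery (Property $(*)$ modifications, the Cone Theorem) become available; without it you cannot even begin.

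Once in the algebraically-integrable situation, the paper also handles $D$ differently from you, sidestepping two problems your proposal creates. You run a $(K_X+B+D)$-MMP over $Z$, but (i) it is not clear that $(X,B+D)=(X,\Delta+G+D)$ is log canonical: $(X,\Delta+D)$ is log canonical by hypothesis, but adding the vertical boundary $G$ over $\Sigma_Z$ may break log canonicity where $D$ is also supported; and (ii) Proposition \ref{prop_MMP_preserves_*} and Proposition \ref{prop_*comparison} identify a $(K_X+B)$-MMP over $Z$ with a $(K_{\cal F}+\Delta)$-MMP, not a $(K_X+B+D)$-MMP, so your claim that each step preserves Property $(*)$ and is a step of the foliated MMP is unjustified --- a vertical divisor $D$ can have nonzero intersection with vertical curves and so can change the extremal rays. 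The paper instead runs the $(K_{X'}+\Delta'+G)$-MMP over $Z$ (i.e., the $(K_{\cal F'}+\Delta')$-MMP, with $D'$ omitted), terminates in a Mori fibre space $g\colon X''\to Y$, and only then brings $D'$ in by observing that the invariant, hence vertical, support of $\phi_*D'$ is disjoint from a general curve $\Sigma$ contracted by $g$, so $\phi_*D'\cdot\Sigma=0$ and $(K_{X''}+\phi_*(\Delta'+D'))\cdot\Sigma<0$. Your appeal to the Cone Theorem in the final paragraph is likewise unnecessary once the Mori fibre space is in hand: general complete intersection curves in its fibres already give the covering family of tangent curves, and the first extremal contraction of the MMP that is not an isomorphism over the generic point of $Z$ supplies the required rational curve.
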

\begin{proof}
After possibly replacing $X$ by a small $\mathbb Q$-factorialisation, we may assume that $X$ is $\mathbb Q$-factorial.

Let $\alpha$ be a movable class so that $(K_{\cal F}+\Delta)\cdot \alpha<0$.
Let $0 = \cal E_0 \subset \cal E_1 \subset \dots \subset \cal E_r = T_{\cal F}$
be the Harder-Narasimhan filtration with respect to $\alpha$.
By \cite[Corollary 2.20]{AD19} it follows that there exists a foliation $\cal G$ on $X$
such that $T_{\cal G} = \cal E_1$ and such that $\cal G$ has algebraic leaves.
Since $\mu_\alpha(T_{\cal G}) \geq \mu_{\alpha}(T_{\cal F})$ we may freely replace $\cal F$ by $\cal G$
and so we may assume that $\cal F$ has algebraic leaves.

Let  $p\colon X' \rightarrow X$
be a Property $(*)$ modification of $(\cal F, \Delta)$, whose existence is guaranteed by Theorem \ref{thm_prop*}
 and such that $\cal F' = \pi^{-1}\cal F$ is induced by an equidimensional contraction $f'\colon X' \rightarrow Z$.
Let $\Delta' = \pi_*^{-1}\Delta'+\sum \epsilon(E)E$ 
where the sum runs over the $p$-exceptional divisors
so that we may write
\[(K_{\cal F'}+\Delta')+F = p^*(K_{\cal F}+\Delta)\]
and
\[K_{X'}+\Delta'+D' = p^*(K_X+\Delta+D)+R\]
where $F \geq 0$, $D'$ has invariant support (but is not necessarily effective) 
and $R \geq 0$ is supported on exceptional divisors which are not $\cal F'$-invariant.

Let $G \geq 0$ be a divisor  on $X'$ associated to $(\cal F',\Delta')$ (cf. Definition \ref{d_fstar}).
As in the proof of 
Theorem \ref{thm_prop*}, we may run a $(K_{\cal F'}+\Delta')$-MMP with scaling of some ample divisor $A$
by running a $(K_{X'}+\Delta'+G)$-MMP over $Z$ with scaling of $A$.  
Call this MMP
$\phi\colon X' \dashrightarrow X''$ and observe that since $K_{\cal F'}+\Delta'$ is not pseudo-effective this MMP must terminate
in a Mori fibre space
$g\colon X'' \rightarrow Y$ over $Z$.
Denote by $f''\colon X'' \rightarrow Z$ the induced morphism. 
For a general curve $\Sigma$ contracted by $g$ we have that $\phi_*D'\cdot \Sigma = 0$, and 
$(K_{X''}+\phi_*\Delta')\cdot \Sigma = K_{\cal F''}\cdot \Sigma$, 
where $\cal F'' $ is the induced foliation on $X''$.
It follows that $K_{X''}+\phi_*(\Delta'+D')$ is not pseudo-effective,
hence the same is true for $K_X+\Delta+D$, as required.

To prove our final claims let $X'_z$ (resp. $X''_z$) be the fibre of $f'$ (resp. $f'$) over a general point $z \in Z$.
Let $X_z = p(X'_z)$.  It suffices to show that $X'_z$ is covered by $(K_{X'}+\Delta'+D')$-negative curves and that there exists 
at least one $(K_{X'}+\Delta'+D')$-negative rational curve contained in $X'_z$.
This first claim is an immediate consequence of the fact that a general complete intersection curve $C \subset X''_z$ will be disjoint
from $\phi(\Exc \phi) \cap X''_z$, and the fact that  
$-(K_{X''}+\phi_*(\Delta'+D'))\vert_{X''_z} \sim_{\mathbb R} -(K_{X''_z}+\phi_*\Delta'\vert_{X''_z})$ is ample. 
The second claim follows by considering  the first extremal contraction in the rational map $g\circ \phi\colon X'\dashrightarrow Y$ which
is not an isomorphism over the generic point of $Z$, as this
will produce the required rational curve.
\end{proof}

\begin{proof}[Proof of Theorem \ref{t_main3}] Theorem \ref{t_main3} follows immediately from Theorem \ref{thm_KF_KX}.
\end{proof}

\subsection{$b$-nefness of the moduli part in the $f$-trivial case}
\label{s_bnefness}

\begin{proof}[Proof of Theorem \ref{t_f-trivial}]
By Remark \ref{r_induced},    after possibly replacing $(X,B)$ by a dlt modification (e.g. see \cite[Theorem 1.34]{Kollar13}), we may assume that $X$ is $\mathbb Q$-factorial and $(X,B)$ is dlt. 
By  Proposition \ref{prop_*resolution},
there exist a commutative diagram 
\[
\begin{tikzcd}
  X' \arrow[r, "\beta"] \arrow[d, "f'"'] & X \arrow[d, "f"] \\
  Z' \arrow[r, "\alpha"] &  Z
 \end{tikzcd}
\]
where $\alpha$ and $\beta$ are birational morphisms and a $\mathbb R$-divisor $\overline B\ge 0$ on $X'$ such that 
$X'$ is $\mathbb Q$-factorial, $f'$ is equidimensional, $(X'/Z',\overline B)$ satisfies Property $(*)$,  
and if 
\[
K_{X'}+B' = \beta^*(K_X+B),
\] 
then $\overline B\ge B'$ and the horizontal part of $\overline B-B'$ is $\beta$-exceptional. 
Since $(X/Z,B)$ is BP stable over $Z$, it follows that $\beta^*M_X=M_{X'}$ is the moduli part of $(X'/Z',B')$. In particular, it is enough to show that $M_{X'}$ is nef. 
Let $\Sigma_{Z'}$ be the discriminant of $(X'/Z',\overline B)$, let $B'_{Z'}$ be the discriminant of $(X'/Z',B')$  and let $B''\coloneqq B'+f'^*( \Sigma_{Z'}-B'_{Z'})$. Then 
$(X'/Z',B'')$ satisfies Property $(*)$, $K_{X'}+B''\sim_{f',\mathbb R} 0$, the discriminant of $(X'/Z',B'')$ coincides with $\Sigma_{Z'}$ and its moduli part is equal to $M_{X'}$.

Let $A$ be an ample divisor on $X$ and let $t>0$. Since $(X,B)$ is dlt, there exist a $\mathbb R$-divisor $D\ge 0$ and an ample $\mathbb Q$-divisor $H\ge 0$ on $X$ such that $(X,D+H)$ is klt and $B+tA\sim_{\mathbb R}D+H$.  In particular, we may run a $(K_{X'}+\overline B)$-MMP over $Z'$ with scaling of $\beta^*A$. Note that, since $\overline B-B''$ is horizontal with respect to $f'$ and $\beta$-exceptional and since $K_{X'}+B''\sim_{f',\mathbb R} 0$, it follows that after finitely many steps of this MMP, we obtain a birational map $\phi\colon X'\dashrightarrow Y$ over $Z'$ which contracts the support of $\overline B-B''$. 
In particular, if  $C\coloneqq \phi_*\overline B$, then we also have that $C=\phi_*B''$.  Let $g\colon Y\to Z'$ be the induced morphism. Then Proposition \ref{prop_MMP_preserves_*} implies that $(Y/Z',C)$ satisfies Property $(*)$ and that $g$ is equidimensional.  Let $\cal G$ be the foliation induced by $g$ and let $\Delta$ be the horizontal part of $C$. Then, as in the proof of Theorem \ref{thm_main}, it follows that $(\cal G,\Delta)$ is log canonical. Thus, Theorem \ref{t_*BP} implies that $(Y/Z',C)$ is BP stable over $Z'$ and Proposition \ref{p_main} implies that the moduli part $M_Y$ of $(Y/Z', C)$ is nef. Thus, Proposition \ref{p_maximal} and Remark \ref{r_maximal} imply that $M_{X'}$ is nef, as claimed.  
\end{proof}


%

\subsection{A counterexample to the semi-ampleness of the moduli part}
\label{s_counterexample_semi_ample}
In contrast to the $f$-trivial case, in general we cannot hope for the semi-ampleness
of the moduli part, as the following examples show.  These examples also show that the 
abundance conjecture does not hold for algebraically integrable foliations. 

Both of the following examples are adapted from \cite[Section 3]{Keel99}.

\begin{example}
Let $C$ be a curve of genus $g\geq 2$, let $X = C \times C$, let $\Delta$ denote the diagonal and let $\pi\colon X \rightarrow C$
be the projection onto the first coordinate.
Then $(X/Z,\Delta)$ satisfies Property $(*)$ with discriminant $B_C=0$. Thus, the moduli part of $(X/Z,\Delta)$ is $M_X=K_{X/C}+\Delta$. 
But \cite[Theorem 3]{Keel99} implies that $M_X$ is big and nef, but not semi-ample. Clearly, in this case, the pair  $(X,\Delta)$ is log canonical but not klt. 
\end{example}

\begin{example}
We continue to use the same notation as in the above example, Let $E$ be an elliptic curve and let  $P \in E$ be a closed point.  Let $Y = C \times E$ and let $S$ be the family of curves obtained
by gluing $Y$ to $X$ along the curves $C \times \{P\}$ and $\Delta$.  This gives a family of stable curves $S \rightarrow C$ 
which corresponds
to a morphism $C \rightarrow \overline{\cal M}_{g}$.  Let $\cal U \rightarrow \overline{\cal M}_g$ denote the universal family.  

We may find a projective surface $T$ and a generically finite morphism $f\colon T \rightarrow \overline{\cal M}_g$ so that $f(T)$ contains the image $\overline C$ of $C$.
Let $Z = T \times_{\overline{\cal M}_g} \cal U$ and observe that $Z$ is a projective threefold.
Let $p\colon Z \rightarrow T$ be the induced morphism.  Observe that $p$ is semi-stable and that $K_Z$ is relatively nef over $Z$. Thus, if $B=0$ then $(Z/T,B)$ satisfies the Property $(*)$ with discriminant $B_T=0$ and, by Theorem \ref{t_*BP}, it is BP stable over $Z$. 
It follows that $K_{Z/T}$ is the moduli part of
$(Z/T, B)$. 
Let $C'$ be an irreducible component of $f^{-1}(\overline C)$ and let $S' = p^{-1}(C')$. Observe that $S' = X' \cup Y'$ where 
$\sigma\colon X' \rightarrow X$ is 
finite and
ramified only along fibres of $X' \rightarrow C'$
It follows that $K_{Z/T}\vert_{X'} = K_{X'/C'}+\Delta' = \sigma^*(K_{X/C}+\Delta)$ where $\Delta'$ is the graph of $C' \rightarrow C$.  
In particular, $K_{Z/T}$ is not semi-ample.
\end{example}

%
%

One might instead ask if something weaker holds true, namely, if $\nu(K_{\cal F}) = \kappa(K_{\cal F})$ 
for any algebraically integrable foliation $\cal F$ with canonical singularities and such that $K_{\cal F}$ is nef.  In light of Section \ref{s_bnefness}
any such counter example must have  $1 \leq \nu(K_{\cal F}) \leq \dim X -1$.

%
%
%
%

\bibliography{math.bib}
\bibliographystyle{alpha}

\end{document}